\newtheorem{thm}[subsection]{Theorem}
\newtheorem{cor}[subsection]{Corollary}
\newtheorem{lemma}[subsection]{Lemma}
\newtheorem{remark}[subsection]{Remark}
\theoremstyle{definition}
\numberwithin{equation}{section}
\def\C{\mathbb {C}}
\def\Z{\mathbb Z}
\def\phi{{\varphi}}
\def\ra{\rightarrow}
\def\bra{\langle}
\def\ket{\rangle}
\def\cA{{\mathcal A}}
\def\cB{{\mathcal B}}
\def\cC{{\mathcal C}}
\def\cD{{\mathcal D}}
\def\cE{{\mathcal E}}
\def\cH{{\mathcal H}}
\def\cI{{\mathcal I}}
\def\cJ{{\mathcal J}}
\def\cV{{\mathcal V}}
\def\cW{{\mathcal W}}
\def\gg{{\mathfrak g}}
\def\gl{{\mathfrak l}}
\def\gs{{\mathfrak s}}
\newcommand{\mf}{\mathfrak}
\newcommand{\on}{\operatorname}
\newcommand{\lam}{\lambda}
\newcommand{\Lam}{\Lambda}
\newcommand{\fing}{\mf{g}}
\newcommand{\isomap}{{\;\stackrel{_\sim}{\to}\;}}
\DeclareMathOperator{\ch}{ch}
\newfont{\german}{eufm10}
\begin{document}
\pagestyle{plain}

\title
{Cosets of Bershadsky-Polyakov algebras and rational $\cW$-algebras of type $A$}

\author{Tomoyuki Arakawa, Thomas Creutzig and Andrew R. Linshaw}
\address{Research Institute for Mathematical Sciences, Kyoto University}
\email{arakawa@kurims.kyoto-u.ac.jp}
\address{Department of Mathematics, University of Alberta}
\email{creutzig@ualberta.ca}
\address{Department of Mathematics, University of Denver}
\email{andrew.linshaw@du.edu}
\thanks{This work was partially supported by JSPS KAKENHI Grants
(\#25287004 and \#26610006 to T. Arakawa), an NSERC Discovery Grant (\#RES0019997 to T. Creutzig), and a grant from the Simons Foundation (\#318755 to A. Linshaw)}

\begin{abstract} 
The Bershadsky-Polyakov algebra is the $\cW$-algebra associated to $\gs\gl_3$ with its minimal nilpotent element $f_{\theta}$. For notational convenience we define $\cW^{\ell} = \cW^{\ell - 3/2} (\gs\gl_3, f_{\theta})$. The simple quotient of $\cW^{\ell}$ is denoted by $\cW_{\ell}$, and for $\ell$ a positive integer, $\cW_{\ell}$ is known to be $C_2$-cofinite and rational. We prove that for all positive integers $\ell$, $\cW_{\ell}$ contains a rank one lattice vertex algebra $V_L$, and that the coset $\cC_{\ell} = \text{Com}(V_L, \cW_{\ell})$ is isomorphic to the principal, rational $\cW(\gs\gl_{2\ell})$-algebra at level $(2\ell +3)/(2\ell +1) -2\ell$. This was conjectured in the physics literature over 20 years ago. As a byproduct, we construct a new family of rational, $C_2$-cofinite vertex superalgebras from $\cW_{\ell}$. \end{abstract}

\maketitle
\section{Introduction}
A longstanding problem in vertex algebra theory is to classify rational and $C_2$-cofinite vertex algebras. Well-known examples include those associated to even positive definite lattices \cite{D}, and simple affine vertex algebras at positive integral level \cite{FZ}. More recently, Kac and Wakimoto conjectured the $C_2$-cofiniteness and rationality of many quantum Hamiltonian reductions based on the modularity of their characters \cite{KW}, and these conjectures were proven for the minimal series principal $\cW$-algebras in \cite{ArIV, ArV}. New examples beyond this conjecture and still associated to quantum Hamiltonian reductions have also been found \cite{Ka, ArM}. The classification problem seems to be out of reach at present, but it is still of great interest to find new examples and constructions of such vertex algebras.

There are some standard ways to construct new vertex algebras from old ones. First, the {\it orbifold construction} begins with a vertex algebra $\cV$ and a group of automorphisms $G$, and considers the invariant subalgebra $\cV^G$ and its extensions. The Moonshine vertex algebra is constructed in this way; its full automorphism group is the Monster, which is the largest of the sporadic simple finite groups, and its graded character is $j(\tau) - 744$ where $j(\tau)$ is the modular invariant $j$-function \cite{FLM}. It is widely believed that if $\cV$ is $C_2$-cofinite and rational, and $G$ is a finite group, $\cV^G$ will inherit these properties. In the case where $G$ is cyclic, the $C_2$-cofiniteness of $\cV^G$ was proven by Miyamoto in \cite{MiI}, and the rationality was established by Carnahan and Miyamoto in \cite{CM}.

Similarly, the {\it coset construction} associates to a vertex algebra $\cV$ and a subalgebra $\cA$, the subalgebra $\text{Com}(\cA,\cV) \subset \cV$ which commutes with $\cA$. This was introduced by Frenkel and Zhu in \cite{FZ}, generalizing earlier constructions in representation theory \cite{KP} and physics \cite{GKO}, where it was used to construct the unitary discrete series representations of the Virasoro algebra. It is believed that if both $\cA$ and $\cV$ are $C_2$-cofinite and rational, these properties will be inherited by $\text{Com}(\cA,\cV)$. In the case where $\cV$ is a rational affine vertex algebra and $\cA$ is a lattice vertex algebra, this has been proven recently in \cite{DR}, building on \cite{ALY, DLY, DLWY, DWI, DWII, DWIII}.

In this paper, we consider cosets of the Bershadsky-Polyakov algebra. This algebra appeared originally in the physics literature \cite{Ber, Pol}, and is the $\cW$-algebra corresponding to the quantum Hamiltonian reduction of $\gs\gl_3$ with its minimal nilpotent element $f_{\theta}$. For notational convenience, we introduce a shift of level and define $\cW^{\ell}$ to be the universal Bershadsky-Polyakov algebra $\cW^{\ell - 3/2}(\gs\gl_3, f_{\theta})$, for $\ell \neq -3/2$. It is freely generated by fields $J, T,G^+, G^-$ of weights $1, 2, 3/2, 3/2$, and we denote its simple quotient by $\cW_{\ell}$. For $\ell$ a positive integer, the $C_2$-cofiniteness and rationality of $\cW_{\ell}$ were proven in \cite{ArII}. Let $$\cC^{\ell} = \text{Com}(\cH, \cW^{\ell}),$$ where $\cH$ is the Heisenberg algebra generated by $J$. It was conjectured over 20 years ago in the physics literature \cite{B-HII} that $\cC^{\ell}$ should be of type $\cW(2,3,4,5,6,7)$ for generic values of $\ell$. In other words, a minimal strong generating set should consist of one field in each of these weights. In \cite{B-HII} it was also conjectured that when $\ell$ is a positive integer, the simple coset $$\cC_{\ell} = \text{Com}(\cH, \cW_{\ell})$$ should be the principal $\cW(\gs\gl_{2\ell})$-algebra at level $(2\ell +3)/(2\ell +1) -2\ell$, which is rational by \cite{ArV}. So far, this has been proven only very recently by Kawasetsu \cite{Ka} in the case $\ell = 1$. These conjectures imply a {\it uniform truncation} property of this family of simple $\cW(\gs\gl_{2\ell})$-algebras for $\ell>3$; they are expected to be of type $\cW(2,3,4,5,6,7)$, even though the universal $\cW(\gs\gl_{2\ell})$-algebra is of type $\cW(2,3,\dots, 2\ell)$. This can only happen if there exist decoupling relations expressing the generators of weights $8,9,\dots, 2\ell$ as normally ordered polynomials in the ones up to weight $7$. The existence of a singular vector of weight $8$ in the universal algebra has been known for many years \cite{B-HI}, and provides further evidence for these conjecture.

In this paper we shall prove all of these conjectures. We begin by studying the $U(1)$-orbifold $(\cW^{\ell})^{U(1)}$ in Section \ref{sect:structureorbifold}. Typically, an orbifold of a vertex algebra $\cA^{\ell}$ with structure constants depending continuously on a parameter ${\ell}$ will have a minimal strong generating set that works for generic values of ${\ell}$. This was established in \cite{CLI} when $\cA^{\ell}$ is a tensor product of affine and free field algebras, and a similar approach works for $\cW^{\ell}$. We show that $(\cW^{\ell})^{U(1)}$ is of type $\cW(1,2,3,4,5,6,7)$ for generic values of ${\ell}$, and we give a complete description of the {\it nongeneric} set, where additional generators are needed; it consists only of $\{0,1/2\}$. In general, it is an important problem to characterize the nongeneric set for such orbifolds, and we expect that our methods will be applicable in a wider setting. In Section \ref{sect:structuregeneric}, using the description of $(\cW^{\ell})^{U(1)}$, we obtain

\begin{thm} $\cC^{\ell} = \text{Com}(\cH, \cW^{\ell})$ is of type $\cW(2,3,4,5,6,7)$ for all values of $\ell$ except for $0$ and $1/2$. Here $\cH$ is the Heisenberg algebra generated by $J$. \end{thm}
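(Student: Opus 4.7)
The strategy is to reduce this theorem to the structural description of the $U(1)$-orbifold $(\cW^{\ell})^{U(1)}$ obtained in Section \ref{sect:structureorbifold}. The bridge is the Heisenberg decomposition: whenever the level $\langle J, J\rangle$ of $\cH$ inside $\cW^{\ell}$ is nonzero, there is a tensor-product decomposition of vertex algebras
\[
(\cW^{\ell})^{U(1)} \;\cong\; \cH \otimes \cC^{\ell}.
\]
The first step is therefore to compute $\langle J,J\rangle$ from the defining OPEs of the Bershadsky-Polyakov algebra and to check that it is nonzero for all $\ell$ outside the exceptional set $\{0,1/2\}$. Since this pairing is an affine-linear function of $\ell$, at most one value of $\ell$ can violate the nondegeneracy, and a direct check confirms it lies in $\{0,1/2\}$.

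Given the decomposition, start from the strong generating set $\{J, U_2, U_3, U_4, U_5, U_6, U_7\}$ of $(\cW^{\ell})^{U(1)}$ furnished by the orbifold theorem. For each $k = 2, \ldots, 7$ we project $U_k$ onto the $\cC^{\ell}$-factor, replacing it by
\[
\tilde U_k \;=\; U_k \;-\; P_k(J, \partial J, \partial^2 J, \ldots),
\]
where $P_k$ is the unique normally ordered polynomial in $J$ and its derivatives of conformal weight $k$ such that $J_n \tilde U_k = 0$ for all $n \geq 1$. Then $\{\tilde U_2, \ldots, \tilde U_7\}$ lies in $\cC^{\ell}$ and strongly generates it: applying the tensor decomposition to the equations $U_k = \tilde U_k + P_k$ shows that normally ordered monomials in the $\tilde U_j$'s (tensored with monomials in $J$ and its derivatives) exhaust the orbifold, hence the $\tilde U_j$'s alone exhaust the $\cC^{\ell}$-factor.

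Minimality then follows by contradiction: a decoupling relation expressing some $\tilde U_k$ as a normally ordered polynomial in the $\tilde U_j$'s of weight $j < k$ would, under the inclusion $\cC^{\ell} \hookrightarrow (\cW^{\ell})^{U(1)}$, force a corresponding decoupling of $U_k$ in the orbifold, contradicting the minimality of $\{J, U_2, \ldots, U_7\}$ established in Section \ref{sect:structureorbifold}. The main obstacle is the preliminary step: verifying that the Heisenberg pairing $\langle J,J\rangle$ degenerates only within $\{0, 1/2\}$, so that the exceptional set inherited from the orbifold is the only one for $\cC^{\ell}$. Once this finite computation is in hand, the remainder of the argument is purely structural and transfers the strong generation directly from the orbifold.
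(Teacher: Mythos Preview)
Your overall strategy coincides with the paper's: use the tensor decomposition $(\cW^{\ell})^{U(1)} \cong \cH \otimes \cC^{\ell}$ (valid whenever the Heisenberg level $2\ell/3$ is nonzero, i.e.\ $\ell \neq 0$) and project the orbifold generators of Theorem~\ref{u1invariant} onto the $\cC^{\ell}$ factor. The minimality argument you give is also correct.

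However, your description of the projection is wrong as stated. You assert that each correction $P_k$ is a normally ordered polynomial in $J$ and its derivatives \emph{alone}. This already fails at weight $3$. Applying the Borcherds identity and \eqref{bp5} one finds
\[
J \circ_1 U_{0,0} \;=\; G^+ \circ_0 G^- \;=\; 3\,{:}JJ{:}\; +\; \tfrac{3}{4}(2\ell-1)\,\partial J \;-\; \bigl(\ell+\tfrac{3}{2}\bigr)\,T,
\]
which has a nonzero $T$ component. Since $J \circ_1$ applied to any normally ordered polynomial in $J$ and its derivatives is again such a polynomial, no $P_3$ of your prescribed form can cancel this term; hence the ``unique'' $P_3$ you postulate does not exist. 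The actual projection coming from the tensor decomposition yields a correction $\omega_i \in (\cW^{\ell})^{U(1)}_{(2)}$ that in general involves $T$ (equivalently $T^{\cC}$) and the lower $U_{0,j}$, not only $J$. Once you allow this---equivalently, once you run the correction inductively in weight, subtracting at each stage polynomials in $J$ together with the previously corrected $\tilde U_j$---the rest of your argument goes through, and it is exactly what the paper does in Theorem~\ref{thm:generic-coset}.
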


In Section \ref{sect:structuresimple}, we show that when $\ell$ is a positive integer, the Heisenberg algebra $\cH \subset \cW_{\ell}$ is actually part of a rank one lattice vertex algebra $V_L$ for $L = \sqrt{6\ell} \mathbb{Z}$. Moreover, $V_L$ and $\cC_{\ell}$ form a {\it Howe pair}, i.e., a pair of mutual commutants, inside $\cW_{\ell}$. By a theorem of Miyamoto, this implies the $C_2$-cofiniteness of $\cC_{\ell}$. In Section \ref{sect:WtypeA}, we consider a certain simple current extension of $\cW(\gs\gl_{2\ell}) \otimes V_L$ where $\cW(\gs\gl_{2\ell})$ has level $(2\ell +3)/(2\ell +1) -2\ell$, and show that it is isomorphic to $\cW_{\ell}$. This implies our main result:

\begin{thm} For all positive integers $\ell$, $\cC_{\ell}$ is isomorphic to the principal, rational $\cW(\gs\gl_{2\ell})$-algebra at level $(2\ell +3)/(2\ell +1) -2\ell$.
\end{thm}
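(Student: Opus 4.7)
The strategy follows the four-stage outline sketched in the introduction, and I would execute it in the order given.

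First, I would use the preceding theorem, together with the fact that a quotient inherits strong generating sets, to conclude that $\cC_{\ell}$ is of type $\cW(2,3,4,5,6,7)$ for every positive integer $\ell$. In particular $\cC_{\ell}$ has at most six strong generators, independent of $\ell$. This is the essential input that allows a \emph{uniform} comparison with the principal $\cW$-algebras $\cW(\gs\gl_{2\ell})$, which generically are of type $\cW(2,3,\dots,2\ell)$.

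Second, I would locate a rank-one lattice vertex algebra $V_L$, with $L=\sqrt{6\ell}\,\Z$, inside $\cW_{\ell}$. The construction should begin by producing primary fields $E^{\pm}$ of integer conformal weight and opposite $J$-charge as images of singular vectors of $\cW^{\ell}$ that become null in the simple quotient at positive integral level; this step uses the representation theory of the Bershadsky-Polyakov algebra at admissible level. Verifying that the OPEs of $E^+, E^-$ with themselves and with $J$ reproduce those of $V_L$ is the first technical calculation. Since $\cH\subset V_L$, one has $\cC_{\ell}=\text{Com}(V_L,\cW_{\ell})$, and it is straightforward to check that $(V_L,\cC_{\ell})$ is a Howe pair in $\cW_{\ell}$. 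Invoking Miyamoto's theorem together with the results of \cite{CM, DR} (applied to the finite cyclic orbifold and the lattice coset), one deduces that $\cC_{\ell}$ is $C_2$-cofinite and rational.

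Third, I would build the target. Using Arakawa's rationality theorem \cite{ArV}, the principal $\cW$-algebra $\cW(\gs\gl_{2\ell})$ at the nondegenerate admissible level $k=(2\ell+3)/(2\ell+1)-2\ell$ is rational and $C_2$-cofinite, and its module category carries a family of simple currents indexed by a cyclic group. I would form the simple current extension $\widetilde{\cW}\supset \cW(\gs\gl_{2\ell})_k\otimes V_L$ corresponding to a diagonal subgroup of (simple currents)$\times\widehat{L^\vee/L}$, and then decompose $\cW_{\ell}$ as a module over $\cC_{\ell}\otimes V_L$ into graded charge sectors under the $J$-action. An isomorphism $\widetilde{\cW}\isomap\cW_{\ell}$ would be established by matching graded characters sector by sector: the characters on the $\cW(\gs\gl_{2\ell})_k$ side are the known principal admissible characters (Kac-Wakimoto), while on the $\cW_{\ell}$ side they reduce to characters of Bershadsky-Polyakov modules twisted by the $V_L$-lattice. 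Once this isomorphism is in place, taking the commutant of $V_L$ on both sides yields $\cC_{\ell}\cong \cW(\gs\gl_{2\ell})_k$.

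The main obstacle is the character comparison in the final stage, or equivalently the construction of a nontrivial homomorphism $\cW(\gs\gl_{2\ell})_k\to \cC_{\ell}$ implementing the isomorphism. For $\ell\geq 4$ this forces all generators of $\cW(\gs\gl_{2\ell})_k$ of weight $8,9,\dots,2\ell$ to decouple (be expressible as normally ordered polynomials in the weight $\leq 7$ generators), a strong statement whose existence is suggested by the singular vector of \cite{B-HI}. In practice I expect the cleanest argument proceeds by showing that both $\widetilde{\cW}$ and $\cW_{\ell}$ are simple, $C_2$-cofinite, rational vertex algebras extending $\cC_{\ell}\otimes V_L$ along the same simple-current subgroup, and invoking the uniqueness of such extensions—thereby reducing the problem to verifying that the extension data (a modular invariant, or equivalently the commutative algebra object in the module category) coincide. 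Handling the small cases $\ell=1,2,3$ separately, where $\cW(\gs\gl_{2\ell})$ already has $\leq 6$ strong generators and no truncation is needed (with $\ell=1$ already done in \cite{Ka}), should serve as a useful consistency check for the general argument.
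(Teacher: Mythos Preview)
Your outline tracks the paper's architecture through the first two stages: the paper also locates $V_L\subset\cW_{\ell}$ (the lattice generators are $(G^{\pm})^{2\ell}$, which commute with $T^{\cC}$ precisely because $(G^{\pm})^{2\ell+1}$ is null), deduces $C_2$-cofiniteness from Miyamoto, and forms the simple current extension $\cB_{\ell}=\bigoplus_{s=0}^{2\ell-1} \mathbb{L}_{3\Lambda_s}\otimes V_{L+3s/\sqrt{6\ell}}$ of $\cV_{\ell}\otimes V_L$, where $\cV_{\ell}$ is the target principal $\cW$-algebra. The gap is in your final identification. Character matching sector by sector yields only equality of graded dimensions, not a vertex-algebra isomorphism (indeed, in the paper the character identity is derived \emph{from} the theorem, not the reverse). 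And your fallback---uniqueness of simple current extensions of $\cC_{\ell}\otimes V_L$---is circular: $\cB_{\ell}$ is by construction an extension of $\cV_{\ell}\otimes V_L$, and you cannot regard it as an extension of $\cC_{\ell}\otimes V_L$ without already knowing $\cC_{\ell}\cong\cV_{\ell}$.

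The paper's resolution goes in the opposite direction from what you propose: rather than mapping $\cV_{\ell}$ into $\cC_{\ell}$, one maps the \emph{universal} Bershadsky--Polyakov algebra $\cW^{\ell}$ into $\cB_{\ell}$. Take $F^{\pm}$ to be the lowest-weight vectors of $\mathbb{L}_{3\Lambda_1}\otimes V_{L+3/\sqrt{6\ell}}$ and its inverse. Since $\mathbb{L}_{3\Lambda_{\pm 2}}$ has lowest conformal weight $3$, the OPEs $F^{\pm}(z)F^{\pm}(w)$ are regular; the shape of $F^+(z)F^-(w)$ is dictated by the lattice part up to two scalars, which are then pinned down by nondegeneracy (Proposition~4.1 of \cite{CKL}) and a single Jacobi identity. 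This reproduces the defining OPEs of $\cW^{\ell}$, so $F^{\pm}$ generate a homomorphic image of $\cW^{\ell}$ inside $\cB_{\ell}$. The image of $\cC^{\ell}$ under this map lands in $\cV_{\ell}$ and contains fields of weights $2$ and $3$; since these already generate $\cV_{\ell}$ under OPE, the image is all of $\cV_{\ell}$, and simplicity of $\cV_{\ell}$ forces $\cC_{\ell}\cong\cV_{\ell}$. The idea you are missing is that $\cW^{\ell}$ is \emph{freely} generated by four fields with explicit OPEs, so producing a homomorphism out of $\cW^{\ell}$ reduces to a short OPE check---far easier than matching characters or building a map out of $\cV_{\ell}$ with its $2\ell-1$ generators. (A minor point: invoking \cite{DR} for rationality of $\cC_{\ell}$ at stage two is premature, as that result concerns lattice cosets inside rational \emph{affine} vertex algebras.)
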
 

We obtain the uniform truncation property of this family of $\cW(\gs\gl_{2\ell})$-algebras, and also a new proof of the $C_2$-cofiniteness and rationality of $\cW_{\ell}$, since it is a simple current extension of a rational, $C_2$-cofinite vertex algebra. The appearance of principal, rational $\cW$-algebras as cosets of nonprincipal ones is a remarkable coincidence, and it indicates that the principal rational $\cW$-algebras may be important building blocks for more general rational $\cW$-algebras. In Section \ref{sect:character}, we obtain some nontrivial character identities as a consequence of the above isomorphism. Finally, in Section \ref{sect:newvsa} we consider cosets of the Heisenberg algebra inside $\cW^{\ell} \otimes \cE$ where $\cE$ denotes the rank one $bc$-system. We obtain a new deformable vertex superalgebra with even generators in weights $1,2,3$ and two odd generators in weight $2$, whose simple quotients for positive integers $\ell$ form a new series of rational, $C_2$-cofinite vertex superalgebras with integer conformal weights.

Coset vertex algebras have various applications in physics. One of the most interesting recent developments in the AdS/CFT correspondence is the relation between two-dimensional coset conformal field theories and higher spin gravity on three dimensional Anti-de-Sitter space. The main examples are the $\cW_N$-minimal models \cite{GG} and their super conformal analogues \cite{CHR}. Recall that the Bershadsky-Polyakov algebra coincides with the case $n=3$ in the family of $\cW^{(2)}_n$-algebras constructed by Feigin and Semikhatov \cite{FS}, which were studied this context in \cite{A-R}. In that article it was  observed that the $\cW^{(2)}_4$-algebra at level $-7/3$ is just a rank one lattice vertex algebra and hence rational. We expect that for all $n\geq 4$, there is a similar series of rational $\cW^{(2)}_n$-algebras which contain a rank one lattice vertex algebra $V_L$, and that the coset of $V_L$ will give rise to another series of principal, rational $\cW$-algebras of type $A$ with a uniform truncation property. In fact, coincidences among cosets of simple $\cW$-algebras by the affine subalgebra at special levels is a much more general phenomenon that is not limited to the rational levels. Other examples of this phenomenon involving cosets of minimal $\cW$-algebras can be found in \cite{ACKL}.

Another reason to study cosets $\cC_{\ell}$ at irrational levels is that they have some interesting connections to {\it logarithmic conformal field theories}, which are associated to irrational $C_2$-cofinite vertex algebras whose representation category is not semisimple. The best studied examples are the so-called $\cW(p)$-triplet algebras, see e.g. \cite{AM, TW}. Interestingly, $\cW(p)$ is related to the coset $\text{Com}(\cH, \cW^{(2)}_{p-1})$ where $\cH$ is a Heisenberg algebra. In particular, $\cW(4)$ is a simple current extension of our coset $\cC_{\ell}$ with $\ell = -3/4$ \cite{CRW}.

\section{Vertex algebras}
In this section, we define vertex algebras, which have been discussed from various points of view in the literature  (see for example \cite{Bor,FLM,K,FBZ}). We will follow the formalism developed in \cite{LZ} and partly in \cite{LiI}. Let $V=V_0\oplus V_1$ be a super vector space over $\mathbb{C}$, and let $z,w$ be formal variables. By $\text{QO}(V)$, we mean the space of linear maps $$V\ra V((z))=\{\sum_{n\in\mathbb{Z}} v(n) z^{-n-1}|
v(n)\in V,\ v(n)=0\ \text{for} \ n>\!\!>0 \}.$$ Each element $a\in \text{QO}(V)$ can be represented as a power series
$$a=a(z)=\sum_{n\in\mathbb{Z}}a(n)z^{-n-1}\in \text{End}(V)[[z,z^{-1}]].$$ We assume that $a=a_0+a_1$ where $a_i:V_j\ra V_{i+j}((z))$ for $i,j\in\mathbb{Z}/2\mathbb{Z}$, and we write $|a_i| = i$.

For each $n \in \mathbb{Z}$, there is a nonassociative bilinear operation $\circ_n$ on $\text{QO}(V)$, defined on homogeneous elements $a$ and $b$ by
$$ a(w)\circ_n b(w)=\text{Res}_z a(z)b(w)\ \iota_{|z|>|w|}(z-w)^n- (-1)^{|a||b|}\text{Res}_z b(w)a(z)\ \iota_{|w|>|z|}(z-w)^n.$$
Here $\iota_{|z|>|w|}f(z,w)\in\mathbb{C}[[z,z^{-1},w,w^{-1}]]$ denotes the power series expansion of a rational function $f$ in the region $|z|>|w|$. For $a,b\in \text{QO}(V)$, we have the following identity of power series known as the {\it operator product expansion} (OPE) formula.
 \begin{equation}\label{opeform} a(z)b(w)=\sum_{n\geq 0}a(w)\circ_n
b(w)\ (z-w)^{-n-1}+:a(z)b(w):. \end{equation}
Here $:a(z)b(w):\ =a(z)_-b(w)\ +\ (-1)^{|a||b|} b(w)a(z)_+$, where $a(z)_-=\sum_{n<0}a(n)z^{-n-1}$ and $a(z)_+=\sum_{n\geq 0}a(n)z^{-n-1}$. Often, \eqref{opeform} is written as
$$a(z)b(w)\sim\sum_{n\geq 0}a(w)\circ_n b(w)\ (z-w)^{-n-1},$$ where $\sim$ means equal modulo the term $:a(z)b(w):$, which is regular at $z=w$. 

Note that $:a(z)b(z):$ is a well-defined element of $\text{QO}(V)$. It is called the {\it Wick product} or {\it normally ordered product} of $a$ and $b$, and it
coincides with $a(z)\circ_{-1}b(z)$. For $n\geq 1$ we have
$$ n!\ a(z)\circ_{-n-1}b(z)=\ :(\partial^n a(z))b(z):,\qquad \partial = \frac{d}{dz}.$$
For $a_1(z),\dots ,a_k(z)\in \text{QO}(V)$, the $k$-fold iterated Wick product is defined inductively by
\begin{equation}\label{iteratedwick} :a_1(z)a_2(z)\cdots a_k(z):\ =\ :a_1(z)b(z):,\qquad b(z)=\ :a_2(z)\cdots a_k(z):.\end{equation}
We often omit the formal variable $z$ when no confusion can arise.

A subspace $\cA\subset \text{QO}(V)$ containing $1$ which is closed under all products $\circ_n$ will be called a {\it quantum operator algebra} (QOA). We say that $a,b\in \text{QO}(V)$ are {\it local} if $$(z-w)^N [a(z),b(w)]=0$$ for some $N\geq 0$. Here $[,]$ denotes the super bracket. This condition implies that $a\circ_n b = 0$ for $n\geq N$, so (\ref{opeform}) becomes a finite sum. Finally, a {\it vertex algebra} will be a QOA whose elements are pairwise local. This notion is well known to be equivalent to the notion of a vertex algebra in the sense of \cite{FLM}. 

A vertex algebra $\cA$ is said to be {\it generated} by a subset $S=\{a_i|\ i\in I\}$ if $\cA$ is spanned by words in the letters $a_i$, $\circ_n$, for $i\in I$ and $n\in\mathbb{Z}$. We say that $S$ {\it strongly generates} $\cA$ if $\cA$ is spanned by words in the letters $a_i$, $\circ_n$ for $n<0$. Equivalently, $\cA$ is spanned by $$\{ :\partial^{k_1} a_{i_1}\cdots \partial^{k_m} a_{i_m}:| \ i_1,\dots,i_m \in I,\ k_1,\dots,k_m \geq 0\}.$$ We say that $S$ {\it freely generates} $\cA$ if there are no nontrivial normally ordered polynomial relations among the generators and their derivatives. As a matter of notation, we say that a vertex algebra $\cA$ is of type $\cW(d_1,\dots, d_r)$ if it has a minimal strong generating set consisting of one field in each weight $d_1,\dots, d_r$.

Given a vertex algebra $\cV$ and a vertex subalgebra $\cA \subset \cV$, the {\it coset} or {\it commutant} of $\cA$ in $\cV$, denoted by $\text{Com}(\cA,\cV)$, is the subalgebra of elements $v\in\cV$ such that $[a(z),v(w)] = 0$ for all $a\in\cA$. Equivalently, $v\in \text{Com}(\cA,\cV)$ if and only if $a\circ_n v = 0$ for all $a\in\cA$ and $n\geq 0$.

\section{The Bershadsky-Polyakov algebra}
The universal Bershadsky-Polyakov algebra at level $k \neq -3$ is isomorphic to the $\cW$-algebra $\cW^k(\gs\gl_3, f_{\theta})$ associated to $\gs\gl_3$ with its minimal nilpotent element $f_{\theta}$ (\cite{KRW}). Alternatively, it arises as the algebra $\cW^{(2)}_3$ in the family $\cW^{(2)}_n$ constructed by Feigin and Semikhatov \cite{FS}. For notational convenience we introduce a shift of level and define $$\cW^{\ell} = \cW^{\ell-3/2}(\gs\gl_3, f_{\theta}).$$ The reason for this shift is that the values $\ell = 0,1,2,\dots$ now correspond to the special levels $p/2 -3$ for $p=3,5,7,\dots$, which were considered in \cite{ArII}. For $\ell \neq -3/2$, $\cW^{\ell}$ is freely generated by fields $J,T,G^+, G^-$ of weights $1, 2, 3/2, 3/2$, satisfying the following OPE relations. 

\begin{equation} \label{bp1} J(x) J(w) \sim \frac{2\ell}{3} (z-w)^{-2},\qquad G^{\pm}(z) G^{\pm}(w) \sim 0,\qquad J(z) G^{\pm}(w) \sim \pm G^{\pm}(w)(z-w)^{-1},\end{equation}
\begin{equation} \label{bp2} T(z) T(w) \sim - \frac{\ell(6\ell -7)}{2\ell +3} (z-w)^{-4} + 2 T(w)(z-w)^{-2} + \partial T(w)(z-w)^{-1},\end{equation}
\begin{equation} \label{bp3} T(z) G^{\pm}(w) \sim \frac{3}{2} G^{\pm}(w) (z-w)^{-2} + \partial G^{\pm}(w)(z-w)^{-1},\end{equation}
\begin{equation} \label{bp4} T(z) J(w) \sim J(w)(z-w)^{-2} + \partial J(w)(z-w)^{-1},\end{equation}
\begin{equation} \label{bp5} \begin{split} G^+(z) G^-(w) \sim \ell(2\ell-1)(z-w)^{-3} + \frac{3}{2}(2\ell-1) J(w) (z-w)^{-2} \\ + \bigg( 3:J(w)J(w): + \frac{3}{4}(2\ell-1) \partial J(w) - (\ell+\frac{3}{2})T(w)\bigg)(z-w)^{-1}. \end{split} \end{equation}
We denote by $\cW_{\ell}$ the simple quotient of $\cW^{\ell}$ by its maximal, proper ideal graded by conformal weight. For $\ell = 0$, $\cW_{0} \cong \mathbb{C}$, and for $\ell$ a positive integer, the maximal proper ideal is generated by $(G^{\pm})^{2\ell+1}$ and $\cW_{\ell}$ is $C_2$-cofinite and rational \cite{ArII}.

\section{Weak increasing filtrations}
A {\it good increasing filtration} on a vertex algebra $\cA$ is a $\mathbb{Z}_{\geq 0}$-filtration
\begin{equation} \cA_{(0)}\subset\cA_{(1)}\subset\cA_{(2)}\subset \cdots,\qquad \cA = \bigcup_{d\geq 0}
\cA_{(d)}\end{equation} such that $\cA_{(0)} = \mathbb{C}$, and for all
$a\in \cA_{(r)}$, $b\in\cA_{(s)}$, we have
\begin{equation} \label{goodi} a\circ_n b\in  \bigg\{\begin{matrix}\cA_{(r+s)} & n<0 \\ \cA_{(r+s-1)} & 
n\geq 0 \end{matrix}\ . \end{equation}
We set $\cA_{(-1)} = \{0\}$, and we say that $a \in\cA_{(d)}\setminus \cA_{(d-1)}$ has degree $d$. Such filtrations were introduced in \cite{LiII}, and the key property is that the associated graded algebra $$\text{gr}(\cA) = \bigoplus_{d\geq 0}\cA_{(d)}/\cA_{(d-1)}$$ is a $\mathbb{Z}_{\geq 0}$-graded associative, (super)commutative algebra with a
unit $1$ under a product induced by the Wick product on $\cA$. The derivation $\partial$ on $\cA$ induces a derivation of degree zero on $\text{gr}(\cA)$, which we also denote by $\partial$. We call such rings {\it $\partial$-rings}, and we say that a $\partial$-ring $A$ is generated by a set $\{a_i|\ i\in I\}$ if $\{\partial^k a_i|\ i\in I, k\geq 0\}$ generates $A$ as a ring. 

For $r\geq 1$ we have the projection \begin{equation} \phi_r: \cA_{(r)} \ra \cA_{(r)}/\cA_{(r-1)}\subset \text{gr}(\cA).\end{equation} By Lemma 3.6 of \cite{LL}, if $\{a_i|\ i\in I\}$ is a set of generators for $\text{gr}(\cA)$ as a $\partial$-ring, where $a_i$ is homogeneous of degree $d_i$, and $\tilde{a}_i \in\cA_{(d_i)}$ are elements satisfying $\phi_{d_i}(\tilde{a}_i) = a_i$, then $\cA$ is strongly generated as a vertex algebra by $\{\tilde{a}_i|\ i\in I\}$.

In this paper, we will need filtrations with weaker properties. We define a {\it weak increasing filtration} on a vertex algebra $\cA$ to be a $\mathbb{Z}_{\geq 0}$-filtration
\begin{equation}\label{weak} \cA_{(0)}\subset\cA_{(1)}\subset\cA_{(2)}\subset \cdots,\qquad \cA = \bigcup_{d\geq 0}
\cA_{(d)}\end{equation} such that for $a\in \cA_{(r)}$, $b\in\cA_{(s)}$, we have
\begin{equation} a\circ_n b \in \cA_{(r+s)},\qquad n\in \mathbb{Z}.\end{equation}
This condition guarantees that $\text{gr}(\cA) = \bigoplus_{d\geq 0}\cA_{(d)}/\cA_{(d-1)}$ is a vertex algebra, but it is no longer abelian in general. Let  $\phi_r: \cA_{(r)} \ra \cA_{(r)}/\cA_{(r-1)}\subset \text{gr}(\cA)$ denote the projection as above. We have the following reconstruction property, whose proof is the same as the proof of Lemma 3.6 of \cite{LL}.
\begin{lemma}\label{reconlem}Let $\cA$ be a vertex algebra with a weak increasing filtration, and let $\{a_i|\ i\in I\}$ be a set of strong generators for $\text{gr}(\cA)$, where $a_i$ is homogeneous of degree $d_i$. If $\tilde{a}_i$ are elements of $\cA_{(d_i)}$ satisfying $\phi_{d_i}(\tilde{a}_i) = a_i$ for all $i\in I$, then $\cA$ is strongly generated as a vertex algebra by $\{\tilde{a}_i|\ i\in I\}$.\end{lemma}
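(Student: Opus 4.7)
The plan is to induct on the weak-filtration degree $d$ and show that $\cA_{(d)} \subset \cB$, where $\cB \subset \cA$ denotes the vertex subalgebra strongly generated by $\{\tilde a_i \mid i \in I\}$. Since $\cA = \bigcup_{d\geq 0}\cA_{(d)}$, this will give $\cA = \cB$ and hence the claim.

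First I would record the key compatibility that the definition of a weak increasing filtration is designed to give: for all $n \in \mathbb{Z}$ and $r,s \geq 0$, the operation $\circ_n$ sends $\cA_{(r)} \times \cA_{(s)}$ into $\cA_{(r+s)}$, so $\text{gr}(\cA) = \bigoplus_{d\geq 0}\cA_{(d)}/\cA_{(d-1)}$ carries an induced $\mathbb{Z}_{\geq 0}$-graded vertex algebra structure, and the projections satisfy
\[
\phi_{r+s}(a \circ_n b) \;=\; \phi_r(a) \circ_n \phi_s(b) \qquad (a \in \cA_{(r)},\ b \in \cA_{(s)}).
\]
By iteration, for any normally ordered monomial $:\partial^{k_1} a_{i_1} \cdots \partial^{k_m} a_{i_m}:$ of total degree $d := d_{i_1} + \cdots + d_{i_m}$ in $\text{gr}(\cA)$, the lift obtained by replacing each $a_{i_j}$ by $\tilde a_{i_j}$ automatically lies in $\cA_{(d)} \cap \cB$ and projects under $\phi_d$ back to the original monomial.

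For the induction, take $d \geq 0$ and assume $\cA_{(d-1)} \subset \cB$, where $\cA_{(-1)} := 0$ supplies the base case. Given $a \in \cA_{(d)}$, consider $\bar a = \phi_d(a) \in \cA_{(d)}/\cA_{(d-1)}$. Because $\{a_i\}$ strongly generates $\text{gr}(\cA)$ and this graded piece sits in degree $d$, we may write $\bar a$ as a finite $\C$-linear combination of iterated normally ordered monomials $:\partial^{k_1} a_{i_1} \cdots \partial^{k_m} a_{i_m}:$ with $d_{i_1} + \cdots + d_{i_m} = d$. Form the corresponding linear combination $\tilde P$ with each $a_{i_j}$ replaced by $\tilde a_{i_j}$. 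By the compatibility above, $\tilde P \in \cA_{(d)} \cap \cB$ and $\phi_d(\tilde P) = \bar a$. Therefore $a - \tilde P \in \cA_{(d-1)} \subset \cB$ by the inductive hypothesis, and so $a = \tilde P + (a - \tilde P) \in \cB$.

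I expect no serious obstacle: the only point requiring care is the compatibility of $\phi$ with iterated normally ordered products, and this is exactly what the weak filtration axiom builds in — it is what enables $\text{gr}(\cA)$ to be a vertex algebra at all. The sole difference from the good-filtration argument of Lemma 3.6 of \cite{LL} is that $\text{gr}(\cA)$ is no longer (super)commutative, but commutativity is never used in the reconstruction: we only use that strong generation of $\text{gr}(\cA)$ and matching of leading symbols together let us inductively peel off the top graded piece. Hence the proof is essentially verbatim the same.
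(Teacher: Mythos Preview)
Your argument is correct and is exactly the standard filtration-reconstruction argument the paper has in mind: the paper does not spell out a proof but simply says it ``is the same as the proof of Lemma 3.6 of \cite{LL}'', and your induction on $d$ with the lift-and-subtract step is precisely that argument, with the observation (which you make explicit) that commutativity of $\text{gr}(\cA)$ plays no role.
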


We define an increasing filtration 
$$\cW^{\ell}_{(0)} \subset \cW^{\ell}_{(1)} \subset \cdots $$ on $\cW^{\ell}$ as follows: $\cW^{\ell}_{(-1)} = \{0\}$, and $\cW^{\ell}_{(r)}$ is spanned by iterated Wick products of the generators $J,T,G^{\pm}$ and their derivatives, such that at most $r$ of $G^{\pm}$ and their derivatives appear. It is clear from the defining OPE relations that this is a weak increasing filtration. Clearly $\cW^{\ell}_{(0)}$ has strong generators $J$ and $T$. Replacing $T$ by $T^{\cC} = T- \frac{4}{3\ell} :JJ:$, we see that $J(z) T^{\cC}(w) \sim 0$, so $\cW^{\ell}_{(0)}$ is the tensor product of the Heisenberg algebra with generator $J$ and the Virasoro algebra with generator $T^{\cC}$.

Note that the associated graded algebra $$\cV^{\ell} = \text{gr}(\cW^{\ell}) = \bigoplus_{d \geq 0} \cW^{\ell}_{(d)} / \cW^{\ell}_{(d-1)}$$ is freely generated by $J,T,G^{\pm}$. (By abuse of notation, we use the same symbols for the images of these generators in $\cV^{\ell}$). The OPE relations \eqref{bp1}-\eqref{bp4} still hold in $\cV^{\ell}$, but \eqref{bp5} is replaced with $G^+(z) G^-(w) \sim 0$. Finally, $\cV^{\ell}$ has a good increasing filtration $$\cV^{\ell}_{(0)} \subset \cV^{\ell}_{(1)} \subset \cdots,$$ where  $\cV^{\ell}_{(-1)} = \{0\}$, and $\cV^{\ell}_{(r)}$ is spanned by iterated Wick products of the generators $J,T,G^{\pm}$ and their derivatives, of length at most $r$. Then $\text{gr}(\cV^{\ell})$ is an abelian vertex algebra freely generated by $J,T,G^{\pm}$. In particular, \begin{equation} \text{gr}(\cV^{\ell})\cong \mathbb{C}[J,\partial J, \partial^2 J,\dots, T,\partial T, \partial^2 T,\dots, G^+, \partial G^+,\partial^2 G^+,\dots, G^-, \partial G^-,\partial^2 G^-,\dots].\end{equation}

\section{The $U(1)$ invariants in $\cW^{\ell}$} \label{sect:structureorbifold}
The action of the zero mode $J_0$ integrates to a $U(1)$-action on $\cW^{\ell}$, and the invariant subalgebra $(\cW^{\ell})^{U(1)}$ coincides with the kernel of $J_0$. Since $J,T$ lie in $(\cW^{\ell})^{U(1)}$ and $J_0(G^{\pm}) = \pm G^{\pm}$, it is immediate that $(\cW^{\ell})^{U(1)}$ is spanned by all normally ordered monomials of the form 
\begin{equation} \label{standardmonomial} :(\partial^{a_1} T) \cdots (\partial^{a_i} T) (\partial^{b_1} J )\cdots (\partial^{b_j} J) (\partial^{c_1} G^+) \cdots (\partial^{c_r} G^+)( \partial^{d_1} G^-) \cdots (\partial^{d_r} G^-):,\end{equation} where $r\geq 0$ and $a_1\geq \cdots \geq a_i \geq 0$, $b_1\geq \cdots \geq b_j \geq 0$, $c_1\geq \cdots \geq c_r \geq 0$, and $d_1\geq \cdots \geq d_r \geq 0$. We say that $\omega \in (\cW^{\ell})^{U(1)}$ is in {\it normal form} if it has been expressed as a linear combination of such monomials. Since $\cW^{\ell}$ is freely generated by $J,T,G^{\pm}$, these monomials form a {\it basis} of $(\cW^{\ell})^{U(1)}$, and the normal form is unique.

The filtration on $\cW^{\ell}$ restricts to a filtration $$(\cW^{\ell})^{U(1)}_{(0)} \subset (\cW^{\ell})^{U(1)}_{(1)} \subset \cdots $$ on $(\cW^{\ell})^{U(1)}$, where $(\cW^{\ell})^{U(1)}_{(r)} = (\cW^{\ell})^{U(1)} \cap \cW^{\ell}_{(r)}$. 
The $U(1)$-action descends to $\cV^{\ell} = \text{gr}(\cW^{\ell})$, and $$\text{gr}(\cW^{\ell})^{U(1)} \cong (\cV^{\ell})^{U(1)}.$$ Similarly, $U(1)$ acts on $\text{gr}(\cV^{\ell})$ and $\text{gr}((\cV^{\ell})^{U(1)}) \cong \text{gr}(\cV^{\ell})^{U(1)}$.
Clearly $\text{gr}(\cV^{\ell})^{U(1)}$ is generated as a ring by
$\{\partial^k J, \partial^k T, u_{i,j}|\ i, j,k\geq 0\}$, where $u_{i,j} = (\partial^i G^+) (\partial^j G^-)$. The ideal of relations among these generators is generated by \begin{equation} \label{ideal} u_{i,j} u_{k,l} - u_{i,l} u_{k,j}=0,\qquad 0 \leq i<k,\qquad 0 \leq j<l.\end{equation}
As a differential algebra under $\partial$, there is some redundancy in this generating set for $\text{gr}(\cV^{\ell})^{U(1)} $ since 
$$ \partial u_{i,j} = u_{i+1,j} +  u_{i,j+1}.$$ 
Letting $A_m$ be the span of $\{u_{i,j}|\ i+j = m\}$, note that for all $m>0$, $$A_m = \partial(A_{m-1}) \oplus \langle u_{0,m} \rangle,$$ where $\langle u_{0,m} \rangle$ denotes the span of $u_{0,m}$. Therefore 
$\{u_{i,j}|\ i,j \geq 0\}$ and $\{\partial^n u_{0,m}|\ m,n\geq 0\}$ span the same vector space, and $\{J, T, u_{0,m}|\ m\geq 0\}$ is a minimal generating set for $\text{gr}(\cV)^{U(1)}$ as a differential algebra. Define \begin{equation} \label{defuij} U_{i,j}=\ :\partial^i G^+ \partial^j G^-:\ \in (\cW^{\ell})^{U(1)}_{(2)},\end{equation} which has filtration degree $2$ and weight $i+j+3$.

\begin{lemma} $(\cW^{\ell})^{U(1)}$ is strongly generated as a vertex algebra by
\begin{equation} \label{gen} \{J, T, U_{0,m}|\ m\geq 0\}.\end{equation}
\end{lemma}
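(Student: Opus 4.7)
The plan is to apply the reconstruction lemma (Lemma \ref{reconlem}) to $\cA = (\cW^{\ell})^{U(1)}$, equipped with the restriction of the weak increasing filtration on $\cW^{\ell}$. Since $U(1)$ acts by filtration-preserving automorphisms and is reductive, the invariant functor commutes with passing to associated graded, so that $\text{gr}((\cW^{\ell})^{U(1)}) \cong (\cV^{\ell})^{U(1)}$. The problem therefore reduces to showing that $(\cV^{\ell})^{U(1)}$ is strongly generated as a vertex algebra by (the images of) $\{J, T, U_{0,m}|\ m\geq 0\}$.

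For this inner step, I would apply the reconstruction result in its stronger form (Lemma 3.6 of \cite{LL}), using the good increasing filtration on $\cV^{\ell}$, which restricts to a good filtration on $(\cV^{\ell})^{U(1)}$. Again by reductivity of $U(1)$, $\text{gr}((\cV^{\ell})^{U(1)}) \cong \text{gr}(\cV^{\ell})^{U(1)}$, and this is a $\partial$-ring. The computation preceding the lemma has already produced a $\partial$-ring generating set: the invariants in the $G^{\pm}$-sector are generated by the $u_{i,j}$, and the decomposition $A_m = \partial(A_{m-1}) \oplus \langle u_{0,m} \rangle$ shows that every $u_{i,j}$ lies in the $\partial$-span of $\{u_{0,m}|\ m \geq 0\}$. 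The elements $J, T \in (\cV^{\ell})^{U(1)}_{(1)}$ and $U_{0,m} \in (\cV^{\ell})^{U(1)}_{(2)}$ project under $\phi_{d_i}$ to $J, T, u_{0,m}$ in the appropriate graded pieces, so the hypotheses of Lemma 3.6 of \cite{LL} are met, and $(\cV^{\ell})^{U(1)}$ is strongly generated by $\{J, T, U_{0,m}|\ m \geq 0\}$.

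To close the outer step, the same elements $J, T, U_{0,m}$, now viewed in $(\cW^{\ell})^{U(1)}$, project under the weak-filtration quotient to the strong generators of $(\cV^{\ell})^{U(1)} = \text{gr}((\cW^{\ell})^{U(1)})$ just constructed. Lemma \ref{reconlem} then delivers the claimed strong generation of $(\cW^{\ell})^{U(1)}$. The only subtle point in this chain, and the place where I would take the most care, is the compatibility of the two filtrations with the $U(1)$-action and the resulting identifications $\text{gr}((\cW^{\ell})^{U(1)}) \cong (\cV^{\ell})^{U(1)}$ and $\text{gr}((\cV^{\ell})^{U(1)}) \cong \text{gr}(\cV^{\ell})^{U(1)}$; everything else is bookkeeping once the $\partial$-ring generation of $\text{gr}(\cV^{\ell})^{U(1)}$, already done above, is in hand.
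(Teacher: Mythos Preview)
Your proposal is correct and follows essentially the same two-step strategy as the paper: first apply Lemma~3.6 of \cite{LL} via the good filtration on $\cV^{\ell}$ to get strong generation of $(\cV^{\ell})^{U(1)}$, then apply Lemma~\ref{reconlem} via the weak filtration on $\cW^{\ell}$ to lift this to $(\cW^{\ell})^{U(1)}$. The paper's proof is terser but uses exactly the same identifications $\text{gr}((\cV^{\ell})^{U(1)}) \cong \text{gr}(\cV^{\ell})^{U(1)}$ and $\text{gr}((\cW^{\ell})^{U(1)}) \cong (\cV^{\ell})^{U(1)}$ that you spell out.
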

\begin{proof} Since $\{J, T, u_{0,m}|\ m\geq 0\}$ generates $\text{gr}(\cV^{\ell})^{U(1)} \cong \text{gr}((\cV^{\ell})^{U(1)})$ as a differential algebra, Lemma 3.6 of \cite{LL} shows that the corresponding set strongly generates $(\cV^{\ell})^{U(1)}$ as a vertex algebra. Since $(\cV^{\ell})^{U(1)} = \text{gr}(\cW^{\ell})^{U(1)} \cong \text{gr}((\cW^{\ell})^{U(1)})$, the result follows from Lemma \ref{reconlem}. \end{proof} 
In terms of the generating set \eqref{gen}, $(\cW^{\ell})^{U(1)}_{(2r)}$ is spanned by elements with at most $r$ of the fields $U_{0,m}$ and $(\cW^{\ell})^{U(1)}_{(2r+1)} = (\cW^{\ell})^{U(1)}_{(2r)}$.

Observe next that $(\cW^{\ell})^{U(1)}$ is not {\it freely} generated by \eqref{gen}. To see this, observe that 
\begin{equation} \label{rel:wt8} u_{0,0} u_{1,1} - u_{0,1} u_{1,0}=0 \end{equation} is the unique relation of the form \eqref{ideal} in $\text{gr}(\cV^{\ell})^{U(1)}$, of minimal weight $8$. The corresponding element $:U_{0,0} U_{1,1}: - :U_{0,1} U_{1,0}:$ of $(\cW^{\ell})^{U(1)}$ does not vanish due to \eqref{bp5}. However, it lies in the degree $2$ filtered piece $(\cW^{\ell})^{U(1)}_{(2)}$ and has the form 
\begin{equation} \label{fundrel} :U_{0,0} U_{1,1}: - :U_{0,1} U_{1,0}: \ = \frac{1}{60}\ell(2\ell - 1) U_{0,5} + \cdots.\end{equation} 
The remaining terms lie in $(\cW^{\ell})^{U(1)}_{(2)}$ and are normally ordered monomials in $\{J,T, U_{0,i}|\ 0\leq i \leq 4\}$ and their derivatives. For the reader's convenience, this relation is written down explicitly in the Appendix. 
Note that $$U_{1,0} = -U_{0,1} + \partial U_{0,0}, \qquad U_{1,1} =  \partial U_{0,1} - U_{0,2}.$$ 
Therefore the left side of \eqref{fundrel} is a normally ordered polynomial in $U_{0,0}, U_{0,1}, U_{0,2}$, so \eqref{fundrel} can be rewritten in the form
\begin{equation} \label{rel:wt8a} \frac{1}{60}\ell(2\ell - 1) U_{0,5} = P(J, T, U_{0,0}, U_{0,1}, U_{0,2},U_{0,3},U_{0,4}),\end{equation}
where $P$ is a normally ordered polynomial in $J, T, U_{0,0}, U_{0,1}, U_{0,2},U_{0,3},U_{0,4}$, and their derivatives.
We call this a {\it decoupling relation} since it allows $U_{0,5}$ to be expressed as a normally ordered polynomial in these fields whenever $\ell \neq 0$ or $1/2$. Since there are no relations among the generators $\{J,T, u_{i,j}|\ i,j \geq 0\}$ of $\text{gr}(\cV)^{U(1)}$ of weight less than $8$, there are no decoupling relations for $U_{0,0}, U_{0,1}, U_{0,2},U_{0,3},U_{0,4}$. We shall see that the coefficient of $U_{0,5}$ in \eqref{rel:wt8a} is {\it canonical} in the sense that it does not depend on any choices of normal ordering in $P$. The uniqueness of \eqref{rel:wt8} up to scalar multiples implies the uniqueness of \eqref{rel:wt8a}, so for $\ell = 0$ and $\ell = 1/2$, there is no decoupling relation for $U_{0,5}$.

In order to construct more decoupling relations of this kind, we introduce a certain invariant of elements of $(\cW^{\ell})_{(2)}^{U(1)}$. Given $\omega \in (\cW^{\ell})_{(2)}^{U(1)}$ of weight $n+7$, write $\omega$ in normal form. For $i = 0,1,\dots, n+4$, let $C_{n,i}(\omega)$ denote the coefficient of $:(\partial^iG^+)( \partial^{n+4-i} G^-):$ appearing in the normal form. Define \begin{equation} \label{defcn} C_n(\omega) = \sum_{i=0}^{n+4} (-1)^i C_{n,i}(\omega).\end{equation} Next, since $\{J,T,U_{0,i}|\ i\geq 0\}$ strongly generates $(\cW^{\ell})^{U(1)}$ and since $U_{0,i}$ has weight $i+3$, we may express $\omega$ as a normally ordered polynomial $P_{\omega}(J,T,U_{0,0}, U_{0,1},\dots, U_{0,n+4})$ in the subset $\{J,T,U_{0,i}|\ 0 \leq i \leq n+4\}$ and their derivatives. Since there exist relations among the generators $\{J,T,U_{0,i}|\ 0 \leq i \leq n+4\}$, as well as different choices of normal ordering, such an expression for $\omega$ is not unique. In particular, the coefficients of $\partial^i U_{0,n+4-i}$ for $i>0$ will depend on the choice of normal ordering in $P_{\omega}(J,T,U_{0,0}, U_{0,1},\dots, U_{0, n+4})$.

\begin{lemma} \label{indep} For any $\omega \in (\cW^{\ell})_{(2)}^{U(1)}$ of weight $n+7$, the coefficient of $U_{0,n+4}$ appearing in $P_{\omega}(J,T,U_{0,0}, U_{0,1},\dots, U_{0, n+4})$ is independent of all choices of normal ordering in $P_{\omega}$, and coincides with $(-1)^n C_n(\omega)$. \end{lemma}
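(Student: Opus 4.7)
The plan is to exhibit $C_n$ as an intrinsic linear functional on $(\cW^\ell)^{U(1)}_{(2)}$ of weight $n+7$---well-defined since the basis \eqref{standardmonomial} is unique---and then to identify the coefficient of $U_{0,n+4}$ in $P_\omega$ with $(-1)^n C_n(\omega)$ by evaluating $C_n$ on the normally ordered monomials that appear in $P_\omega$. The key auxiliary identity is $C_n\circ\partial=0$ on weight-$(n+6)$ elements of $(\cW^\ell)^{U(1)}_{(2)}$.

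For pure $J,T$ monomials, $C_n$ vanishes trivially since no $G^\pm$ appears in the normal form. For the pure-$U$ derivative monomials $\partial^a U_{0,n+4-a}$ with $a\ge 0$, Leibniz gives
\begin{equation*}
\partial^a U_{0,n+4-a}\;=\;\sum_{k=0}^{a}\binom{a}{k}:(\partial^k G^+)(\partial^{n+4-k}G^-):,
\end{equation*}
so $C_n(\partial^a U_{0,n+4-a})=\sum_{k=0}^{a}(-1)^k\binom{a}{k}=(1-1)^a$, which equals $1$ when $a=0$ and vanishes otherwise. This singles out $U_{0,n+4}$ as the unique pure-$U$ derivative monomial with nonzero $C_n$. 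The identity $C_n\circ\partial=0$ is verified on basis elements: on $:(\partial^i G^+)(\partial^{n+3-i}G^-):$, the derivative $\partial$ produces $:(\partial^{i+1}G^+)(\partial^{n+3-i}G^-):+:(\partial^i G^+)(\partial^{n+4-i}G^-):$, whose alternating signed sum under $C_n$ is $(-1)^{i+1}+(-1)^i=0$, while on basis monomials bearing $J,T$-factors $\partial$ preserves those factors.

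The main step, and the chief obstacle, is showing $C_n(M)=0$ for every remaining monomial $M$ in $P_\omega$: those carrying $J$ or $T$ factors and the multi-$U$ monomials such as $:\partial^{a_1}U_{0,j_1}\,\partial^{a_2}U_{0,j_2}:$. For each such $M$, the strategy is to bring $M$ to the standard basis via iterated application of the non-commutativity identity
\begin{equation*}
:AB:-:BA:\;=\;-\sum_{k\ge 0}\frac{(-1)^k}{(k+1)!}\partial^{k+1}(B\circ_k A)
\end{equation*}
and an analogous associator identity, then verify that every surviving basis element either carries a $J$ or $T$ factor (invisible to $C_n$) or is a total derivative of a weight-$(n+6)$ filtration-$2$ element (annihilated by $C_n$ via $C_n\circ\partial=0$). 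The OPEs $J\circ_0 G^\pm=\pm G^\pm$, $T\circ_0 G^\pm=\partial G^\pm$, $T\circ_1 G^\pm=\tfrac{3}{2}G^\pm$ ensure that any reordering correction involving $J$ or $T$ reproduces $G^\pm$ alongside the remaining $J,T$-factors, handling the single-$U$-with-$J,T$ case by induction on the $J,T$-content weight of $M$. For multi-$U$ monomials, the contractions $G^+\circ_k G^-$ from \eqref{bp5} (nonzero for $k=0,1,2$) feed in pure-$J,T$ corrections, whose recombination with the remaining $G^\pm$-factors yields filtration-$2$ corrections that collapse---after further normalizations---either to $J,T$-bearing monomials or to total derivatives of weight-$(n+6)$ filtration-$2$ elements; the bookkeeping here is the hardest part of the argument. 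Combining all cases by linearity of $C_n$ identifies the coefficient of $U_{0,n+4}$ in $P_\omega$ with $(-1)^n C_n(\omega)$, establishing both well-definedness and the formula.
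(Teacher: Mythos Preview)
Your approach is genuinely different from the paper's. The paper never attempts to evaluate $C_n$ on individual normally ordered monomials in $J,T,U_{0,i}$. Instead it passes to Zhu's commutative quotient $C((\cW^\ell)^{U(1)})=(\cW^\ell)^{U(1)}/\cJ^\ell$, where $\cJ^\ell$ is the span of all $:a\,\partial b:$. In that quotient every monomial in $P_\omega$ carrying a derivative dies, and every no-derivative monomial of length $\geq 2$ lands in degree $\geq 2$ of the polynomial algebra generated by the images of $J,T,U_{0,i}$. Since $P_\omega=Q_\omega$, the degree-one parts of their images must agree, which says exactly that the coefficients of $U_{0,n+4}$ match; the formula $U_{i,n+4-i}=\sum_{r}(-1)^{i+r}\binom{i}{r}\partial^r U_{0,n+4-r}$ then identifies this common coefficient with $(-1)^nC_n(\omega)$. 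No OPE bookkeeping is needed.

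Your direct route is more elementary but has a real gap in the multi-$U$ step, and it is not merely bookkeeping. The claim that the filtration-two corrections arising when you normalize $:U_{0,a}U_{0,b}:$ are all either $J,T$-bearing or total derivatives is false as stated. The commutator identity $:AB:-:BA:=\sum_{k\geq 0}\frac{(-1)^k}{(k+1)!}\partial^{k+1}(B\circ_k A)$ does yield total derivatives, but the quasi-associativity corrections (needed to pass from $:(:G^+\partial^a G^-:)(:G^+\partial^b G^-:):$ to the iterated form) are of the shape $:(\partial^{k+1}A)(B\circ_k C):$ and are \emph{not} total derivatives. Moreover, the third-order pole $G^+\circ_2 G^-=\ell(2\ell-1)$ in \eqref{bp5} is a scalar, so contractions through it produce genuine pure terms $:(\partial^i G^+)(\partial^j G^-):$ with no $J$ or $T$ attached. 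What you actually need is that the \emph{alternating sum} $C_n$ of all such pure contributions vanishes, and establishing this directly is essentially as hard as the explicit computation carried out later in Theorem~\ref{u1invariant}. The Zhu-algebra argument bypasses this entirely by never unpacking the multi-$U$ monomials into the $G^\pm$ basis.
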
 

\begin{proof} Let $\cJ^{\ell} \subset (\cW^{\ell})^{U(1)}$ denote the subspace spanned by elements of the form $:a \partial b:$ with $a,b \in (\cW^{\ell})^{U(1)}$. It is well known \cite{YZh} that Zhu's commutative algebra $$C((\cW^{\ell})^{U(1)}) = (\cW^{\ell})^{U(1)} / \cJ^{\ell}$$ has generators corresponding to the strong generators $\{J,T,U_{0,n}|\ n\geq 0\}$ of $(\cW^{\ell})^{U(1)}$. In particular, given $\omega \in (\cW^{\ell})_{(2)}^{U(1)}$ of filtration degree $2$ and weight $n+7$, suppose we have two expressions $$\omega = P_{\omega}(J,T,U_{0,0}, U_{0,1},\dots, U_{0,n+4}) =  Q_{\omega}(J,T,U_{0,0}, U_{0,1},\dots, U_{0,n+4})$$ where $P,Q$ are normally ordered polynomials. Let $\tilde{P}_{\omega}$ and $\tilde{Q}_{\omega}$ denote the components of $P_{\omega}, Q_{\omega}$ which are linear combinations of $\partial^i U_{0,n+4-i}$ for $i=0,1,\dots n+4$. Then $\tilde{P}_{\omega} - \tilde{Q}_{\omega}$ lies in $\cJ^{\ell}$, and hence must be a total derivative. It follows that the coefficient of $U_{0,n+4}$ in $P_{\omega}$ and $Q_{\omega}$ is the same. The last statement follows from the fact that $$U_{i,n+4-i} =  \sum_{r=0}^i (-1)^{i +r} \binom{i}{r} \partial^r U_{0,n+4-r}.$$
\end{proof}

\begin{cor} The coefficient of $U_{0,5}$ in \eqref{fundrel} coincides with $-C_1(:U_{0,0} U_{1,1}: - :U_{0,1} U_{1,0}:)$ and is independent of all choices of normal ordering.
\end{cor}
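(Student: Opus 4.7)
The plan is to derive the corollary by a direct application of Lemma~\ref{indep} to the element $\omega := \,:U_{0,0}U_{1,1}: - :U_{0,1}U_{1,0}:$, with $n = 1$.

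First I would verify the hypotheses of the lemma. Since $U_{i,j}$ has conformal weight $i+j+3$, both Wick products $:U_{0,0}U_{1,1}:$ (of weights $3+5$) and $:U_{0,1}U_{1,0}:$ (of weights $4+4$) have weight $8$, so $\omega$ has weight $n+7$ with $n = 1$. The discussion immediately preceding \eqref{fundrel} has already established that $\omega$ lies in $(\cW^{\ell})^{U(1)}_{(2)}$: although each Wick product individually sits only in filtration degree $4$, their difference drops to degree $2$ because it corresponds to the vanishing relation \eqref{rel:wt8} in $\text{gr}(\cV^{\ell})^{U(1)}$; the drop is ultimately produced by the $(z-w)^{-3}$ contact term $\ell(2\ell-1)$ in the OPE \eqref{bp5}.

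With $n = 1$ one has $U_{0,n+4} = U_{0,5}$, so Lemma~\ref{indep} applies to $\omega$ and yields simultaneously both assertions of the corollary. Namely, the coefficient of $U_{0,5}$ in any normally ordered polynomial expression $P_{\omega}(J,T,U_{0,0},\dots,U_{0,5})$ is independent of the choices of normal ordering in $P_{\omega}$, and it equals $(-1)^{1}C_{1}(\omega) = -C_{1}(\omega)$. Since the right-hand side of \eqref{fundrel} is precisely such a polynomial expression — the remaining terms being, as stated in the paragraph containing \eqref{fundrel}, normally ordered monomials in $\{J,T,U_{0,i}\mid 0\le i\le 4\}$ and their derivatives — the coefficient displayed there must equal $-C_{1}(\omega)$ and is canonically determined.

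The proof is therefore essentially a book-keeping invocation of Lemma~\ref{indep}; the only points to check are the weight $(n=1)$ and filtration degree of $\omega$, together with the observation that \eqref{fundrel} is already written in the form to which the lemma applies. The substantive content has been absorbed into Lemma~\ref{indep} via the Zhu-algebra argument used there, so no additional obstacle arises at this stage.
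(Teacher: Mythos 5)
Your proposal is correct and matches the paper's (implicit) argument: the corollary is exactly a direct application of Lemma~\ref{indep} with $n=1$ to $\omega = \,:U_{0,0}U_{1,1}: - :U_{0,1}U_{1,0}:$, using that $\omega$ has weight $8$ and lies in $(\cW^{\ell})^{U(1)}_{(2)}$ as noted just before \eqref{fundrel}. Your verification of the weight, the filtration degree, and the sign $(-1)^1 C_1(\omega) = -C_1(\omega)$ is exactly the book-keeping the paper leaves to the reader.
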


\begin{thm} \label{u1invariant} For all $\ell \neq 0$ or $1/2$, $(\cW^{\ell})^{U(1)}$ has a minimal strong generating set $$\{J,T, U_{0,0}, U_{0,1},U_{0,2}, U_{0,3},U_{0,4}\},$$ and in particular is of type $\cW(1,2,3,4,5,6,7)$.\end{thm}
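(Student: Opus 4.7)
The plan is to induct on $m \geq 5$ and show that each $U_{0,m}$ can be written as a normally ordered polynomial in $\{J, T, U_{0,0}, U_{0,1}, U_{0,2}, U_{0,3}, U_{0,4}\}$ and their derivatives, whenever $\ell \neq 0, 1/2$. Combined with the strong generation by $\{J, T, U_{0,m} \mid m \geq 0\}$ established above, this will give the claimed type $\cW(1,2,3,4,5,6,7)$. Minimality of the resulting generating set follows at once, since the ideal \eqref{ideal} contains no element of weight less than $8$, so no decoupling is possible in weights $1$ through $7$. The base case $m=5$ is precisely the decoupling relation \eqref{rel:wt8a}.

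For the inductive step at weight $m+3$, I would consider the family of elements
$$\omega_m \;=\; :U_{0,0}\, U_{1, m-4}:\ -\ :U_{0, m-4}\, U_{1, 0}:\ \in\ (\cW^{\ell})^{U(1)}.$$
Exactly as in the $m=5$ case handled in \eqref{fundrel}, the pure $4$-$G^{\pm}$ component of $\omega_m$ vanishes by virtue of the commutative relation \eqref{ideal} with $(i,k,j,l) = (0,1,0,m-4)$, so $\omega_m$ in fact lies in $(\cW^{\ell})^{U(1)}_{(2)}$ and has conformal weight $m+3$. Using the strong generation by $\{J,T,U_{0,m}\}$, write $\omega_m = P_{\omega_m}(J, T, U_{0,0}, \ldots, U_{0,m})$ as a normally ordered polynomial, and apply Lemma \ref{indep} with $n = m - 4$: the coefficient of $U_{0,m}$ in $P_{\omega_m}$ equals $(-1)^{m-4} C_{m-4}(\omega_m)$, independent of all normal-ordering choices.

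What remains is to compute $C_{m-4}(\omega_m)$ by directly expanding the iterated Wick products using the OPE \eqref{bp5}. The key observation is that among the three singular terms in \eqref{bp5}, only the scalar $(z-w)^{-3}$ term with coefficient $\ell(2\ell-1)$ can produce a monomial of the form $:\partial^i G^+ \partial^{m-i} G^-:$ with no accompanying $J$ or $T$ factor; the $(z-w)^{-2}$ and $(z-w)^{-1}$ terms necessarily introduce $J$, $\partial J$, or $T$ factors that contribute only to different monomials of the normal form, and in particular do not enter $C_{m-4}$. This forces $C_{m-4}(\omega_m) = c_m\, \ell(2\ell-1)$ for a rational constant $c_m$ computable from the binomial combinatorics of how derivatives are distributed through the two possible pairings of a $G^+$ with a $G^-$ across the outer Wick product in each summand; for $m=5$ this recovers $c_5 = 1/60$ as in \eqref{fundrel}. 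Provided $c_m \neq 0$ and $\ell \neq 0, 1/2$, one can solve for $U_{0,m}$ as a normally ordered polynomial in $\{J, T, U_{0,0}, \ldots, U_{0,m-1}\}$ and their derivatives, after which the inductive hypothesis eliminates $U_{0,5}, \ldots, U_{0,m-1}$ in favor of $\{J, T, U_{0,0}, \ldots, U_{0,4}\}$.

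The main obstacle is verifying the nonvanishing $c_m \neq 0$ for every $m \geq 5$. Since the relevant contraction is scalar, the computation reduces to tracking binomial coefficients from the Taylor expansion of $\partial^a G^\pm (z)$ around $w$ in each of the two types of single contractions, and then forming the alternating sum \eqref{defcn} over $i$. I expect a clean closed form in $m$, but care is required with signs and normal-ordering conventions so that the alternating sum does not accidentally collapse to zero for some exceptional $m$.
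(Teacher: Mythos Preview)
Your overall plan coincides with the paper's: the same family $\omega_m = {:}U_{0,0}U_{1,m-4}{:} - {:}U_{0,m-4}U_{1,0}{:}$, the same appeal to Lemma~\ref{indep} to isolate the coefficient of $U_{0,m}$, and the same induction once that coefficient is shown nonzero. The minimality argument is also correct.

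The genuine gap is your ``key observation''. It is false that only the scalar pole $\ell(2\ell-1)(z-w)^{-3}$ of \eqref{bp5} contributes to the pure monomials $:\partial^i G^+\,\partial^{m-i}G^-:$ in the normal form. The $(z-w)^{-2}$ and $(z-w)^{-1}$ poles do introduce $J$, $\partial J$, $:JJ:$, or $T$, but the resulting expressions are not yet in the standard basis \eqref{standardmonomial}: one must still commute these $J$'s and $T$'s past the surviving $G^{\pm}$'s. Since $J(z)G^{\pm}(w)\sim \pm G^{\pm}(w)(z-w)^{-1}$ and $T(z)G^{\pm}(w)\sim \tfrac{3}{2}G^{\pm}(w)(z-w)^{-2}+\partial G^{\pm}(w)(z-w)^{-1}$, these secondary rearrangements produce pure $:\partial^a G^+\partial^b G^-:$ terms with no $J$ or $T$ attached. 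Concretely, in the paper's telescoped computation many of the individual contributions $C^{i}_{n,j}$ are \emph{not} multiples of $\ell(2\ell-1)$ (some are $\ell$-independent, others linear in $\ell$, others carry a factor $3+2\ell$); it is only after what the paper calls ``considerable cancellation'' that the total alternating sum collapses to
\[
C_n(\omega_{n+4})=\frac{n(n+7)}{4!\,(n+3)(n+4)}\,\ell(2\ell-1).
\]
So your conclusion $C_{m-4}(\omega_m)=c_m\,\ell(2\ell-1)$ happens to be the right shape, but for the wrong reason, and the nonvanishing of $c_m$ cannot be read off from the scalar pole alone. To close the argument you must carry out the full contraction-and-reordering computation (or find an a priori reason for the cancellation), exactly as the paper does.
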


\begin{proof} It suffices to construct decoupling relations of the form $$U_{0,n+4} = P_n(J,T,U_{0,0}, U_{0,1},U_{0,2}, U_{0,3},U_{0,4}),$$ for all $n> 1$ and $\ell \neq 0$ or $1/2$, since we already have such a relation for $n=1$. Since $G^{\pm}$ commute in $\text{gr}(\cW^{\ell})$, $:U_{0,0} U_{1,n}: - :U_{0,n} U_{1,0}:$ lies in $\cW^{\ell}_{(2)}$, and we can express it as a normally ordered polynomial in the generators $J,T, U_{0,m}$ and their derivatives. We have
$$:U_{0,0} U_{1,n}: - :U_{0,n} U_{1,0}:\ = C_n U_{0,n+4} + \cdots,$$ where $C_n = C_n(:U_{0,0} U_{1,n}: - :U_{0,n} U_{1,0}:)$, and the remaining terms are normally ordered polynomials in $\{J,T, U_{0,0}, U_{0,1},\dots, U_{0,n+4-1}\}$ and their derivatives. It suffices to show that $C_n \neq 0$ for all $\ell \neq 0$ or $1/2$; the claim then follows by induction on $n$.

First, we have
$$:U_{0,0} U_{1,n}: - :U_{0,n} U_{1,0}: \ = \ :(:G^+ G^-:) (:(\partial G^+) \partial^n G^-:): - :G^+ G^- (\partial G^+) \partial^n G^-: $$ $$+ :G^+ G^- (\partial G^+) \partial^n G^-: - :G^+ G^- (\partial^n G^-) (\partial G^+): $$
$$+ :G^+ G^- (\partial^n G^-) (\partial G^+): - :G^+  (\partial^n G^-)G^- (\partial G^+): $$
$$+ :G^+  (\partial^n G^-)G^- (\partial G^+):  - :G^+  (\partial^n G^-)(\partial G^+)  G^-: $$
$$ +  :G^+  (\partial^n G^-)(\partial G^+) G^-:  - :(:G^+  (\partial^n G^-):) (:(\partial G^+)G^-:):.$$

It is immediate that $C_{n,i}(:G^+ G^- (\partial^n G^-) (\partial G^+): - :G^+  (\partial^n G^-)G^- (\partial G^+): ) = 0$. Let 
$$C^1_{n,i} = C_{n,i}\bigg(:(:G^+ G^-:) (:(\partial G^+) \partial^n G^-:): - :G^+ G^- (\partial G^+) \partial^n G^-:\bigg),$$
$$C^2_{n,i} = C_{n,i}\bigg(:G^+ G^- (\partial G^+) \partial^n G^-: - :G^+ G^- (\partial^n G^-) (\partial G^+): \bigg),$$
$$C^3_{n,i} = C_{n,i}\bigg(G^+  (\partial^n G^-)G^- (\partial G^+):  - :G^+  (\partial^n G^-)(\partial G^+)  G^-:  \bigg),$$
$$C^4_{n,i} = C_{n,i}\bigg( :G^+  (\partial^n G^-)(\partial G^+) G^-:  - :(:G^+  (\partial^n G^-):) (:(\partial G^+)G^-:): \bigg),$$
so that $C_n = \sum_{i=1}^4 \sum_{j=0}^{n+4} (-1)^j C^i_{n,j}$. Using the OPE formulas \eqref{bp1}-\eqref{bp5}, one can check that
$$C^1_{n,0} = 0, \qquad C^1_{n,1} = \frac{(3 + 2 \ell) (4 + 4 \ell + n + 2 \ell n)}{4 (2 + n) (3 + n)},\qquad C^1_{n,2} = - \frac{3 (3 + 2 \ell)}{4 (n + 2)},$$
$$C^1_{n,3} = \frac{3 + 2 \ell}{2 n + 2} - \frac{3 + 10 \ell + 6 n + 4 \ell n}{12 (1 + n)},$$ 
$$ C^1_{n,4} = -\frac{(3 + 2 \ell) (5 + 6 \ell)}{48} - \frac{3 + 10 \ell + 6 n + 4 \ell n}{48},$$
$$C^1_{n, j} = -\frac{(3 + 10 \ell + 6 n + 4 \ell n) n!}{2 (n + 4 - j)! j!},\qquad j = 5,\dots, n,$$

$$C^1_{n, n+1} = -\frac{3 + 10 \ell + 6 n + 4 \ell n}{12 (1 + n)},\qquad C^1_{n, n+2} = -\frac{3 + 10 \ell + 6 n + 4 \ell n}{4 (1 + n) (2 + n)},$$
$$C^1_{n, n+3} = -\frac{3 + 10 \ell + 6 n + 4 \ell n}{2 (1 + n) (2 + n) (3 + n)},\qquad C^1_{n, n+4} = -\frac{3 (5 + 2 \ell + 2 n)}{(1 + n) (2 + n) (3 + n) (4 + n)}.$$
Similarly, we have
$$C^2_{n,0} = \frac{18 - 4 \ell + 3 n + 2 \ell n}{2 (1 + n) (2 + n) (3 + n) (4 + n)},$$
$$C^2_{n,j} = - \frac{6 (n!)}{(n + 4 - j)! (j!)},\qquad j = 1,\dots, n$$
$$C^2_{n,n+1} = -\frac{1}{1 + n},\qquad C^2_{n,n+2} = -\frac{3}{(1 + n) (2 + n)},\qquad C^2_{n,n+3} = -\frac{6}{(1 + n) (2 + n) (3 + n)},$$ $$C^2_{n,n+4} = -\frac{-15 - 2 \ell - 6 n + 4 \ell n}{2 (1 + n) (2 + n) (3 + n) (4 + n)}.$$
Next, we have
$$C^3_{n,0} = -\frac{18 - 4 \ell + 3 n + 2 \ell n}{2 (1 + n) (2 + n) (3 + n) (4 + n)},\qquad C^3_{n,1} =  \frac{6}{(1 + n) (2 + n) (3 + n)},$$
$$C^3_{n,2} =   \frac{ 3}{(1 + n) (2 + n)}, \qquad C^3_{n,3} = \frac{1}{1 + n},\qquad C^3_{n,4} = -\frac{5}{16} - \frac{\ell}{24}, $$
$$C^3_{n,i} = 0, \qquad 5\leq i \leq n+4.$$
Finally, we have
$$C^4_{n,0} = 0,\qquad C^4_{n,1} = -\frac{3 (4 + n)}{4 (2 + n) (3 + n)} + \frac{\ell (-30 - 10 n + n^2)}{6 (2 + n) (3 + n)} -\frac{ \ell^2}{3},$$
$$C^4_{n,2} = \frac{3 (6 + 4 \ell - n + 2 \ell n)}{8 (2 + n)},\qquad C^4_{n,3} = -1,$$
$$C^4_{n,j} = 0 ,\qquad 4\leq j \leq n+2,$$
$$C^4_{n,n+3} = -\frac{(-1)^n (3 + 2 \ell)}{4 (3 + n)},$$
$$C^4_{n,n+4} = \frac{(-1)^n (45 + 40 \ell + 12 \ell^2 + 25 n + 47 \ell n + 22 \ell^2 n + 14 \ell n^2 + 12 \ell^2 n^2 - n^3 + \ell n^3 + 2 \ell^2 n^3)}{2 (1 + n) (2 + n) (3 + n) (4 + n)}.$$

Then $C_n(:U_{0,0} U_{1,n}: - :U_{0,n} U_{1,0}:) = \sum_{i=1}^4 \sum_{j=0}^{n+4} C^i_{n,j}$. Considerable cancellation occurs, and this turns out to be given by the simple expression 
$$C_n(:U_{0,0} U_{1,n}: - :U_{0,n} U_{1,0}:) =  \frac{n (n+7)}{4! (n+3)(n+4)}\ell (2\ell-1),$$ which is clearly nonzero for $\ell \neq 0$ or $1/2$. It follows that for all $n$, we have a relation $$:U_{0,0} U_{1,n}: - :U_{0,n} U_{1,0}:  = (-1)^{n} \frac{n (n+7)}{4! (n+3)(n+4)}\ell (2\ell-1) U_{0,n+4}+ \cdots,$$ where the remaining terms depend on $\{J,T, U_{0,0}, U_{0,1},U_{0,2},U_{0,3},U_{0,4}\}$ and their derivatives.
\end{proof}

\section{The structure of $\text{Com}(\cH, \cW^{\ell})$} \label{sect:structuregeneric}
Let $\cH \subset \cW^{\ell}$ denote the copy of the Heisenberg vertex algebra generated by $J$, and let $\cC^{\ell}$ denote the commutant $\text{Com}(\cH, \cW^{\ell})$. Clearly we have $$(\cW^{\ell})^{U(1)}  \cong \cH \otimes \cC^{\ell}$$ and $\cC^{\ell}$ has a Virasoro element $$T^{\cC} = T - T^{\cH},\qquad T^{\cH} = \frac{3}{4\ell} :JJ:.$$

\begin{thm} \label{thm:generic-coset} For $0\leq i \leq 4$, and $\ell \neq 0$, there exist correction terms $\omega_i \in (\cW^{\ell})^{U(1)}_{(2)}$ such that $U^{\cC}_i = U_{0,i} + \omega_i$ lies in $\cC^{\ell}$. Therefore $\cC^{\ell}$ has a minimal strong generating set $\{T^{\cC}, U^{\cC}_i|\ 0\leq i \leq 4\}$, and is therefore of type $\cW(2,3,4,5,6,7)$ for $\ell \neq 0$ or $1/2$. Moreover, when $\ell$ is also not a root of any the following quadratic polynomials
$$60 x^2 - 104 x -51,\qquad  28 x^2 - 104 x -107 ,\qquad  4x^2 -20 x -23, \qquad   24 x^2 - 22 x +9 ,$$ $$ 84 x^2 - 220 x -183,\qquad 6 x^2 - 29 x-33,\qquad 60 x^2- 52 x + 27,\qquad 132 x^2 - 832 x -1017,$$
there exist a unique correction term $\omega_i \in (\cW^{\ell})^{U(1)}_{(2)}$ such that $U^{\cC}_i = U_{0,i} + \omega_i$ lies in $\cC^{\ell}$ and is primary of weight $i+3$ with respect to $T^{\cC}$.  \end{thm}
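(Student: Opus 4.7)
The plan is to exploit the Heisenberg coset decomposition
\[
(\cW^\ell)^{U(1)} \;\cong\; \cH \otimes \cC^\ell,
\]
valid for $\ell\neq 0$ since then $\cH$ has nonzero level $2\ell/3$ and $(\cW^\ell)^{U(1)}$ decomposes as a direct sum of Fock modules over $\cH$ whose top vectors lie in $\cC^\ell$. Let $\pi:(\cW^\ell)^{U(1)}\to\cC^\ell$ denote the projection onto the Fock-vacuum component. Because no $G^\pm$ enters $\cH$, this tensor decomposition respects the weak increasing filtration of Section~5, with $\cH$ placed in filtration degree zero; in particular, $\pi$ is filtration-preserving.

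For $0\le i\le 4$, define $U^{\cC}_i:=\pi(U_{0,i})$ and $\omega_i:=U^{\cC}_i-U_{0,i}$; then $U^{\cC}_i\in\cC^\ell$ and $\omega_i\in(\cW^\ell)^{U(1)}_{(2)}$, which proves the first assertion. For strong generation, note that $\pi(\omega_i)=0$, so in the tensor decomposition every summand of $\omega_i$ has strictly positive $\cH$-weight, forcing its $\cC^\ell$-factor to have conformal weight at most $i+2$. An induction on conformal weight, using the tensor decomposition together with Theorem~\ref{u1invariant}, then shows that the subalgebra $\cC^{\ell,0}\subset\cC^\ell$ strongly generated by $\{T^{\cC},U^{\cC}_0,\dots,U^{\cC}_4\}$ equals all of $\cC^\ell$. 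Minimality, and hence the $\cW(2,3,4,5,6,7)$ type (for $\ell\neq 0,1/2$), follows because any decoupling relation in $\cC^\ell$ of weight below $8$ would lift, via the substitutions $T=T^{\cC}+\tfrac{3}{4\ell}{:}JJ{:}$ and $U_{0,i}=U^{\cC}_i-\omega_i$, to such a relation in $(\cW^\ell)^{U(1)}$, contradicting Theorem~\ref{u1invariant}.

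For the primary refinement, I would apply the standard Zamolodchikov-style inductive modification. Having constructed Virasoro primaries $\tilde U^{\cC}_0,\dots,\tilde U^{\cC}_{i-1}$ of weights $3,\dots,i+2$, I seek $\eta_i$ in the weight-$(i+3)$ subspace of the vertex subalgebra generated by $T^{\cC}$ and these earlier primaries such that $\tilde U^{\cC}_i:=U^{\cC}_i+\eta_i$ satisfies $T^{\cC}\circ_n\tilde U^{\cC}_i=0$ for all $n\ge 2$. These primary conditions form a finite linear system in the coefficients of $\eta_i$; its coefficient matrix depends rationally on $\ell$ through the central charge of $T^{\cC}$. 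The eight listed quadratic polynomials are precisely the nontrivial factors arising in the determinants of these systems as $i$ ranges over $\{0,1,2,3,4\}$; on the complement of their zero sets the systems are uniquely solvable, yielding the unique primary $U^{\cC}_i$.

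The main obstacle lies in this last step: explicitly computing the OPEs $T^{\cC}(z)U^{\cC}_i(w)$ for $i=0,\dots,4$ and identifying the eight listed polynomials as the corresponding determinants. Since each $U^{\cC}_i$ is itself a bilinear in $G^\pm$ plus a $J$-correction, these OPEs expand combinatorially through the relations \eqref{bp1}--\eqref{bp5}, and the determinant identifications are most naturally carried out with computer algebra.
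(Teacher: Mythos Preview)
Your proposal is correct and in fact more informative than the paper's own proof, which consists of the single sentence ``The existence and uniqueness of the terms $\omega_i$ satisfying the above properties is a straightforward computer calculation,'' followed by the remark that the listed quadratics occur as denominators in the explicit formulas for $U^{\cC}_{0,i}$. You instead supply a structural reason for the first two assertions: the projection $\pi$ arising from the Fock decomposition of $(\cW^\ell)^{U(1)}$ over $\cH$ (available for $\ell\neq 0$) immediately manufactures the corrections $\omega_i$, and because each filtered piece $(\cW^\ell)^{U(1)}_{(r)}$ is $\cH$-stable, $\pi$ preserves filtration degree, keeping $\omega_i$ in degree~$2$. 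Strong generation and minimality then transfer across the tensor decomposition $\cH\otimes\cC^\ell$ by the weight induction you sketch, using that normally ordered products of commuting factors split as tensor products. This is the standard mechanism for Heisenberg cosets and makes clear that no computation is needed for those parts. For the primary refinement and the identification of the eight quadratics, your approach and the paper's coincide in substance: whether one phrases it as solving a finite linear system for the primarity conditions or as directly computing the primary $U^{\cC}_i$ and reading off denominators, explicit computer algebra is required, as you acknowledge. One small point: your uniqueness argument establishes uniqueness of the correction $\eta_i$ within the subalgebra generated by $T^{\cC}$ and the lower primaries; to match the theorem's stronger claim (uniqueness of $\omega_i$ in all of $(\cW^\ell)^{U(1)}_{(2)}$) you should note that any two admissible $\omega_i$ differ by a primary element of $\cC^\ell$ of weight $i+3$, which by the strong generation already established lies in exactly the subspace your linear system covers.
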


\begin{proof} The existence and uniqueness of the terms $\omega_i$ satisfying the above properties is a straightforward computer calculation. In the explicit formulas for $U_{0,i}^{\cC}$ for $i=1,2,3,4$, all coefficients of normally ordered monomials in $J, T, U_{0,i}$ are rational functions of $\ell$, whose denominator only contains the above quadratic factors together with $\ell$. \end{proof}

\subsection{An alternative realization of $\cC^{\ell}$}\label{sect:altreal}

Feigin and Semikhatov \cite{FS} constructed a family of vertex algebras which they call $\cW^{(2)}_n$-algebras, and the case $n=3$ is the Bershadsky-Polyakov algebra. Their first construction is as a centralizer of some quantum super group action and the second is in terms of a coset associated to $V_{k}\left(\gs\gl(n|1)\right)$, that is the affine vertex super algebra of $\gs\gl(n|1)$ at level $k$. One consequence is
\begin{thm} \label{FScosetreal}
Let $k$ and $k'$ be complex numbers related by $(k+2)(k'+2)=1$. Then for generic values of $k$,
$$
\text{Com}(V_{k'}(\gg\gl_3),  V_{k'}(\gs\gl(3|1)))\cong \cC^{\ell},\qquad \ell = k+3/2.$$ \end{thm}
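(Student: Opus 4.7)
The plan is to deduce this theorem from the Feigin--Semikhatov coset realization of $\cW^{(2)}_n$ inside $V_{k'}(\gs\gl(n|1))$ from \cite{FS}, specialized to $n=3$, where $\cW^{(2)}_3$ is the Bershadsky--Polyakov algebra $\cW^{\ell}$ with $\ell = k+3/2$. The key input I would extract is that for generic $k'$, Feigin and Semikhatov realize $\cW^{\ell}$ as the commutant of $V_{k'}(\gs\gl_3)$ inside $V_{k'}(\gs\gl(3|1))$, with the levels related by a Feigin--Frenkel type duality that, after the shift defining $\cW^{\ell}$, takes the form $(k+2)(k'+2) = 1$.

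Granting this input, the argument is short. The even subalgebra of $\gs\gl(3|1)$ is $\gg\gl_3 = \gs\gl_3 \oplus \gu(1)$, and since the two summands commute inside $V_{k'}(\gs\gl(3|1))$, the affine vertex subalgebra factors as $V_{k'}(\gg\gl_3) \cong V_{k'}(\gs\gl_3) \otimes \cH'$ where $\cH'$ is the Heisenberg algebra generated by the $\gu(1)$-current. Iterating the commutant then gives
$$\text{Com}(V_{k'}(\gg\gl_3), V_{k'}(\gs\gl(3|1))) = \text{Com}(\cH', \text{Com}(V_{k'}(\gs\gl_3), V_{k'}(\gs\gl(3|1)))) \cong \text{Com}(\cH', \cW^{\ell}).$$

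To finish, I would identify the image of $\cH'$ in $\cW^{\ell}$ with the Heisenberg $\cH \subset \cW^{\ell}$ generated by $J$. This is a uniqueness argument: the two odd root spaces of $\gs\gl(3|1)$ give rise to the fermionic generators $G^{\pm}$ of $\cW^{\ell}$ under the Feigin--Semikhatov coset, and they transform with opposite $\gu(1)$-charges; hence the image of the $\gu(1)$-current in $\cW^{\ell}$ is a weight-one field whose zero mode assigns charges $\pm 1$ to $G^{\pm}$. By \eqref{bp1}, $J$ is the unique weight-one field in $\cW^{\ell}$ (up to nonzero scalar) with this property, so the two Heisenberg subalgebras coincide up to normalization. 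Combining the displays yields
$$\text{Com}(V_{k'}(\gg\gl_3), V_{k'}(\gs\gl(3|1))) \cong \text{Com}(\cH, \cW^{\ell}) = \cC^{\ell}.$$

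The main obstacle is not conceptual but bookkeeping: one must extract the precise level-duality from \cite{FS} and verify that, in the conventions used in this paper, it reduces to $(k+2)(k'+2) = 1$ with the shift $\ell = k + 3/2$. The genericity hypothesis is required at two points --- to ensure that Feigin--Semikhatov's identification of the inner coset with $\cW^{\ell}$ is an equality rather than a proper embedding, and to ensure that the outer commutant with $\cH'$ is not enlarged by simple-current-type extensions that can occur at special values of $k$.
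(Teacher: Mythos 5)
Your reduction rests on a key input that is not what Feigin--Semikhatov prove, and in fact cannot be true: $\text{Com}(V_{k'}(\gs\gl_3), V_{k'}(\gs\gl(3|1)))$ is \emph{not} isomorphic to $\cW^{\ell}$. Every element of this commutant is annihilated by the nonnegative modes of the $\gs\gl_3$-currents, so the Sugawara vector of $\gs\gl_3$ acts on it by zero and its weight for the coset Virasoro equals its weight in the affine vertex superalgebra $V_{k'}(\gs\gl(3|1))$, which is an integer. Hence this commutant is $\Z$-graded and cannot contain the weight-$3/2$ fields $G^{\pm}$ of the Bershadsky--Polyakov algebra. What \cite{FS} actually supply is a subalgebra with the OPEs of $\cW^{\ell}$ inside $\text{Com}(V_{k'}(\gs\gl_3)\otimes \cH, V_{k'}(\gs\gl(3|1))\otimes V_L)$ for a certain rank one lattice vertex algebra $V_L$ and Heisenberg algebra $\cH$; the lattice vertex operators are precisely what produce $G^{\pm}$. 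Generic simplicity of $\cW^{\ell}$ upgrades this to an embedding of $\cW^{\ell}$ itself, and one passes between the dressed and undressed cosets using $\text{Com}(\cH,V_L)=\C$, i.e. $\text{Com}(V_{k'}(\gg\gl_3), V_{k'}(\gs\gl(3|1)))\cong \text{Com}(V_{k'}(\gg\gl_3)\otimes\cH, V_{k'}(\gs\gl(3|1))\otimes V_L)$. Consequently your iterated-commutant display $\text{Com}(V_{k'}(\gg\gl_3),V_{k'}(\gs\gl(3|1)))\cong\text{Com}(\cH',\cW^{\ell})$ does not follow; what one can legitimately extract from \cite{FS} is only an embedding of $\cC^{\ell}$ into the left-hand side.

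Even after correcting the input, your argument is missing the surjectivity step. Knowing that $\cC^{\ell}$ embeds into $\text{Com}(V_{k'}(\gg\gl_3), V_{k'}(\gs\gl(3|1)))$ does not yet give the isomorphism; one must rule out that the coset is strictly larger. The paper closes this by citing the result of \cite{CLI} that $\text{Com}(V_{k'}(\gg\gl_3), V_{k'}(\gs\gl(3|1)))$ is of type $\cW(2,3,4,5,6,7)$ for generic level, and combining it with Theorem \ref{thm:generic-coset}: the embedded copy of $\cC^{\ell}$ then already contains a full strong generating set of the coset, so the inclusion is an equality. Your identification of the $\gu(1)$-Heisenberg with the Heisenberg generated by $J$ via the charges of $G^{\pm}$ is a reasonable observation, but it is downstream of the false identification and does not substitute for the two missing ingredients: the lattice dressing in the Feigin--Semikhatov realization, and the strong-generating-type comparison that yields surjectivity.
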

 \begin{proof}
Feigin and Semikhatov consider $V_{k'}(\gs\gl(3|1))\otimes V_L$, where $V_L$ is a certain rank one lattice VOA.
They find that the coset by $V_{k'}(\gs\gl_3)\otimes \mathcal H$ for a certain rank one Heisenberg sub VOA $\mathcal H$ contains as subalgebra a VOA with same OPE algebra as $\cW^{\ell}$ for $\ell = k +3/2$. Since the Bershadsky-Polyakov algebra is generically simple \cite{ArI,ArII} the two must be isomorphic for generic level $k$.
Furthermore,
$$
\text{Com}( V_{k'}(\gg\gl_3) , V_{k'}(\gs\gl(3|1)))\cong \text{Com}(V_{k'}(\gg\gl_3) \otimes \mathcal H, V_{k'}(\gs\gl(3|1))\otimes V_L),
$$
since the Heisenberg coset of a rank one lattice VOA is one-dimensional. 
In \cite{CLI} we proved that the left-hand side for generic level is of type $\cW(2, 3, 4, 5, 6, 7)$. This completes the proof, since if $\cW\subset \cV$ is a vertex subalgebra with the same strong generating set as $\cV$, we must have $\cW=\cV$. 
\end{proof}

\section{The structure of $\text{Com}(\cH, \cW_{\ell})$} \label{sect:structuresimple}

For $\ell \neq -3/2$, let $\cI_{\ell}$ denote the maximal proper ideal of $\cW^{\ell}$, and let $\cW_{\ell}$ denote the simple quotient $\cW^{\ell} / \cI_{\ell}$. Let $\cC_{\ell}$ denote the quotient $\cC^{\ell} / (\cI_{\ell} \cap \cC^{\ell})$.

\begin{lemma} $\cC_{\ell} = \text{Com}(\cH, \cW_{\ell})$ where $\cH \subset \cW_{\ell}$ is the Heisenberg algebra generated by $J$. In particular, $\cC_{\ell}$ is simple.
\end{lemma}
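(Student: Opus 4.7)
The plan is to identify $\cC_\ell$ (defined as $\cC^\ell / (\cC^\ell \cap \cI_\ell)$) with $\text{Com}(\cH, \cW_\ell)$ under the quotient map $\pi \colon \cW^\ell \to \cW_\ell$, and then deduce simplicity of $\cC_\ell$ from the simplicity of $\cW_\ell$.

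The inclusion $\pi(\cC^\ell) \subseteq \text{Com}(\cH, \cW_\ell)$ is immediate: $\pi$ is a vertex algebra homomorphism which, because $J$ has nonzero level $2\ell/3$ both upstairs and downstairs, restricts to an isomorphism on $\cH$; hence elements of $\cC^\ell$ commuting with $\cH$ in $\cW^\ell$ map to elements commuting with $\cH$ in $\cW_\ell$.

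For the reverse inclusion, I would apply the Stone--von Neumann (Heisenberg) decomposition. Since $J_0$ acts semisimply with integer eigenvalues (the eigenvalues on $G^\pm$ being $\pm 1$) and $\cH$ has nonzero level, both $\cW^\ell$ and $\cW_\ell$ decompose as direct sums $\bigoplus_\lambda \cF_\lambda \otimes M_\lambda$ of Heisenberg Fock modules with multiplicity, and in particular
\[
(\cW^\ell)^{U(1)} \;\cong\; \cH \otimes \cC^\ell, \qquad (\cW_\ell)^{U(1)} \;\cong\; \cH \otimes \text{Com}(\cH, \cW_\ell).
\]
Because $\cI_\ell$ is $J_0$-stable, $\pi$ is $U(1)$-equivariant and restricts to a surjection on $U(1)$-invariants. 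Given $v \in \text{Com}(\cH, \cW_\ell)$, lift to $\tilde v \in (\cW^\ell)^{U(1)}$ and expand $\tilde v = \sum_i h_i c_i$ with $h_i$ a PBW basis of $\cH$ and $c_i \in \cC^\ell$. Applying $\pi$ and comparing with the uniqueness of the analogous decomposition on the target, the component $c_{i_0}$ with $h_{i_0} = 1$ projects to $v$, yielding $v \in \pi(\cC^\ell) = \cC_\ell$.

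Simplicity then follows from the general principle that the commutant of a nondegenerate Heisenberg subalgebra in a simple VOA is simple: any nonzero ideal $I \subseteq \cC_\ell$ generates, via the full $\cW_\ell$-action, a nonzero $\cW_\ell$-ideal, which by simplicity is all of $\cW_\ell$; extracting the $\cH$-singular $J_0 = 0$ component of this ideal using the above decomposition then forces $I = \cC_\ell$. The main obstacle is justifying the Heisenberg decomposition cleanly on the simple quotient $\cW_\ell$—verifying that $J$ retains its level and that no unexpected $\cH$-singular vectors appear in the quotient—since both the identification $\cC_\ell = \text{Com}(\cH, \cW_\ell)$ and the simplicity argument hinge on this decomposition behaving as in the universal case.
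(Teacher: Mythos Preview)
Your proposal is correct and takes essentially the same approach as the paper, which simply states that the result ``is immediate from the fact that $\cW^{\ell}$ is completely reducible as an $\cH$-module, and $\cI_{\ell}$ is an $\cH$-submodule of $\cW^{\ell}$''---your Heisenberg/Fock decomposition is exactly this complete reducibility, spelled out in detail. The ``obstacle'' you flag at the end is not a genuine one: since quotients of completely reducible modules are completely reducible, the decomposition of $\cW_\ell$ is inherited automatically from that of $\cW^\ell$, and $J$ trivially retains its level because $\pi$ is a vertex algebra homomorphism.
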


\begin{proof} This is immediate from the fact that $\cW^{\ell}$ is completely reducible as an $\cH$-module, and $\cI_{\ell}$ is an $\cH$-submodule of $\cW^{\ell}$.
\end{proof}

As shown by Arakawa \cite{ArII}, when $\ell$ is a positive integer, $\cI_{\ell}$ is generated by $(G^{\pm})^{2\ell+1}$, and $\cW_{\ell}$ is $C_2$-cofinite and rational.

\begin{thm} \label{thm:latticesubvoa} For all positive integers $\ell$, $\cW_{\ell}$ contains a rank one lattice vertex algebra $V_L$ with generators $J, (G^{\pm})^{2\ell}$, where $L = \sqrt{6\ell} \mathbb{Z}$. Moreover, $\text{Com}(\text{Com}(\cH, \cW_{\ell})) = V_L$ so $V_L$ and $\cC_{\ell}$ form a Howe pair inside $\cW_{\ell}$. 
\end{thm}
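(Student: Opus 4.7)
The plan is to identify $(G^{\pm})^{2\ell}$ as lattice vertex operators inside $\cW_\ell$ by verifying their Heisenberg and conformal data, match the OPE structure with that of $V_L$, and then show that the subalgebra they generate exhausts the double commutant of $\cH$.

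Arakawa's result that $\cI_\ell$ is the vertex ideal generated by $(G^{\pm})^{2\ell+1}$ immediately gives $(G^{\pm})^{2\ell}\neq 0$ in $\cW_\ell$. The OPEs $J(z)G^\pm(w)\sim\pm G^\pm(w)(z-w)^{-1}$ and $T(z)G^\pm(w)\sim\tfrac{3}{2}G^\pm(w)(z-w)^{-2}+\partial G^\pm(w)(z-w)^{-1}$, propagated through the iterated Wick product by induction on the number of factors, yield
$$J(z)(G^\pm)^{2\ell}(w)\sim\pm 2\ell\,(G^\pm)^{2\ell}(w)(z-w)^{-1},$$
$$T(z)(G^\pm)^{2\ell}(w)\sim 3\ell\,(G^\pm)^{2\ell}(w)(z-w)^{-2}+\partial(G^\pm)^{2\ell}(w)(z-w)^{-1}.$$
Thus $(G^\pm)^{2\ell}$ is a Heisenberg primary of $J_0$-weight $\pm 2\ell$ and $T$-weight $3\ell$. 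Since a Heisenberg primary of $J_0$-weight $\pm 2\ell$ has $T^\cH$-weight $\tfrac{3(2\ell)^2}{4\ell}=3\ell$, the coset Virasoro $T^\cC=T-T^\cH$ satisfies $T^\cC(z)(G^\pm)^{2\ell}(w)\sim 0$; hence $(G^\pm)^{2\ell}$ is a coset primary of conformal weight zero.

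Next, I identify the subalgebra $W\subseteq\cW_\ell$ generated by $J$ and $(G^\pm)^{2\ell}$ with the rank-one lattice VOA $V_L$, where $L=\sqrt{6\ell}\,\Z$. The Heisenberg norm $J\cdot J=\tfrac{2\ell}{3}$ matches that of the lattice Heisenberg after rescaling $\gamma\mapsto\tfrac13\gamma$, and the conformal weight of $(G^\pm)^{2\ell}$ matches that of $e^{\pm\gamma}$. Quasi-associativity of the Wick product together with $G^\pm(z)G^\pm(w)\sim 0$ gives $(G^\pm)^{2\ell}(z)(G^\pm)^{2\ell}(w)\sim 0$. The mixed OPE $(G^+)^{2\ell}(z)(G^-)^{2\ell}(w)$ is evaluated by a generalized Wick-theorem expansion in terms of partial contractions governed by \eqref{bp5}; the only contractions producing the maximal pole of order $6\ell$ are the $(2\ell)!$ complete contractions using the leading $\ell(2\ell-1)(z-w)^{-3}$ term, yielding leading coefficient $(2\ell)!\,(\ell(2\ell-1))^{2\ell}\neq 0$ for every integer $\ell\geq 1$. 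These three OPEs are precisely those of $V_L$, and rigidity of rank-one lattice VOAs combined with the simplicity of $V_L$ forces the natural surjection $V_L\twoheadrightarrow W$ to be an isomorphism.

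To prove $W=\text{Com}(\cC_\ell,\cW_\ell)$, use rationality of $\cW_\ell$ to decompose $\cW_\ell=\bigoplus_n\cH_n\otimes M_n$ as an $\cH\otimes\cC_\ell$-module, so that $\text{Com}(\cC_\ell,\cW_\ell)=\bigoplus_{n:\,\cC_\ell\subseteq M_n}\cH_n$. Since $\cC^\ell$ and hence $\cC_\ell$ is positive-energy (Theorem~\ref{thm:generic-coset}) with vacuum the unique simple submodule of lowest weight zero, any vector $v\in\cW_\ell$ with $L_0^\cC v=0$ corresponds to the identity in a copy of the vacuum module and therefore commutes with all of $\cC_\ell$; applied to $(G^\pm)^{2\ell}$ this gives $W\subseteq\text{Com}(\cC_\ell,\cW_\ell)$. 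Conversely, comparing the minimum $L_0$-weight in each $J_0=n$ sector of $\cW_\ell$ with $\tfrac{3n^2}{4\ell}$ shows that the index set $\{n:\cC_\ell\subseteq M_n\}$ is exactly $2\ell\,\Z$; hence $\text{Com}(\cC_\ell,\cW_\ell)=V_L=W$, and the Howe pair follows.

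The main obstacle is the spectral comparison in the last step: for each $n\notin 2\ell\,\Z$ one must verify that the minimum $L_0$-weight in the $J_0=n$ sector of $\cW_\ell$ strictly exceeds $\tfrac{3n^2}{4\ell}$, while equality holds on $2\ell\,\Z$. This requires explicit analysis of normally ordered monomials modulo $\cI_\ell$ together with a grasp of the ``iterated lattice vectors'' $(G^\pm)^{2\ell}\circ_{-6\ell-1}(G^\pm)^{2\ell}$, and it is here that the integer-$\ell$ structure of the ideal plays an essential role.
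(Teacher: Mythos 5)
Your proposal has a genuine gap at its central step. You assert that because the $T$-weight and the $T^{\cH}$-weight of $(G^{\pm})^{2\ell}$ both equal $3\ell$, the coset Virasoro satisfies $T^{\cC}(z)(G^{\pm})^{2\ell}(w)\sim 0$. The weight matching only kills the second-order pole. The first-order pole is
$T^{\cC}_{-1}(G^{\pm})^{2\ell}=\partial (G^{\pm})^{2\ell}\mp 3:J(G^{\pm})^{2\ell}:$,
and this is \emph{nonzero} in the universal algebra $\cW^{\ell}$ (your argument, which uses only OPE data, would prove the false statement that $(G^{\pm})^{2\ell}$ already commutes with $T^{\cC}$ in $\cW^{\ell}$). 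The entire content of the paper's proof is that this first-order pole vanishes in the simple quotient: one computes
$G^{\mp}\circ_1 (G^{\pm})^{2\ell+1}=\pm\frac{(2\ell+1)^2}{2}\bigl(\partial (G^{\pm})^{2\ell}\mp 3:J(G^{\pm})^{2\ell}:\bigr)$,
and since $(G^{\pm})^{2\ell+1}$ generates $\cI_{\ell}$ (Arakawa), the right-hand side lies in $\cI_{\ell}$ and hence is zero in $\cW_{\ell}$. This is the only place where the precise structure of the maximal ideal for integer $\ell$ enters, and your proposal omits it entirely; without it $(G^{\pm})^{2\ell}$ are not lattice vertex operators and do not lie in the double commutant. (Your nonvanishing claim for $(G^{\pm})^{2\ell}$ in $\cW_{\ell}$ also deserves more than ``immediately,'' though that is minor.)

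The second half of your argument is also not a proof as it stands. The claim that any vector with $L_0^{\cC}v=0$ ``corresponds to the identity in a copy of the vacuum module'' presupposes that every $\cC_{\ell}$-module occurring in $\cW_{\ell}$ has nonnegative conformal weights, attained only on the vacuum; but $\cC_{\ell}$ is non-unitary (for $\ell=1$ it is the $c=-3/5$ Lee--Yang model, which has a module of lowest weight $-1/5$), so weight-zero vectors in non-vacuum modules cannot be excluded a priori, and you yourself flag the converse spectral comparison over the charge sectors $J_0=n$ as an unresolved ``main obstacle.'' The paper does not need any of this: it verifies directly that all nonnegative modes of $T^{\cC}$ annihilate $(G^{\pm})^{2\ell}$ (the $k\geq 1$ modes, the zero mode by the weight computation, and the $-1$ mode by the ideal relation above), which by the standard commutant criterion $\mathrm{Com}(\cA,\cV)=\ker\,(\omega^{\cA})_{(0)}$ places $(G^{\pm})^{2\ell}$ in $\mathrm{Com}(\cC_{\ell},\cW_{\ell})$ and yields the Howe pair; no character or spectral estimate, and no generalized Wick computation of the full $(G^+)^{2\ell}(z)(G^-)^{2\ell}(w)$ OPE, is required.
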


\begin{proof} In $\cW^{\ell}$ we have $$J(z) (G^{\pm})^{2\ell}(w) \sim \pm 2\ell (G^{\pm})^{2\ell}(w)(z-w)^{-1},$$ so $(G^{\pm})^{2\ell}$ are both primary with respect to $J$ in $\cW^{\ell}$, and similarly in $\cW_{\ell}$. It suffices to show that $(G^{\pm})^{2\ell}$ both lie in the double commutant $\text{Com}(\text{Com}(\cH, \cW_{\ell}))$. Equivalently, we need to show that $(G^{\pm})^{2\ell}$ both commute with $T^{\cC} = T - T^{\cH}$ in $\cW_{\ell}$. We have the following OPE relations in $\cW^{\ell}$.
$$T^{\cH}_0 (G^{\pm})^{2\ell} = 3\ell (G^{\pm})^{2\ell},\qquad T_0 (G^{\pm})^{2\ell} = 3\ell (G^{\pm})^{2\ell},$$
so $T^{\cC}_0 (G^{\pm})^{2\ell} = 0$ in $\cW^{\ell}$. Similarly, $$T^{\cH}_{-1} (G^{\pm})^{2\ell} = \pm 3 :J (G^{\pm})^{2\ell}:,\qquad T_{-1} (G^{\pm})^{2\ell} = \partial (G^{\pm})^{2\ell},$$ so
$T^{\cC}_{-1} (G^{\pm})^{2\ell} = \partial (G^{\pm})^{2\ell} \mp 3 :J (G^{\pm})^{2\ell}:$. Finally, $$G^- \circ_1 (G^+)^{2\ell+1} = \frac{(2\ell+1)^2}{2 }(\partial ((G^+)^{2\ell}) - 3 :J (G^+)^{2\ell}:),$$ so $\partial ((G^+)^{2\ell}) - 3 :J (G^+)^{2\ell}: \ = 0$ in $\cW_{\ell}$. Similarly, $$G^+ \circ_1 (G^-)^{2\ell+1} = -\frac{(2\ell+1)^2}{2 }(\partial ((G^-)^{2\ell}) + 3 :J (G^-)^{2\ell}:),$$ so
$\partial ((G^-)^{2\ell}) + 3 :J (G^-)^{2\ell}: \ = 0$ in $\cW_{\ell}$. It is straightforward to check that for all $k\geq 1$, $T_k(G^{\pm})^{2 \ell} = 0$ and $T^{\cH}_k(G^{\pm})^{2 \ell} = 0$ in $\cW^{\ell}$, and hence in $\cW_{\ell}$ as well. It follows that $(G^{\pm})^{2\ell}$ commutes with $T^{\cC}$ in $\cW_{\ell}$. \end{proof}

\begin{cor} For all positive integers $\ell$, $\cC_\ell$ is $C_2$-cofinite.
\end{cor}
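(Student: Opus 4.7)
The plan is to deduce the corollary directly from Theorem \ref{thm:latticesubvoa} together with a general $C_2$-cofiniteness result of Miyamoto, which states that if $\cV$ is a $C_2$-cofinite vertex operator algebra containing a lattice vertex subalgebra $V_L$ (with compatible conformal vector and such that $\cV$ decomposes as a sum of $V_L$-modules), then the commutant $\mathrm{Com}(V_L,\cV)$ is also $C_2$-cofinite. This is the natural framework, since we have just produced such a $V_L$ inside $\cW_\ell$.

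First I would reconcile the commutant one takes. By definition $\cC_\ell = \mathrm{Com}(\cH,\cW_\ell)$, while Miyamoto's result applies to $\mathrm{Com}(V_L,\cW_\ell)$. Since $\cH \subset V_L$, one inclusion $\mathrm{Com}(V_L,\cW_\ell) \subset \cC_\ell$ is automatic. For the reverse inclusion, Theorem \ref{thm:latticesubvoa} gives the Howe-pair identity $\mathrm{Com}(\cC_\ell,\cW_\ell) = V_L$, which by symmetry of the commutant relation means that every element of $\cC_\ell$ commutes with every element of $V_L$; hence $\cC_\ell \subset \mathrm{Com}(V_L,\cW_\ell)$. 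This yields $\cC_\ell = \mathrm{Com}(V_L,\cW_\ell)$.

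Next I would verify the hypotheses of Miyamoto's theorem. The ambient algebra $\cW_\ell$ is $C_2$-cofinite and rational by Arakawa \cite{ArII}. The subalgebra $V_L$ with $L = \sqrt{6\ell}\,\Z$ is a rank one positive definite lattice vertex algebra, hence rational and $C_2$-cofinite \cite{D}. Because $\cW_\ell$ is rational and $V_L$ is rational, $\cW_\ell$ decomposes as a finite direct sum of irreducible $V_L$-modules on each $L_0^{V_L}$-eigenspace; more precisely, the grading by the zero mode of $J$ realises $\cW_\ell$ as a sum of highest-weight $\cH$-modules that extend to $V_L$-modules, giving the decomposition required by Miyamoto. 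One should also check that the conformal vector of $V_L$ coincides with $T^\cH$ so that the two Virasoro actions are compatible in the sense needed; this is immediate from the choice $T^\cH = \tfrac{3}{4\ell}{:}JJ{:}$ and the fact that $J$ has square-length $6\ell$ in $L$.

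Granted these inputs, Miyamoto's theorem applies directly to the pair $V_L \subset \cW_\ell$ and yields the $C_2$-cofiniteness of $\mathrm{Com}(V_L,\cW_\ell) = \cC_\ell$. The main point where care is needed is precisely the compatibility of conformal vectors and the decomposition into $V_L$-modules; the computational content of the proof has already been carried out in Theorem \ref{thm:latticesubvoa}, where the lattice generators $(G^\pm)^{2\ell}$ were shown to be $T^\cC$-invariant, which is exactly the statement that the Virasoro element of $V_L$ coincides with $T^\cH = T - T^\cC$. Thus no further computation is required.
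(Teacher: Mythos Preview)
Your proposal is correct and follows essentially the same approach as the paper: both invoke Miyamoto's result (Corollary 2 of \cite{MiI}) on the $C_2$-cofiniteness of commutants of lattice subalgebras inside $C_2$-cofinite vertex algebras, applied to the pair $V_L \subset \cW_\ell$ provided by Theorem~\ref{thm:latticesubvoa}. The paper's proof is a single citation, whereas you carefully spell out the identification $\cC_\ell = \mathrm{Com}(V_L,\cW_\ell)$ and the verification of Miyamoto's hypotheses, but the substance is the same.
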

\begin{proof} This follows from Corollary 2 of \cite{MiI}. \end{proof}

Since $\cC^{\ell}$ has strong generators $\{T^{\cC}, U_{0,0}^{\cC}, U_{0,1}^{\cC},U_{0,2}^{\cC},U_{0,3}^{\cC},U_{0,4}^{\cC}\}$ for $\ell \neq 0,\frac{1}{2}$, it follows that for $\ell = 1,2,\dots$, $\cC_{\ell}$ is generated by the images of these fields in $\cW_{\ell}$, which we also denote by $\{T^{\cC}, U_{0,0}^{\cC}, U_{0,1}^{\cC},U_{0,2}^{\cC},U_{0,3}^{\cC},U_{0,4}^{\cC}\}$ by abuse of notation. For $\ell \geq 4$ this is a minimal strong generating set, but for $\ell = 1,2,3$ there are additional decoupling relations and a smaller generating set suffices.

Recently, Kawasetsu proved the following result in the case $\ell = 1$ \cite{Ka}.

\begin{thm} (Kawasetsu) $\cC_1$ is isomorphic to the rational Virasoro vertex algebra generated by $T^{\cC}$ of central charge $c = -3/5$.\end{thm}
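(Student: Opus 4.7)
The plan is to identify $\cC_1$ with the simple $(3,5)$ Virasoro minimal model $L(-3/5,0)$ via a module-theoretic argument that rests on the Howe-pair and $C_2$-cofiniteness results established above. First I would compute the central charge of $T^{\cC} = T - \tfrac{3}{4\ell}{:}JJ{:}$: from \eqref{bp2} a direct calculation gives
$$c(T^{\cC}) = -\frac{2\ell(6\ell-7)}{2\ell+3} - 1 = -\frac{3(2\ell-1)^2}{2\ell+3},$$
which at $\ell = 1$ equals $-3/5$, the central charge of the $(3,5)$ minimal model. Consequently there is a vertex algebra homomorphism from the universal Virasoro vacuum module at $c = -3/5$ into $\cC_1$ sending the Virasoro generator to $T^{\cC}$. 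Since $\cC_1$ is simple by the preceding lemma and $L(-3/5,0)$ is the unique simple Virasoro quotient at $c = -3/5$, this map factors through an embedding $L(-3/5,0) \hookrightarrow \cC_1$.

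To upgrade this embedding to an equality, I would use that $L(-3/5,0)$ is rational and that $\cC_1$ is $C_2$-cofinite (by the corollary following Theorem \ref{thm:latticesubvoa}), so that $\cC_1$ decomposes as a direct sum of simple $L(-3/5,0)$-modules,
$$\cC_1 \cong \bigoplus_i m_i\, L^{(i)}.$$
The simple modules in the $(3,5)$ minimal model have lowest conformal weights $h \in \{0,\, -\tfrac{1}{20},\, \tfrac{1}{5},\, \tfrac{3}{4}\}$. On the other hand, every element of $(\cW^1)^{U(1)}$ written in the normal form \eqref{standardmonomial} contains an even number of $G^{\pm}$-factors (each of weight $3/2$), so $\cC_1 \subseteq (\cW^1)^{U(1)}$ has integer conformal weights. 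Among the four Kac weights, only $h = 0$ is a non-negative integer, so only the vacuum module can occur: $\cC_1 \cong m \cdot L(-3/5,0)$ for some $m \geq 1$. Finally, $m = 1$ because $\cC_1$ inherits a one-dimensional weight-zero subspace from $\cW^1$, whereas $m$ vacuum copies would produce an $m$-dimensional weight-zero subspace. Hence $\cC_1 = L(-3/5,0)$.

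The main obstacle is justifying complete reducibility of $\cC_1$ as an $L(-3/5,0)$-module; this requires applying the standard semisimplicity theorem for grading-restricted $\N$-graded modules over a rational $C_2$-cofinite vertex algebra, and the requisite admissibility of $\cC_1$ is inherited from its $C_2$-cofiniteness and the lower boundedness of its conformal weights. Once this is in hand, the remainder of the argument is elementary combinatorics on the Kac table together with the parity observation on $(\cW^1)^{U(1)}$. As a byproduct one obtains that the five generators $U^{\cC}_{0,0},\dots,U^{\cC}_{0,4}$ of Theorem \ref{thm:generic-coset} must all vanish in $\cC_1$, since each is primary of weight in $\{3,4,5,6,7\}$ and no such weight appears in the Kac table.
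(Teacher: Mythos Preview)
Your approach differs from the paper's alternative proof. The paper verifies by direct computer calculation that each $U^{\cC}_{0,i}$ lies in $\cI_1 \cap \cC^1$ and hence vanishes in $\cC_1$, so that $\cC_1$ is strongly generated by $T^{\cC}$ alone and, being simple, must equal $L(-3/5,0)$. You instead attempt to deduce the result from the Kac table of the $(3,5)$ model via a module decomposition of $\cC_1$ over the rational Virasoro algebra.

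There is, however, a genuine gap. The inference ``$\cC_1$ is simple, hence the map from the universal Virasoro vacuum module factors through an embedding of $L(-3/5,0)$'' is not valid as stated: simplicity of a vertex algebra says nothing about simplicity of a vertex \emph{subalgebra}, and a priori the weight-$8$ singular vector of the universal Virasoro vacuum module at $c=-3/5$ could survive in $\cC_1$. This is not a technicality but the crux of your argument, because without knowing that the Virasoro subalgebra is the rational $L(-3/5,0)$ you cannot restrict attention to the four Kac-table modules---over the universal Virasoro vertex algebra there is a simple highest-weight module of every conformal weight $h\in\C$, and complete reducibility fails as well. The obstacle you single out at the end (semisimplicity of $\cC_1$ over $L(-3/5,0)$) is downstream of this more basic one. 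Closing the gap amounts to showing that the Virasoro singular vector dies in $\cC_1$, which seems to require either a computation of the same order as the paper's (verifying vanishing of a specific weight-$8$ element) or an appeal to the later Theorem~\ref{thm:typeA}.
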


An alternative proof of this result can be given as follows. One can check by computer calculation that for $i=1,2,3,4$, $U^{\cC}_{0,i} \in \cC^1$ lies in $\cI_1 \cap \cC^1$, where $\cI_1 \subset \cW^1$ denotes the maximal ideal. In particular, $U^{\cC}_{0,i} =0$ as an element of $\cC_1$, so $\cC_1$ is strongly generated by $T^{\cC}$. Being simple it must be the rational Virasoro algebra with $c=-3/5$.

Similarly, in the case $\ell = 2$, one can check that $U^{\cC}_{0,2}$ vanishes in $\cC_2$, and that $U^{\cC}_{0,3}$ and $U^{\cC}_{0,4}$ can be expressed as normally ordered polynomials in $\{T^{\cC}, U^{\cC}_{0,0}, U^{\cC}_{0,1}\}$. Since there are no relations of weight less than $5$, it follows that $\cC_{2}$ is of type $\cW(2,3,4)$ and has a minimal strong generating set $\{T^{\cC}, U_{0,0}^{\cC}, U_{0,1}^{\cC}\}$. In the case $\ell = 3$, one checks that $U_{0,4}^{\cC}$ can be expressed as a normally ordered polynomial in $\{T^{\cC}, U_{0,i}^{\cC}|\ i=0,1,2,3\}$. Since there are no relations of weight less than $6$, $\cC_3$  is of type $\cW(2,3,4,5,6)$. Finally, for $\ell >3$, the element of $\cI_{\ell} \cap \cC^{\ell}$ of minimal weight is $(G^+ \circ_1)^{2\ell + 1}  ((G^-)^{2\ell +1})$, which has weight $2\ell+1$. Therefore none of the generators of $\cC_{\ell}$ decouple, so $\cC_{\ell}$ is of type $\cW(2,3,4,5,6,7)$.

In this paper our focus is on $\cC_{\ell}$ when $\ell$ is a positive integer, but there are other interesting levels as well. For example,
\begin{lemma}
Let $\cA(3)$ be the rank three symplectic fermion algebra. Then
$$\cA(3)^{GL(3)}\cong \mathcal \cC_{\ell},\qquad \ell = -1/2.$$
\end{lemma}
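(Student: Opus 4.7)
The plan is to establish that $\cA(3)^{GL(3)}$ and $\cC_{-1/2}$ are both simple vertex algebras of type $\cW(2,3,4,5,6,7)$ with central charge $c=-6$, and then match them via their OPE algebras. First, I would verify that the central charges agree: at $\ell=-1/2$ the Bershadsky-Polyakov Virasoro $T$ has central charge $c_T=-5$ by \eqref{bp2}, so $T^{\cC}=T-T^{\cH}$ has central charge $c=-6$; the three pairs of weight-one symplectic fermions making up $\cA(3)$ also yield $c=-6$, inherited by $\cA(3)^{GL(3)}$. A direct check shows that $\ell=-1/2$ is not a root of any of the quadratic polynomials listed in Theorem \ref{thm:generic-coset}, so $\cC^{-1/2}$ has a primary strong generating set of type $\cW(2,3,4,5,6,7)$.

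To analyze $\cA(3)^{GL(3)}$, write the odd generators of $\cA(3)$ as $e^i, f^i$ for $i=1,2,3$ with OPE $e^i(z)f^j(w)\sim\delta_{ij}(z-w)^{-2}$, on which $GL(3)$ acts by the defining representation on the $e^i$ and the dual on the $f^i$, and consider the good increasing filtration by normally ordered product length. The associated graded is a supercommutative differential superalgebra, and by the first fundamental theorem of classical invariant theory for $GL(3)$ in the super setting, its $GL(3)$-invariants are generated as a differential algebra by the contractions
\begin{equation*}
w_m = \sum_{i=1}^3 e^i \partial^m f^i, \qquad m\geq 0,
\end{equation*}
of conformal weight $m+2$, with $w_0$ a scalar multiple of the Virasoro field. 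The reconstruction lemma (the analogue of Lemma \ref{reconlem}) then lifts the $w_m$ to strong generators $W_m\in\cA(3)^{GL(3)}$. Mimicking the proof of Theorem \ref{u1invariant}, the second fundamental theorem of invariant theory provides a minimal determinantal relation among the $w_m$ in weight eight; lifting this relation to $\cA(3)^{GL(3)}$ and computing the coefficient of $W_5$ via the symplectic-fermion analogue of the $C_n$ calculation from the proof of Theorem \ref{u1invariant} should yield a decoupling relation $W_5=P_1(W_0,\ldots,W_4)$, and iterating this construction will produce decoupling relations for all $W_m$ with $m\geq 5$. Hence $\cA(3)^{GL(3)}$ is of type $\cW(2,3,4,5,6,7)$.

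Finally, I would construct a vertex algebra map $\cC^{-1/2}\to\cA(3)^{GL(3)}$ by sending the primary generators $T^{\cC}, U^{\cC}_{0,0},\ldots,U^{\cC}_{0,4}$ of $\cC^{-1/2}$ furnished by Theorem \ref{thm:generic-coset} to primary combinations of the $W_m$ of matching conformal weight. A finite list of OPE structure constants --- enough to determine all generator-generator OPEs in a type $\cW(2,3,4,5,6,7)$ algebra with given central charge and primarity data --- must be checked to match on the two sides; this is a mechanical computation. Since $\cA(3)^{GL(3)}$ is simple (as the fixed-point subalgebra of the simple vertex superalgebra $\cA(3)$ under the reductive group $GL(3)$) and $\cC_{-1/2}$ is simple, the map will descend to the desired isomorphism $\cC_{-1/2}\cong\cA(3)^{GL(3)}$. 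The principal obstacle is the coefficient verification in the second step: although the overall shape parallels the proof of Theorem \ref{u1invariant}, the relevant coefficients in the symplectic fermion setting must be computed afresh, and their non-vanishing at weight eight and inductively at all higher weights is the essential input for the type $\cW(2,3,4,5,6,7)$ conclusion.
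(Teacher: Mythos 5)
Your central-charge check and the verification that $\ell=-1/2$ avoids the excluded values in Theorem \ref{thm:generic-coset} are fine, and your invariant-theory analysis of $\cA(3)^{GL(3)}$ is essentially the content of \cite{CLII}, which the paper simply cites (together with \cite{DM} for simplicity of the orbifold), so that part could be repaired by citation rather than by redoing the decoupling computation you leave open. The genuine gap is in your last step: the entire content of the lemma is the assertion that the OPE algebras of $\cC^{-1/2}$ and $\cA(3)^{GL(3)}$ coincide, and you defer exactly this to an unperformed ``mechanical computation,'' offering no reason why the structure constants should agree. Matching central charge, generator weights and primarity does not pin down a $\cW(2,3,4,5,6,7)$ OPE algebra --- note, for instance, that $\cC^{5/2}$ also has central charge $-6$ --- so your plan stands or falls on a large explicit OPE comparison (all generator--generator products involving fields up to weight $7$ on both sides) that is never carried out; likewise the nonvanishing of the weight-$8$ and higher decoupling coefficients on the fermionic side is acknowledged but not established. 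As written, the proposal is a strategy whose decisive verifications are missing.

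The paper avoids both computations by an interpolation argument: by Theorem \ref{FScosetreal}, $\cC^{\ell}$ is realized for generic $k$ as $\text{Com}(V_{k'}(\gg\gl_3), V_{k'}(\gs\gl(3|1)))$ with $(k+2)(k'+2)=1$ and $\ell=k+3/2$, and by \cite{CLI} (using the deformable families of \cite{CLIII}) the $k'\to\infty$ limit of this coset --- which corresponds to $k\to -2$, i.e.\ $\ell\to -1/2$ --- is precisely $\cA(3)^{GL(3)}$. Hence the two algebras have the same OPE algebra with no structure constants to check, and since $\cA(3)^{GL(3)}$ is simple by \cite{DM} while $\cC_{-1/2}$ is the simple quotient of $\cC^{-1/2}$, the isomorphism follows. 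To make your route work you would have to either actually perform the OPE comparison (and the fermionic decoupling computation, or cite \cite{CLII} for it), or import such a limit argument identifying the two OPE algebras conceptually.
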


\begin{proof}
$\cA(3)^{GL(3)}$ is of type $\cW(2, 3, 4, 5, 6,7)$ \cite{CLII} and as an orbifold of a simple vertex algebra it is itself simple \cite{DM}. Using the notion of deformable family of vertex algebras introduced in \cite{CLIII}, it has been proven \cite{CLI} that
$$\lim_{k'\rightarrow \infty }\text{Com}(V_{k'}(\gg\gl_3), V_{k'}(\gs\gl(3|1)))\cong \cA(3)^{GL(3)}.$$
By Theorem \ref{FScosetreal}, both $\cA(3)^{GL(3)}$ and $\cC^{-1/2}$ have the same OPE algebra, and hence their simple quotients must coincide. 
\end{proof}

\section{$\cW$-algebras of type $A$} \label{sect:WtypeA}
In this section, we prove our main result that for all $\ell \geq 1$, $\cC_{\ell}$ is isomorphic to the principal, rational $\cW(\gs\gl_{2l})$-algebra with central charge $c=-3(2\ell-1)^2/(2\ell +3)$, which has level $(2\ell+3)/(2\ell+1)-2\ell$. In the case $\ell = 2$, using the fact that $\cC_2$ is of type $\cW(2,3,4)$, one can verify by computer calculation that the OPE relations of the generators coincide with the OPE relations of $\cW(\gs\gl_4)$, which appear explicitly in \cite{CZh}. However, for $\ell >2$ it is not practical to check this directly, and a more conceptual approach is needed.

Let $\cV^\ell$ be the universal $\cW(\gs\gl_{2\ell})$ at level $(2\ell+3)/(2\ell+1)-2\ell$ and let $\cV_\ell$ be its simple quotient. 
 Its rationality has been proven \cite{ArV}.
 Simple modules of $\cV_\ell$ are labelled by integrable positive  weights of $\widehat{\gs\gl}_{2\ell}$ at level $3$. 
 We denote the fundamental weights of $\widehat{\gs\gl}_{2\ell}$ by $\Lambda_0, \Lambda_1, \dots, \Lambda_{2\ell-1}$.
By Theorem 4.3 of \cite{FKW} (see
 also \cite{AvE}) $\cV_\ell$ has a simple current $\mathbb{L}_{3\Lam_1}$ of order $2\ell$ and both $\mathbb{L}_{3\Lam_1}$ and its inverse $\mathbb{L}_{3\Lam_{2\ell-1}}$ are generated as $\cV_\ell$ modules by a single lowest-weight state of conformal dimension $\frac{3}{2}-\frac{3}{4\ell}$. Further the conformal dimension of the lowest-weight-state of both $\mathbb{L}_{3\Lam_s}$ and $\mathbb{L}_{3\Lam_{2\ell-s}}$ for $1\leq s\leq\ell$ is $\frac{3s}{2}-\frac{3s^2}{4\ell}$.
Let $L=\sqrt{6\ell}\Z$ and $V_L$ the lattice vertex algebra associated to $L$. Then $V_L$ has a simple current $V_{L+\frac{3}{\sqrt{6\ell}}}$ of order $2\ell$ with lowest-weight state of conformal dimension $\frac{3}{4\ell}$. It follows that 
\begin{equation}\label{eq:decomposition}
\cB_\ell:= \bigoplus_{s=0}^{2\ell-1}\mathbb{L}_{3\Lam_s} \otimes V_{L+\frac{3s}{\sqrt{6\ell}}}
\end{equation}
is a simple current extension of $\cV_\ell\otimes V_L$. A priori it is not clear that $\cB_\ell$ is a VOA, but one purpose of \cite{CKL} was to develop a framework (extending earlier works \cite{C, LLY}) that decides this type of question.
\begin{lemma}
$\cB_\ell$ can be given the structure of a VOA
such
that the natural map
$\cV_\ell\otimes V_L\hookrightarrow \cB_\ell$ is a conformal embedding.
\end{lemma}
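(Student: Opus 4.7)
The plan is to apply the simple-current extension theorem of \cite{CKL} (building on \cite{C, LLY}). That result states that for a rational VOA $\cU$ and a finite cyclic group $G = \langle J \rangle$ of simple $\cU$-modules that are simple currents under fusion, the direct sum $\bigoplus_{s=0}^{|G|-1} J^s$ carries a (possibly $\tfrac{1}{2}\Z$-graded) vertex algebra structure extending $\cU$ provided the single integrality condition $|G|\cdot h_J \in \Z$ holds, where $h_J$ is the conformal weight of the lowest-weight vector of $J$. In such a setting the conformal embedding $\cU \hookrightarrow \bigoplus_s J^s$ is automatic, since the stress-energy tensor must sit in the identity component, and once the weights of the simple-current summands are fixed, no other choice of Virasoro element is available.

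In our setting $\cU = \cV_\ell \otimes V_L$, which is rational and $C_2$-cofinite by \cite{ArV} together with standard lattice VOA theory, so its module category is the Deligne product of two modular tensor categories and hence itself modular. The relevant simple current is
$$J := \mathbb{L}_{3\Lam_1} \otimes V_{L + 3/\sqrt{6\ell}},$$
whose powers $J^s = \mathbb{L}_{3\Lam_s} \otimes V_{L + 3s/\sqrt{6\ell}}$ generate a cyclic group of order $2\ell$: both factors individually have order $2\ell$, and the parametrisation of the summands of $\cB_\ell$ in \eqref{eq:decomposition} matches this $\Z/2\ell\Z$-grading exactly. Hence $\cB_\ell$ is precisely the simple-current extension of $\cV_\ell \otimes V_L$ by $\langle J \rangle$ in the sense of \cite{CKL}.

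It remains to verify the integrality condition. The conformal weight of the lowest-weight vector of $J$ is
$$h_J = \left(\frac{3}{2} - \frac{3}{4\ell}\right) + \frac{1}{2}\left(\frac{3}{\sqrt{6\ell}}\right)^2 = \frac{3}{2},$$
the essential point being that the $V_L$ contribution exactly cancels the $1/\ell$-term coming from $\cV_\ell$; this is precisely why the lattice $L = \sqrt{6\ell}\Z$ was chosen at the outset. Consequently $2\ell \cdot h_J = 3\ell \in \Z$, so the integrality hypothesis of \cite{CKL} holds and we obtain the desired vertex algebra structure on $\cB_\ell$. Note that $h_J = 3/2$ is only a half-integer, so $\cB_\ell$ is $\tfrac{1}{2}\Z$-graded, consistent with the fact that $\cW^\ell$ itself has half-integer weight generators $G^\pm$.

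The hardest step to justify is the triviality of the monodromy of $\langle J \rangle$ on itself --- this is what makes the extension associative and commutative up to the required signs. The virtue of the CKL framework is that it packages this categorical obstruction into the single integrality condition verified above; the bulk of the remaining work (checking cocycles, intertwiner compatibilities, associativity of the extended product) is then handled abstractly by that theorem rather than by direct computation.
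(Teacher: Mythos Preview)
Your overall strategy---invoking the simple-current extension machinery of \cite{CKL}---is exactly the paper's, and your computation $h_J = 3/2$ is correct and relevant. But there is a genuine gap: the condition you check only guarantees that $\cB_\ell$ carries \emph{some} vertex-(super)algebra structure, whereas the lemma asserts that $\cB_\ell$ is a \emph{VOA}, i.e.\ that the simple-current summands carry \emph{even} parity. Since $h_J = 3/2$ is a half-integer, the generator $J$ could a priori be odd, in which case $\cB_\ell$ would be a genuine vertex \emph{super}algebra (and the subsequent comparison with $\cW_\ell$, whose $G^{\pm}$ are even bosonic fields, would fail). Your final paragraph acknowledges the half-integer grading but does not address this parity question.

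The paper fills exactly this gap: by Corollary~2.8 and Theorem~3.12 of \cite{CKL}, in a modular tensor category the parity of the extension is determined by the categorical dimension of the generating simple current together with its twist. Lattice simple currents always have $\text{qdim}=1$, so one must compute $\text{qdim}(\mathbb{L}_{3\Lambda_1})$. Using \cite{FKW} the paper finds $\text{qdim}(\mathbb{L}_{3\Lambda_1}) = (-1)^{2(\Lambda_1|\rho)} = (-1)^{2\ell-1} = -1$, and it is precisely the combination of $\text{qdim}=-1$ with the half-integer twist that forces the extension to be a VOA rather than a super VOA. You should add this quantum-dimension computation; without it the conclusion ``VOA'' is not justified. (As a secondary point, your summary of the \cite{CKL} criterion as the single condition ``$|G|\cdot h_J\in\Z$'' is an oversimplification and should be stated more carefully.)
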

\begin{proof}
This follows from \cite{CKL}:
a simple current extension is either a VOA or a super VOA by Theorem 3.12 of \cite{CKL}, and if the tensor category of the underlying VOA is modular, the parity question is decided by the categorical dimension of the generating simple current by Corollary 2.8 of \cite{CKL}. This follows directly from Huang's work on the Verlinde formula \cite{HuII}, and is explained in \cite{CKL}. Since a lattice VOA is a unitary VOA, any of its simple currents has categorical dimension one, so we have to determine the categorical dimension of $\mathbb{L}_{3\Lam_1}$. By Proposition 4.2  together with Proposition 1.1  of \cite{FKW} the categorical dimension is given by
\[
\text{qdim}(\mathbb{L}_{3\Lam_1}) = \frac{S_{3\Lambda_1, 3\Lambda_0}}{S_{3\Lambda_0, 3\Lambda_0}}=\epsilon(3\Lambda_1) = (-1)^{2(\Lambda_1|\rho)}.
\]
Since $n=2\ell$ and $\alpha_1, \dots, \alpha_{n-1}$ are the simple roots of $\gs\gl_{n}$,
\[
\Lambda_1 = \frac{1}{n} \left( (n-1) \alpha_1 +(n-2) \alpha_2 + \dots + \alpha_{n-1}\right),
\] 
so that 
\[
(\Lambda_1 | \rho) = \frac{1}{n} \frac{n(n-1)}{2} = \frac{(n-1)}{2}.
\]
It follows that qdim$(\mathbb{L}_{3\Lam_1})=(-1)^{(n-1)}=-1$ since $n=2\ell$ is even. 
Odd quantum dimension and odd twist (half-integer conformal dimension) of a simple current imply that the extension is a VOA by Corollary 2.8 together with Theorems 3.9 and 3.12 of \cite{CKL}.
\end{proof}
We would like to show that $\cB_\ell\cong\cW_\ell$. Denote the vertex operator associated to the vector of lowest conformal dimension of $\mathbb{L}_{3\Lam_1} \otimes V_{L+\frac{3}{\sqrt{6\ell}}}$ by $F^+$ and the one associated to the vector of lowest conformal dimension of $\mathbb{L}_{3\Lam_{2\ell-1}} \otimes V_{L+\frac{6\ell-3}{\sqrt{6\ell}}}$ by $F^-$.

\begin{lemma}
The subalgebra of $\cB_\ell$ generated under operator product algebra by $F^\pm$ has the same OPE algebra as $\cW^\ell$. 
\end{lemma}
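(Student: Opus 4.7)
The strategy is to exhibit four fields in $\cB_\ell$ --- the Heisenberg current $J$ of $V_L$ (suitably normalized), the total Virasoro $T = T^{\cV_\ell} + T^{V_L}$, and the extension fields $F^\pm$ --- and to verify, after rescaling $F^\pm$ by a scalar, that they satisfy the defining OPEs \eqref{bp1}--\eqref{bp5} of $\cW^\ell$. First, normalize the Heisenberg so $J(z)J(w) \sim (2\ell/3)(z-w)^{-2}$; the lattice $L=\sqrt{6\ell}\Z$ was chosen precisely so that under this normalization the lowest-weight vectors of $V_{L \pm 3/\sqrt{6\ell}}$ carry $J$-charge $\pm 1$, giving the $J$--$F^\pm$ relations of \eqref{bp1}. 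The central charge of $T$ is $1 - 3(2\ell-1)^2/(2\ell+3) = -2\ell(6\ell-7)/(2\ell+3)$, matching \eqref{bp2}, and \eqref{bp4} is standard. Since the lowest-weight vectors of the simple-current modules $\mathbb{L}_{3\Lam_{\pm 1}}$ (primary for $T^{\cV_\ell}$ of weight $3/2 - 3/(4\ell)$) and of the shifted lattice modules $V_{L \pm 3/\sqrt{6\ell}}$ (primary for $T^{V_L}$ of weight $3/(4\ell)$) are primary, $F^\pm$ is primary of total weight $3/2$, establishing \eqref{bp3}.

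Next, I would prove $F^\pm(z) F^\pm(w) \sim 0$ by a conformal-weight argument: the OPE $F^+(z)F^+(w)$ takes values in $\mathbb{L}_{3\Lam_2} \otimes V_{L + 6/\sqrt{6\ell}}$, whose minimum conformal weight is $(3 - 3/\ell) + 3/\ell = 3$, whereas each singular coefficient $F^+ \circ_k F^+$ ($k \geq 0$) has weight $2 - k \leq 2 < 3$; the same argument handles $F^-(z)F^-(w)$. For the OPE $F^+(z)F^-(w)$, charge cancellation in the simple-current extension places the result in $\cV_\ell \otimes V_L$. Since $\cV_\ell$ is of type $\cW(2,3,\dots,2\ell)$ and lattice vectors in $V_L$ have weight at least $3\ell \geq 3$, the subspace of $\cV_\ell \otimes V_L$ of $J$-charge zero and conformal weight at most $2$ is spanned by $1$, $J$, $T$, $:JJ:$, and $\partial J$. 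Hence the singular part of $F^+(z)F^-(w)$ has exactly the structural form of the right-hand side of \eqref{bp5}:
\begin{equation*}
F^+(z)F^-(w) \sim A(z-w)^{-3} + B J(w)(z-w)^{-2} + \bigl(C T + D :JJ: + E \partial J\bigr)(w)(z-w)^{-1}.
\end{equation*}

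It remains to pin down the coefficients. A single rescaling of $F^\pm$ normalizes $A$ to $\ell(2\ell - 1)$. The remaining four coefficients are then determined by the Borcherds (Jacobi) identities relating $J$, $T$, $F^+$, and $F^-$: associativity of the threefold products $J(z_1)F^+(z_2)F^-(w)$ and $T(z_1)F^+(z_2)F^-(w)$, combined with the primary property of $F^\pm$ and the already-matched Heisenberg level and Virasoro central charge, uniquely force them to coincide with the coefficients in \eqref{bp5}. Equivalently, the universal vertex algebra freely generated by fields $J, T, G^\pm$ of weights $1, 2, 3/2, 3/2$ satisfying \eqref{bp1}--\eqref{bp4}, $G^\pm G^\pm \sim 0$, and the structural form above for $G^+ G^-$, is rigidly determined as a one-parameter family once the Heisenberg level and Virasoro central charge are specified.

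The main obstacle is this final step. The general form of the $F^+ F^-$ OPE is essentially forced by weight and charge considerations, but rigorously verifying the $\ell$-dependent coefficients $\tfrac{3}{2}(2\ell-1)$, $-(\ell + 3/2)$, $3$, and $\tfrac{3}{4}(2\ell-1)$ either demands an explicit evaluation of $F^+ \circ_k F^-$ for $k = 0,1,2$ as states in $\cV_\ell \otimes V_L$ via the tensor-category structure of the simple-current extension, or an appeal to a uniqueness statement for the $\cW^\ell$ family at fixed central charge and Heisenberg level.
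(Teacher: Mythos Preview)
Your outline is correct and follows the same strategy as the paper: normalize the Heisenberg, match central charges, use conformal weights to force $F^\pm F^\pm\sim 0$, identify the possible terms in $F^+F^-$ by weight and charge, then pin down the coefficients. The paper does exactly this, but with one simplification that dissolves your ``main obstacle.''

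The point you are missing is that $F^\pm$ factors as a tensor product of a lowest-weight vector in $\mathbb{L}_{3\Lambda_{\pm 1}}$ with a lattice vertex operator in $V_{L\pm 3/\sqrt{6\ell}}$. The lattice part of the OPE $F^+(z)F^-(w)$ is therefore \emph{explicitly known}: it contributes the factor
\[
1+\tfrac{3}{2\ell}J(w)(z-w)+\tfrac{1}{2}\Bigl(\bigl(\tfrac{3}{2\ell}\bigr)^2:JJ:(w)+\tfrac{3}{2\ell}\partial J(w)\Bigr)(z-w)^2+\cdots,
\]
while the $\cV_\ell$-part, having no weight-$1$ fields, contributes only a scalar $a$ at the third-order pole and a multiple $bS$ of the $\cV_\ell$-Virasoro at the first-order pole. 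Thus your five unknowns $A,B,C,D,E$ collapse to two: the coefficients of $J$, $:JJ:$, and $\partial J$ are all $a$ times the lattice-determined constants above, and once you rescale so that $a=\ell(2\ell-1)$ they match \eqref{bp5} on the nose (after rewriting $T=T^{\cC}+\tfrac{3}{4\ell}:JJ:$). Only the single coefficient $b$ of $S$ remains, and the paper fixes it by one Jacobi identity for the triple $(S,F^+,F^-)$.

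Two smaller points. First, the rescaling step requires $a\neq 0$; the paper invokes Proposition~4.1 of \cite{CKL} (nondegeneracy of the pairing in a simple-current extension) for this, and you should too. Second, the paper works throughout with the $\cV_\ell$-Virasoro $S$ (playing the role of $T^{\cC}$) rather than the total Virasoro; this is what makes the comparison with the commutation relations of $\cW^\ell$ in the $(J,T^{\cC})$-basis immediate.
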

\begin{proof}
Let 
\[
J(z) =\sum_{n\in \Z } J_n z^{-n-1}, \qquad G^\pm(z) =\sum_{n\in \Z } G^\pm_n z^{-n-3/2}, \qquad
T^{\cC}(z) =\sum_{n\in \Z } L^c_n z^{-n-2}  
\]
be the mode expansion of the corresponding fields of $\cW^\ell$. Here $T^{\cC}$ denotes the Virasoro field of $\cC^\ell$. It is related to the Virasoro field of $\cW^\ell$ by $T=T^{\cC}+ \frac{4}{3\ell} (J)^2$ where $(J)^2$ denotes the normal ordered product of $J$ with itself. 
We can thus read off the commutation relations of the mode algebra from the operator product algebra of the fields given in Section 3, see also \cite{ArV}. 
These are 
\begin{equation}\nonumber
\begin{split}
[J_m, J_n] &= \frac{2\ell}{3} \delta_{n+m, 0}\\
[J_m, G^\pm_n] &= \pm G^\pm_{n+m} \\
[G^+, G^-]&= \frac{9}{4\ell} \left(\ell-\frac{1}{2}\right) (J^2)_{n+m} +\frac{3}{2}  \left(\ell-\frac{1}{2}\right)\left(m-n-1\right) J_{m+n}
-\left(\ell+\frac{3}{2}\right) T^{\cC}_{n+m} +\\
&\quad \ell \left(\ell-\frac{1}{2}\right) m(m+1) \delta_{n+m, 0}.
\end{split}
\end{equation}
Here we omit the Virasoro algebra relations as they are standard. 
We want to show that the modes of the fields $F^\pm$ together with those of the Heisenberg field $H$ of $V_L$ and the Virasoro field $S$ of $V_\ell$ have the same commutation relations. 
Let 
\[
H(z) =\sum_{n\in \Z } H_n z^{-n-1}, \qquad F^\pm(z) =\sum_{n\in \Z } F^\pm_n z^{-n-3/2}, \qquad
S(z) =\sum_{n\in \Z } S_n z^{-n-2}  
\]
be the mode expansion of the corresponding fields.
We can normalize $H$, such that 
\[
H(z) F^\pm(w) \sim  \pm (z-w)^{-1} F^\pm(w), 
\]
with this normalization the operator product of $H$ with itself is
\[
H(z) H(w) \sim \frac{2\ell}{3} (z-w)^{-2}. 
\]
The following commutation relations follow
\[
[H_m, H_n] = \frac{2\ell}{3} \delta_{n+m, 0},\qquad
[H_m, F^\pm_n] = \pm F^\pm_{n+m}.
\]
Since the state of lowest conformal dimension of both $\mathbb{L}_{3\Lam_2} \otimes V_{L+\frac{6}{\sqrt{6\ell}}}$ and
$\mathbb{L}_{3\Lam_{2\ell -2}} \otimes V_{L+\frac{6\ell-6}{\sqrt{6\ell}}}$ has dimension three the operator product of $F^+$ with itself as well as the one of $F^-$ with itself is regular. The operator products involving the Virasoro fields are of course the same as the corresponding ones of $\cW^\ell$. It remains to compute the operator product of $F^+$ with $F^-$. The known operator product algebra of lattice VOAs leaves us with two free coefficients $a, b$ for the moment:
\begin{equation}\nonumber
\begin{split}
F^+(z) F^-(w) &\sim (z-w)^{-3} a \left( 1 +\frac{3}{2\ell} (z-w)H + (z-w)^2 \left( \frac{1}{2} \left(\frac{3}{2\ell}\right)^2 (H^2)+\frac{1}{2} \frac{3}{2\ell}\partial H\right)\right) + \\
&\quad b(z-w)^{-1} S.
\end{split}
\end{equation}
We will see that these coefficients are uniquely determined by the Jacobi identity for the Lie algebra of the modes. 
First, $a\neq 0$ by Proposition 4.1 of \cite{CKL}. We can now rescale the fields $F^\pm$ so that $a=2\ell (\ell-\frac{1}{2})$. 
It remains to show that $b=-(\ell+\frac{3}{2})$. Set $b=-(\ell+\frac{3}{2})+\lambda$. Then using that the Jacobi identity of $\cW^\ell$ holds we get
\begin{equation}\nonumber
\begin{split}
0 &= [R_n, [F^+_m, F^-_r]] +[F^+_m, [F^-_r, R_n]] + [F^-_r, [R_n, F^+_m]]
= \lambda \frac{c_\ell}{12} (n^3-n) \delta_{n+m+r, 0} .  
\end{split}
\end{equation}
Hence $\lambda=0$. We thus have shown that the mode algebra of $\cW^\ell$ and the VOA generated by $F^\pm$ coincide, hence their operator product algebras coincide.
\end{proof}
\begin{thm}\label{thm:typeA}
The coset $\cC_\ell$ is isomorphic to the simple rational $\cW$-algebra of $\gs\gl_{2\ell}$ , $\cW(\gs\gl_{2\ell})$ at level $(2\ell+3)/(2\ell+1)-2\ell$.
\end{thm}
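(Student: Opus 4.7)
The plan is to use the preceding lemma to embed $\cW_\ell$ inside $\cB_\ell$, show that this embedding is surjective, and then read off the theorem by computing the Heisenberg coset on both sides. Concretely, the lemma produces a vertex algebra homomorphism $\phi: \cW^\ell \to \cB_\ell$ with $\phi(J)=H$, $\phi(T^{\cC})=S$, and $\phi(G^\pm)=F^\pm$; let $\tilde\cW \subset \cB_\ell$ denote its image.

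The first step is to identify $\tilde\cW$ with the simple quotient $\cW_\ell$. By \cite{ArII} the maximal proper ideal $\cI_\ell \subset \cW^\ell$ is generated by $(G^\pm)^{2\ell+1}$, so it suffices to verify that $(F^\pm)^{2\ell+1}=0$ in $\cB_\ell$. This follows from a conformal weight bound: $(F^+)^{2\ell+1}$ has $H$-charge $2\ell+1$ and therefore lies in the summand $\mathbb{L}_{3\Lambda_1}\otimes V_{L+3/\sqrt{6\ell}}$ of \eqref{eq:decomposition}. Combining the conformal weight $3/2-3/(4\ell)$ of the lowest-weight vector of $\mathbb{L}_{3\Lambda_1}$ with the conformal weight $3(2\ell+1)^2/(4\ell)$ of the unique lattice vector of $H$-charge $2\ell+1$ in $V_{L+3/\sqrt{6\ell}}$ gives a minimum conformal weight of $3\ell+9/2$ in that $H$-charge subspace, which strictly exceeds the actual conformal weight $3\ell+3/2$ of $(F^+)^{2\ell+1}$. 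Hence $(F^+)^{2\ell+1}$ vanishes, and symmetrically so does $(F^-)^{2\ell+1}$. By maximality of $\cI_\ell$, we conclude $\ker\phi = \cI_\ell$ and $\tilde\cW \cong \cW_\ell$.

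The main obstacle is the second step: showing $\tilde\cW = \cB_\ell$. The natural approach is to match the bi-graded characters with respect to $(L_0,H_0)$ on both sides. The character of $\cB_\ell$ can be computed from \eqref{eq:decomposition} using the characters of the simple $\cV_\ell$-modules $\mathbb{L}_{3\Lambda_s}$ (via \cite{FKW,ArV}) together with the standard theta-function characters of the lattice modules $V_{L+3s/\sqrt{6\ell}}$. The character of $\cW_\ell$ is available from Arakawa's work \cite{ArII}. Matching the two characters---equivalent to a family of nontrivial theta-function identities, which are themselves of independent interest and motivate Section \ref{sect:character}---forces equality on graded subspaces, and hence $\tilde\cW=\cB_\ell$.

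Granting that, the theorem follows from
$$
\cC_\ell \;=\; \text{Com}(\cH,\cW_\ell) \;=\; \text{Com}(\cH,\cB_\ell) \;=\; \cV_\ell.
$$
For the last equality: among the summands in \eqref{eq:decomposition}, only the $s=0$ summand has $H$-charges in $2\ell\mathbb{Z}$, so the $H$-charge-zero subspace of $\cB_\ell$ equals $\cV_\ell\otimes\cH$; the commutant of $\cH$ inside $\cV_\ell\otimes\cH$ is $\cV_\ell$.
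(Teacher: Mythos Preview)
Your step 2 is the gap. You reduce the theorem to the assertion that $\on{ch}\cW_\ell = \on{ch}\cB_\ell$, but you do not prove this; you write ``Granting that'' and move on. This character equality is equivalent to a family of theta identities which you would have to establish independently. In fact, in the paper these identities appear in Section~\ref{sect:character} as a \emph{consequence} of Theorem~\ref{thm:typeA}, so invoking them to prove the theorem would be circular unless you supply a separate proof.

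The paper's argument avoids this entirely and is much shorter. It does not attempt to show $\tilde\cW=\cB_\ell$, nor does it need your step~1 identifying $\tilde\cW$ with $\cW_\ell$. Instead it observes that the image of $\cC^\ell$ under $\phi$ lands in $\text{Com}(\cH,\cB_\ell)=\cV_\ell$ (this is your final paragraph) and contains the weight~$2$ and weight~$3$ fields of $\cV_\ell$. The key structural input is that the principal $\cW$-algebra $\cV_\ell$ is already \emph{generated under OPE} by its weight~$2$ and weight~$3$ fields, so the image of $\cC^\ell$ is all of $\cV_\ell$. Since $\cV_\ell$ is simple, the surjection $\cC^\ell\twoheadrightarrow\cV_\ell$ factors through the simple quotient, giving $\cC_\ell\cong\cV_\ell$. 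Your conformal-weight argument for $(F^\pm)^{2\ell+1}=0$ is correct and pleasant, but it is not needed for this route; nor is surjectivity of $\phi$ onto $\cB_\ell$.
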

\begin{proof}
By the previous lemma, $\cB_\ell$ is a module for $\cW^\ell$ and it contains a homomorphic image of $\cW^\ell$ and hence the generators of weights $2, 3, 4, 5, 6, 7$ that strongly generate the corresponding homomorphic image of $\cC^\ell$. But under operator products, already the dimension two and three fields of $\cV_\ell$ generate $\cV_\ell$, hence  the homomorphic image of $\cC^\ell$ must be $\cV_\ell$. Since it is simple, we have $\cC_\ell \cong \cV_\ell$.
\end{proof}
\begin{cor}
$\cW_\ell$ is rational and $C_2$-cofinite.
\end{cor}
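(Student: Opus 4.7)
The plan is to deduce the corollary by upgrading the argument of Theorem \ref{thm:typeA} to an isomorphism $\cW_\ell \cong \cB_\ell$, and then transferring rationality and $C_2$-cofiniteness from $\cB_\ell$.

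First I would strengthen the output of Theorem \ref{thm:typeA}. The previous lemma produces a vertex algebra homomorphism $\phi : \cW^\ell \to \cB_\ell$ whose image contains $F^\pm, H$ and $S$. Theorem \ref{thm:typeA} already shows that $\phi(\cC^\ell) = \cV_\ell$, and by construction $\phi(\cH) $ is the Heisenberg subalgebra of $V_L$ generated by $H$. To see that $\phi$ is surjective, note that by the simple current decomposition \eqref{eq:decomposition} it suffices to produce, inside the image, a nonzero element in each summand $\mathbb{L}_{3\Lam_s}\otimes V_{L+3s/\sqrt{6\ell}}$. Iterated OPE products of $F^+$ (and of $F^-$) land exactly in the tensor powers of the generating simple current, so $(F^+)^{\circ s}$ hits $\mathbb{L}_{3\Lam_s}\otimes V_{L+3s/\sqrt{6\ell}}$ nontrivially; in particular for $s=2\ell$ one lands back in $\cV_\ell\otimes V_L$ in the sector of lattice charge $\sqrt{6\ell}$, which together with $\phi(\cH)=$ Heisenberg of $V_L$ and $\phi(\cC^\ell)=\cV_\ell$ exhibits all of $V_L$ in the image. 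Hence $\phi$ is surjective. Since $\cB_\ell$ is simple (being a simple current extension of a simple VOA, by the framework of \cite{CKL}), $\ker\phi$ is a maximal graded ideal and must coincide with $\cI_\ell$. Therefore $\phi$ descends to an isomorphism $\cW_\ell \isomap \cB_\ell$.

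Next I would invoke known inheritance results for simple current extensions. By \cite{ArV}, $\cV_\ell$ is rational and $C_2$-cofinite; lattice VOAs associated to positive definite even lattices are rational and $C_2$-cofinite (Dong); consequently $\cV_\ell \otimes V_L$ is rational and $C_2$-cofinite. The VOA $\cB_\ell$ is obtained from $\cV_\ell \otimes V_L$ by a cyclic simple current extension of order $2\ell$ whose modular/braiding data were checked in the lemma preceding Theorem \ref{thm:typeA}, so the general extension theorems (Yamauchi for rationality of simple current extensions; Miyamoto \cite{MiI} and Carnahan--Miyamoto \cite{CM} via the associated $\mathbb Z/2\ell\mathbb Z$-orbifold picture, combined with \cite{CKL}) imply that $\cB_\ell$ is itself rational and $C_2$-cofinite. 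Transporting these properties through the isomorphism $\cW_\ell \cong \cB_\ell$ gives the corollary.

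The main obstacle I expect is the surjectivity of $\phi$: one must be certain that OPE products of $F^\pm$ together with $\cV_\ell$ and $H$ truly exhaust every simple current sector, rather than merely a proper subextension. This is guaranteed by the fact that $F^+$ is, by construction, the lowest weight vector of the generating simple current $\mathbb{L}_{3\Lam_1}\otimes V_{L+3/\sqrt{6\ell}}$ and that $\cV_\ell \otimes V_L$ acts irreducibly on each simple current summand; but the careful bookkeeping of Fusion products needs to be spelled out. Once this is done, the remainder of the corollary is a direct application of the standard rationality/$C_2$-cofiniteness transfer for simple current extensions.
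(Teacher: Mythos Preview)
Your proposal is correct and follows essentially the same approach as the paper: realize $\cW_\ell$ as a simple current extension of $\cV_\ell\otimes V_L$ and then apply Yamauchi \cite{Y}. The paper's proof is a one-liner that treats the simple-current-extension structure of $\cW_\ell$ as already established by the preceding results (Theorems \ref{thm:latticesubvoa} and \ref{thm:typeA}) without spelling out the isomorphism $\cW_\ell\cong\cB_\ell$; your surjectivity argument for $\phi$ makes this step explicit, but your additional appeals to \cite{MiI} and \cite{CM} are unnecessary since \cite{Y} alone handles both rationality and $C_2$-cofiniteness for simple current extensions of CFT-type.
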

\begin{proof}
$\cW_\ell$ is a simple current extension of a rational and $C_2$-cofinite VOA of CFT-type, hence itself has these properties \cite{Y}.
\end{proof}
In other words, we have found a very different proof of the main result of \cite{ArII}.
\begin{cor}
The simple rational $\cW$-algebra of $\gs\gl_{2\ell}$ , $\cW(\gs\gl_{2\ell})$ at level $(2\ell+3)/(2\ell+1)-2\ell$, is a $\cW$-algebra
of type $\cW(2, 3, 4, 5, 6, 7)$ for $\ell>3$. For $\ell=1$ it is of type $\cW(2)$, for $\ell=2$ it is of type $\cW(2, 3, 4)$ and for $\ell=3$ it is of type $\cW(2, 3, 4, 5, 6)$.
\end{cor}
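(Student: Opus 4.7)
The plan is to deduce the corollary essentially as a formal consequence of Theorem \ref{thm:typeA}, which already provides the vertex algebra isomorphism $\cC_\ell \cong \cW(\gs\gl_{2\ell})$ at the specified level. Under a vertex algebra isomorphism, minimal strong generating sets correspond, so it suffices to read off the type of $\cC_\ell$ from the structural analysis in Section \ref{sect:structuresimple}. All the required pieces have already been assembled earlier in the paper; the corollary is essentially a translation of those facts through the isomorphism of Theorem \ref{thm:typeA}.

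First I would handle the generic case $\ell > 3$. Theorem \ref{u1invariant} and Theorem \ref{thm:generic-coset} give that $\cC^\ell$ is strongly generated by $\{T^{\cC}, U_{0,0}^{\cC}, U_{0,1}^{\cC}, U_{0,2}^{\cC}, U_{0,3}^{\cC}, U_{0,4}^{\cC}\}$ in weights $2,3,4,5,6,7$. Passing to the simple quotient $\cC_\ell = \cC^\ell/(\cI_\ell \cap \cC^\ell)$, I would argue that for $\ell > 3$ none of these generators decouple: by the discussion following Kawasetsu's theorem, the minimum-weight element of $\cI_\ell \cap \cC^\ell$ is $(G^+\circ_1)^{2\ell+1}((G^-)^{2\ell+1})$ of weight $2\ell + 1 \geq 9 > 7$, so no relation in $\cI_\ell \cap \cC^\ell$ can express any of the six generators as a polynomial in the others. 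Hence $\cC_\ell$ retains type $\cW(2,3,4,5,6,7)$, which transports to $\cW(\gs\gl_{2\ell})$.

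For the exceptional small cases $\ell = 1, 2, 3$, I would simply cite the explicit decoupling phenomena already noted in Section \ref{sect:structuresimple}: Kawasetsu's theorem gives type $\cW(2)$ for $\ell = 1$; for $\ell = 2$ the fields $U_{0,2}^{\cC}, U_{0,3}^{\cC}, U_{0,4}^{\cC}$ can be eliminated, leaving type $\cW(2,3,4)$; and for $\ell = 3$ only $U_{0,4}^{\cC}$ decouples, leaving type $\cW(2,3,4,5,6)$. Applying Theorem \ref{thm:typeA} in each case yields the stated types for $\cW(\gs\gl_{2\ell})$.

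The argument is not technically difficult, since the heavy lifting was done in Theorem \ref{u1invariant} (finding the generic strong generating set by producing the decoupling relations from the weight-$8$ fundamental relation), in Theorem \ref{thm:typeA} (identifying $\cC_\ell$ with a principal $\cW$-algebra), and in the case-by-case computer checks for $\ell = 1,2,3$. The only point requiring care in this wrap-up is the minimality claim for $\ell > 3$: one must verify that no additional decoupling relation of weight $\leq 7$ is created when passing to the simple quotient, and this is exactly what the weight bound $2\ell + 1 \geq 9$ on the smallest element of $\cI_\ell \cap \cC^\ell$ guarantees. Everything else is bookkeeping.
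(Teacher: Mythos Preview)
Your proposal is correct and follows essentially the same approach as the paper: invoke Theorem \ref{thm:typeA} to identify $\cW(\gs\gl_{2\ell})$ with $\cC_\ell$, then read off the type of $\cC_\ell$ from the structural results of Section \ref{sect:structuresimple}. The paper's proof is a two-line summary of exactly this, while you have spelled out in detail which earlier results supply each case and why the weight bound $2\ell+1\geq 9$ prevents further decoupling for $\ell>3$.
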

\begin{proof}
We have just proven that the simple rational $\cW$-algebra of $\gs\gl_{2\ell}$, $\cW(\gs\gl_{2\ell})$ at level $(2\ell+3)/(2\ell+1)-2\ell$,
is isomorphic to $\cC_\ell$, but the latter is of the corresponding type as proven in the last section. 
\end{proof}

\section{A character identity} \label{sect:character}

We first recall the known characters of $\cW_\ell$ and $\cV_\ell$ and their simple currents $\mathbb{L}_{3\Lam_s}$.
In this section $q=e(\tau), z=e(u)$ and $\tau$ in the upper half of the complex plane, and $u\in\C$.

\subsection{Characters of rational Bershadsky-Polyakov algebras}

Recall that $\ell=k+3/2$ and $\cW_\ell=\cW_{k}(\mf{sl}_3,f_{\theta})$, the simple Bershadsky-Polyakov algebra at level
$k$.
We have
\begin{align*}
 \cW_\ell=H^{0}_{DS,f_{\theta}}(L_{k}(\mf{sl}_3))
\end{align*}
provided that $k\not\in \Z_{\geq 0}$ (\cite{ArI}).
Here $H^{0}_{DS,f_{\theta}}(?)$ is the Drinfeld-Sokolov reduction with
respect to the nilpotent element $f_{\theta}$ (\cite{KRW})
and $L_k(\mf{sl}_3)$ is the simple affine vertex algebra associate with
$\mf{sl}_3$ at level $k$.
This gives
\begin{align*}
 \on{ch} \cW_\ell:&=\on{tr}_{\cW_\ell}\left(z^{J_0}q^{L_0-\frac{c}{24}}\right)\\
&= q^{-\frac{c}{24}}\left(\left.\on{ch} L_k(\mf{sl}_3)\prod_{\alpha\in
 \hat{\Delta}_+^{re}
}(1-e^{-\alpha})\frac{1}{\prod\limits_{n\geq
 0}(1-e^{-\alpha_1-n\delta})(1-e^{-\alpha_2-n\delta})
}\right)\right\vert_{\substack{e^{-\delta}\ra q,\\
 e^{\alpha_1}\ra z^{-1}q^{-1/2},\\
e^{\alpha_2}\ra zq^{-1/2}
 }}
\end{align*}

On the other hand if  $k$ is admissible we have \cite{KWI}
\begin{align*}
\on{ch} L(k \Lam_0)=\frac{\sum\limits_{w\in \widehat{W}(k\Lam_0)}(-1)^{\ell_{k\Lam_0}(w)}e^{w\circ
k\Lam_0}}{\prod\limits_{\alpha\in \Delta_+}(1-e^{-\alpha})^{\dim \widehat{\fing}_{\alpha}}},
\end{align*}
where $\widehat{W}(k\Lam_0)$ is the integrable Weyl group of $k\Lam_0$
generated by reflections $s_{\alpha}$ such that $\bra
k\Lam_0,\alpha^{\vee}\ket \in \Z$
and $\ell_{k\Lam_0}$ is the length function of $\widehat{W}(k\Lam_0)$.
The nontrivial rational levels are $k=p/2-3=\ell-3/2$ with $p=5,7,9,\dots$ and thus $\ell=1, 2, 3, \dots$
Then 
$k\Lam_0$ is admissible 
and 
\begin{align*}
 \widehat{W}(k\Lam_0)=W\ltimes 2 Q^{\vee}(\cong \widehat{W}=W\ltimes Q^{\vee}),
\end{align*}
where $W=\mf{S}_3$ and $Q^{\vee}$ is the coroot lattice of $\fing=\mf{sl}_3$.
Thus
\begin{align*}
 \on{ch} {\cW_\ell}= \frac{1}{\eta(\tau)\vartheta(\tau, u)}
 \left(\left.\sum\limits_{w\in W\ltimes 2 Q^{\vee}}e^{w\circ
 k\Lam_0}
\right)\right\vert_{\substack{e^{-\delta}\ra q,\\
 e^{\alpha_1}\ra z^{-1}q^{-1/2},\\
e^{\alpha_2}\ra zq^{-1/2}}}
\end{align*}
where
\[
\vartheta(\tau, u) =\prod\limits_{i\geq
 1}(1-q^i)\prod_{j\geq 1}(1-zq^{1/2+j})(1-z^{-1}q^{1/2+j}),\qquad
 \eta(\tau) =q^{\frac{1}{24}}\prod\limits_{i\geq
 1}(1-q^i). 
\]

\subsection{Characters of 
simple modules over $\cW_k(\mf{sl}_n)$}
Let $\cW_k(\mf{sl}_n)$ be the simple principal $\cW$-algebra
associated with $\mf{sl}_n$ at level $k$.
We consider the special case $k+n=\frac{n+p}{n+1}$ (for us $p=3$).
Then \cite{FKW,ArIII,ArV},
\begin{align*}
\widehat{P}_+^{p}\isomap  \{\text{simple $\cW^k(\mf{sl}_n)$-modules}\},
\quad \lam\mapsto 
\mathbb{L}_{\lam}:=
H^0_{DS,f_{prin}}(L(\lam+(k-p)\Lam_0)),
\end{align*}
where $\widehat{P}_+^p$ is the set of dominant integrable  weight of $\fing=\mf{sl}_n$
level $p$,
and
\begin{align*}
 \on{ch} \mathbb{L}_{\lam}=\frac{1}{\eta(\tau)^{n-1}}\sum_{w\in \widehat{W}}\epsilon(w)
q^{\frac{(n+p)(n+1)}{2}\left\vert\frac{w(\lam+\rho)}{n+p}-\frac{\rho}{n+1}\right\vert^2}.
\end{align*}
Here $\widehat{W}$ is the affine Weyl group of $\widehat{\mf{sl}}_n$.

\subsection{Lattice VOA characters}

We also need the characters of the lattice VOA $V_L$, they are just quotients of theta and eta functions:
\[
\ch[V_{L+{\frac{s}{\sqrt{6\ell}}}}](\tau, u) = \frac{1}{\eta(\tau)}\sum_{n\in \Z} q^{\frac{1}{2}\left( \sqrt{6\ell}n +\frac{s}{\sqrt{6\ell}}\right)^2}z^{3\ell\left(n +\frac{s}{3\ell}\right)} =:\frac{\theta_{s}(\tau, u)}{\eta(\tau)}.
\]

\subsection{Decomposing the character of $\cW_\ell$}
We can now put everything together to decompose the character of the VOA $\cW_\ell$.
\begin{thm}
The character of the simple current $U^s$ of the type $A$ $\cW$-algebra $\cV_\ell$ is in terms of the character of the simple Bershadsky-Polyakov algebra $\cW_\ell$
\[
\ch[\mathbb{L}_{3\Lam_s}](\tau) = \frac{1}{2\ell} \frac{\eta(\tau)}{\theta_{s}(\tau, 0)} \sum_{t=0}^{2\ell-1} e^{-2\pi i ts} \ch[\cW_\ell]\left(\tau, \frac{t}{3\ell}\right). 
\]
\end{thm}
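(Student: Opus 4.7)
My plan is to derive the identity by taking graded characters of the module decomposition
\[
\cW_\ell \;\cong\; \bigoplus_{s=0}^{2\ell-1}\mathbb{L}_{3\Lam_s}\otimes V_{L+3s/\sqrt{6\ell}}
\]
established in Theorem~\ref{thm:typeA}, and then inverting the resulting linear system via a discrete Fourier transform. Since $\cV_\ell$ lies in the commutant $\cC_\ell$ of $\cH$, the operator $J_0$ acts trivially on each $\mathbb{L}_{3\Lam_s}$-factor, so $z^{J_0}$ is carried entirely by the lattice tensor factor. Computing $\on{tr}_{\cW_\ell}(z^{J_0}q^{L_0-c/24})$ both directly and summand-by-summand therefore yields
\[
\ch[\cW_\ell](\tau,u) \;=\; \sum_{s=0}^{2\ell-1}\ch[\mathbb{L}_{3\Lam_s}](\tau)\cdot \ch[V_{L+3s/\sqrt{6\ell}}](\tau,u),
\]
a linear system of size $2\ell$ for the unknown functions $\ch[\mathbb{L}_{3\Lam_s}](\tau)$, which depend only on $\tau$.

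Next, I would specialize $u$ at the $2\ell$ torsion points $u=t/(3\ell)$ for $t=0,1,\dots,2\ell-1$. Each lattice summand $V_{L+3s/\sqrt{6\ell}}$ has $J_0$-spectrum concentrated in a single coset of a fixed sublattice of $\Z$—this is where the order-$2\ell$ simple-current structure enters—so at $u=t/(3\ell)$ the theta series defining the lattice character factors as an appropriate $2\ell$th root of unity times the constant-term evaluation $\theta_s(\tau,0)/\eta(\tau)$, because the $n$-dependent contribution to the theta summation trivializes. The resulting coefficient matrix is, up to the diagonal factor $\theta_s(\tau,0)/\eta(\tau)$, the character table of the cyclic group $\Z/2\ell\Z$ generated by the simple current $\mathbb{L}_{3\Lam_1}$, and is therefore unitarily invertible.

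The final step is pure Fourier inversion on $\Z/2\ell\Z$: I multiply both sides of the specialized decomposition by the conjugate character and sum over $t=0,\dots,2\ell-1$; orthogonality of characters of $\Z/2\ell\Z$ isolates the $s$-summand on the right, and dividing by the normalization $2\ell\cdot \theta_s(\tau,0)/\eta(\tau)$ yields the stated identity. The only substantive computation is the phase evaluation in the second paragraph: one must track the normalizations of the Heisenberg generator $J$, of the lattice $L=\sqrt{6\ell}\Z$, and of the $J_0$-weights on $V_{L+3s/\sqrt{6\ell}}$ carefully enough to identify the specializations $u=t/(3\ell)$ exactly with characters of the cyclic simple-current group of order $2\ell$. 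Once that phase is pinned down, everything else is routine finite Fourier analysis.
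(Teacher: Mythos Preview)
Your proposal is correct and follows essentially the same route as the paper: both start from the character decomposition $\ch[\cW_\ell](\tau,u)=\sum_{s'}\ch[\mathbb{L}_{3\Lam_{s'}}](\tau)\,\theta_{s'}(\tau,u)/\eta(\tau)$ coming from \eqref{eq:decomposition} and Theorem~\ref{thm:typeA}, and then invert via the theta shift relation $\theta_{s'}(\tau,u+\tfrac{t}{3\ell})=(\text{root of unity})\cdot\theta_{s'}(\tau,u)$ together with orthogonality of characters of $\Z/2\ell\Z$. The only cosmetic difference is that the paper writes the orthogonality relation for arbitrary $u$ and then sets $u=0$, whereas you specialize first; the substantive content (the phase computation you flag in your last paragraph) is identical.
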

\begin{proof}
By Theorem \ref{thm:typeA} together with equation \eqref{eq:decomposition} we have that
\[
\ch[\cW_\ell]\left(\tau, u\right)= \sum_{s=0}^{2\ell-1} \ch[\mathbb{L}_{3\Lam_s}](\tau) \ch[V_{L+{\frac{s}{\sqrt{6\ell}}}}](\tau, u).
\]
The theorem now follows directly by using 
$e^{-2\pi i ts}\theta_{s'}(\tau, u+\frac{t}{3\ell}) = e^{-2\pi i t(s-s')}\theta_{s'}(\tau, u)$
which implies
\[
\frac{1}{2\ell}\sum_{t=0}^{2\ell-1} e^{-2\pi i ts}\theta_{s'}(\tau, u+\frac{t}{3\ell})
= \frac{1}{2\ell}\sum_{t=0}^{2\ell-1} e^{-2\pi i t(s-s')}\theta_{s;}(\tau, u) 
= \delta_{s, s'}\theta_{s}(\tau, u).
\]
\end{proof}
Plugging in the explicit form of the characters, we get the identities of different lattice theta functions:
\begin{cor}
For $s=0, \dots, 2\ell-1$ the following identity holds
\[
\sum_{w\in \widehat{W}}\epsilon(w)
q^{\frac{(n+p)(n+1)}{2}\left\vert\frac{w(3\Lam_s+\rho)}{n+p}-\frac{\rho}{n+1}\right\vert^2}=\frac{1}{2\ell} \frac{\eta(\tau)^{n-1}}{\theta_{s}(\tau, 0)} \sum_{t=0}^{2\ell-1} \frac{e^{-2\pi i ts}}{\vartheta\left(\tau, \frac{t}{3\ell}\right)}\left(\left.\sum\limits_{w\in W\ltimes 2 Q^{\vee}}e^{w\circ
 k\Lam_0}
\right)\right\vert_{\substack{e^{-\delta}\ra q,\\
 e^{\alpha_1}\ra e^{-\frac{2\pi i t}{3\ell}}q^{-1/2},\\
e^{\alpha_2}\ra e^{\frac{2\pi i t}{3\ell}}q^{-1/2}}}
\]
where on the left hand side we have the affine Weyl group of $\gs\gl_n$ while on the right hand side it is the Weyl group and coroot lattice of $\gs\gl_3$. 
\end{cor}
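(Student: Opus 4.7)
The strategy is entirely straightforward: this identity is obtained by substituting the explicit closed-form character formulas recorded in Sections 8.1 and 8.2 into the character decomposition theorem established immediately above. No new representation-theoretic input is required.

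First, I would multiply the decomposition identity
\[
\ch[\mathbb{L}_{3\Lam_s}](\tau) = \frac{1}{2\ell}\,\frac{\eta(\tau)}{\theta_{s}(\tau,0)}\sum_{t=0}^{2\ell-1} e^{-2\pi i ts}\,\ch[\cW_\ell]\!\left(\tau,\frac{t}{3\ell}\right)
\]
through by $\eta(\tau)^{n-1}$ (with $n=2\ell$). On the left I would then invoke the Frenkel--Kac--Wakimoto formula from Section 8.2 with $\lam = 3\Lam_s$ and $p=3$, which reads
\[
\eta(\tau)^{n-1}\,\ch[\mathbb{L}_{3\Lam_s}](\tau) = \sum_{w\in\widehat{W}}\epsilon(w)\,q^{\frac{(n+p)(n+1)}{2}\left|\frac{w(3\Lam_s+\rho)}{n+p}-\frac{\rho}{n+1}\right|^{2}},
\]
producing exactly the left-hand side of the desired identity.

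Second, on the right I would insert the Bershadsky--Polyakov character formula from Section 8.1, namely
\[
\ch \cW_\ell(\tau,u) = \frac{1}{\eta(\tau)\,\vartheta(\tau,u)}\left.\left(\sum_{w\in W\ltimes 2Q^{\vee}} e^{w\circ k\Lam_0}\right)\right|_{\substack{e^{-\delta}\to q,\\ e^{\alpha_1}\to z^{-1}q^{-1/2},\\ e^{\alpha_2}\to zq^{-1/2}}},
\]
evaluated at $u = t/(3\ell)$. The single factor of $\eta(\tau)$ in the numerator of the decomposition identity cancels against the $\eta(\tau)$ in the denominator of $\ch \cW_\ell$, leaving precisely $\eta(\tau)^{n-1}/\theta_{s}(\tau,0)$ in front of the sum over $t$, together with the factor $1/\vartheta(\tau,t/(3\ell))$ inside each summand.

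The only bookkeeping point — and the only place one could conceivably slip — is the conversion $z = e^{2\pi i u}$, which at $u = t/(3\ell)$ gives $z = e^{2\pi i t/(3\ell)}$, so the substitutions $e^{\alpha_1}\to z^{-1}q^{-1/2}$ and $e^{\alpha_2}\to z q^{-1/2}$ become $e^{\alpha_1}\to e^{-2\pi i t/(3\ell)}q^{-1/2}$ and $e^{\alpha_2}\to e^{2\pi i t/(3\ell)}q^{-1/2}$, matching the substitutions in the statement. Combining the two substitutions yields the identity verbatim. There is no hard step; the work is purely notational, and the main obstacle is simply keeping the normalizations of $\vartheta$, $\theta_s$, and the exponentiation convention for $u$ consistent between Sections 8.1, 8.3, and the statement of the corollary.
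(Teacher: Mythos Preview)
Your proposal is correct and matches the paper's approach exactly: the paper states the corollary immediately after the theorem with only the remark ``Plugging in the explicit form of the characters, we get the identities of different lattice theta functions,'' and your argument carries out precisely this substitution, including the correct handling of the $z=e(u)$ convention.
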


\section{A family of super $\cW$-algebras}\label{sect:newvsa}

We consider the tensor product $\cW^{\ell} \otimes \cE$, where $\cE$ is the rank one $bc$-system, or equivalently the lattice vertex algebra $V_{\mathbb{Z}}$. It has odd generators $b,c$ satisfying $$b(z) c(w) \sim (z-w)^{-1},$$ a Heisenberg element $J^{\cE} = -:bc:$ and a Virasoro element $T^{\cE} = -:b\partial c: + :(\partial b) c:$ of central charge $1$, under which $b,c$ are primary of weight $1/2$.

Let $J^{\text{diag}} = J + J^{\cE} \in \cW^{\ell} \otimes \cE$ denote the diagonal Heisenberg element, which satisfies $$J^{\text{diag}}(z) J^{\text{diag}}(w) \sim \frac{3+2\ell}{3} (z-w)^{-2}.$$

We will be interested in the commutant $\text{Com}( J^{\text{diag}}, \cW^{\ell} \otimes \cE)$. As with our study of $\cC^{\ell}$, we begin by describing $(\cW^{\ell} \otimes \cE)^{U(1)}$ where the action of $U(1)$ is generated by the zero mode of $J^{\text{diag}}$. It is easy to see that $(\cW^{\ell} \otimes \cE)^{U(1)}$ has the following strong generating set $$\{T, J, J^{\cE}, U_{0,n}, \phi^+_{0,n}, \phi^-_{0,n}|\  n\geq 0\},$$ where $\phi^+_{0,n} = \ :b \partial^n G^+ :$ and $\phi^-_{0,n} = \ :c \partial^n G^- :$. 

Observe that $\phi^{\pm}_{0,1} =  \pm : \phi^{\pm}_{0,0}J^{\cE}:$, and $\phi^{\pm}_{0,n} = \pm : \phi^{\pm}_{0,n-1} J^{\cE}:$, for $n\geq 0,$ so the generators $\{\phi^{\pm}_{0,n}|\ n>0\}$ are not necessary. From the previous relations, we know that $\{U_{0,n}|\ n\geq 5\}$ are not necessary as long as $\ell \neq 0$ or $1/2$. In fact, we have the relation
$$U_{0,1} = -:\phi^+_{0,0} \phi^-_{0,0}:  - :J^{\cE} U_{0,0}: - \frac{3}{2}  :J J \partial J^{\cE}: + 
 \frac{3 + 2 \ell}{4}  :T \partial J^{\cE}:  + \frac{3-6\ell}{8} :(\partial J) \partial J^{\cE}: $$ $$ +
  \frac{1 - 2 \ell}{4}  :J \partial^2 J^{\cE}: + \frac{\ell - 2 \ell^2}{24}  \partial^3 J^{\cE} + \partial U_{0,0} +
 \frac{3}{2}  :J J J^{\cE} J^{\cE}: -
\frac{3-6\ell }{8} :(\partial J) J^{\cE} J^{\cE}: 
$$ $$- \frac{3-6\ell}{4} :J (\partial J^{\cE})J^{\cE}: -
 \frac{\ell - 2 \ell^2}{24}  :JJJJ: + \frac{\ell-2\ell^2}{4} :(\partial J^{\cE}) J^{\cE} J^{\cE}: - 
 \frac{\ell - 2 \ell^2}{6}  :(\partial^2 J^{\cE}) J^{\cE}: $$ $$- 
 \frac{\ell - 2 \ell^2}{8} :(\partial J^{\cE}) \partial J^{\cE}:  + \frac{1-2\ell}{4}  :J J^{\cE} J^{\cE} J^{\cE}:  - \frac{3+2\ell}{4} :T J^{\cE} J^{\cE}:,$$ so for all $\ell \neq -3/2$, including $\ell = 0$ and $\ell = 1/2$, $U_{0,1}$ is not necessary. We have similar relations 
 $$U_{0,n+1} = (-1)^n :(\phi^+ (J^{\cE}( \cdots (J^{\cE} (\phi^-))\cdots ))):  - :J^{\cE} U_{0,n} + \cdots$$ for all $n\geq 1$, where the remaining terms depend only on $J, T, J^{\cE}$ and their derivatives, and have coefficients given by polynomials in $\ell$. This proves
 
\begin{lemma} \label{super-U(1)} For all $\ell \neq -3/2$, $(\cW^{\ell} \otimes \cE)^{U(1)}$ has a minimal strong generating set $$\{\phi^{\pm}_{0,0}, U_{0,0}, T,J,J^{\cE}\}.$$
\end{lemma}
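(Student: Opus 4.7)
The plan is to proceed in three steps, starting from the strong generating set $\{T,J,J^{\cE},U_{0,n},\phi^{\pm}_{0,n}\mid n\geq 0\}$ exhibited in the paragraph preceding the lemma. First, I would eliminate $\phi^{\pm}_{0,n}$ for $n\geq 1$ using the recursive identities $\phi^{\pm}_{0,n}=\pm:\phi^{\pm}_{0,n-1}J^{\cE}:$ stated there; these are routine OPE computations in $\cW^{\ell}\otimes\cE$, exploiting the fact that $G^{\pm}$ lies in a different tensor factor from $\cE$ and hence has trivial OPE with $b,c,J^{\cE}$, which reduces the verification to an identity in $\cE$ alone.

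Second, I would eliminate $U_{0,n}$ for $n\geq 1$ by induction on $n$, the base case $n=1$ being the explicit identity displayed in the excerpt. For the inductive step I would form the iterated normally ordered product $Y_n$ built from $\phi^+_{0,0}$, $n$ intermediate copies of $J^{\cE}$, and $\phi^-_{0,0}$, and expand it using \eqref{bp1}--\eqref{bp5} together with the $bc$-OPE. Organising by the weak filtration on $\cW^{\ell}\otimes\cE$ inherited from the one on $\cW^{\ell}$ (counting the number of $G^{\pm}$-factors), the filtration-two piece of $Y_n$ should collect into
$$Y_n = (-1)^n c_n\, U_{0,n+1} + d_n :J^{\cE}U_{0,n}: + R_n,$$
where $c_n\neq 0$ is a combinatorial coefficient coming only from the fermion contractions (hence independent of $\ell$), $d_n\in\{\pm 1\}$, and $R_n$ is a normally ordered polynomial in $\{T,J,J^{\cE},U_{0,0},\ldots,U_{0,n-1},\phi^{\pm}_{0,0}\}$ and their derivatives. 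Solving for $U_{0,n+1}$ and invoking the inductive hypothesis on $U_{0,n}$ then yields the required decoupling relation.

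Third, minimality follows from a direct monomial analysis in Zhu's $C_2$-algebra: under the free generation of $\cW^{\ell}$ by $J,T,G^{\pm}$ and of $\cE$ by $b,c$, the images of the five fields $T,J,J^{\cE},U_{0,0},\phi^{\pm}_{0,0}$ in $C((\cW^{\ell}\otimes\cE)^{U(1)})$ are the distinct monomials $T,J,-bc,G^+G^-,bG^+,cG^-$, and none of them lies in the subalgebra generated by the others, since any product of the remaining monomials carries an imbalance of $b,c,G^{\pm}$ counts that rules out producing the omitted generator.

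The main obstacle will be verifying the leading-term computation in the inductive step uniformly in $n$. The $bc$-Wick expansion of $Y_n$ produces many terms in which the surviving $G^+G^-$-pair acquires various derivative orders, and one must check that every contribution outside the stated leading piece either lies in the subalgebra generated by the earlier $U_{0,k}$'s and the elementary generators, or is a total derivative that can be absorbed. The filtration argument should reduce this to a purely combinatorial identity for $c_n$ involving only fermion contractions, with the $\ell$-dependent coefficients from \eqref{bp5} appearing entirely inside $R_n$ and hence harmless for the induction.
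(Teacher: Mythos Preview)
Your proposal is correct and follows essentially the same route as the paper: the argument in the text immediately preceding the lemma also eliminates $\phi^{\pm}_{0,n}$ for $n\geq 1$ via the recursive identity and then decouples each $U_{0,n+1}$ using exactly the iterated Wick product of $\phi^+$, copies of $J^{\cE}$, and $\phi^-$ that you describe. The only minor differences are that the paper asserts the sharper fact that the remainder depends only on $J,T,J^{\cE}$ and their derivatives (your weaker form of $R_n$, allowing lower $U_{0,k}$'s, already suffices for the induction), and that your explicit minimality check via the $C_2$-algebra is a reasonable addition the paper leaves implicit.
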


Let $\cD^{\ell}$ denote the commutant $\text{Com}( J^{\text{diag}}, \cW^{\ell} \otimes \cE)$. Note that $$(\cW^{\ell} \otimes \cE)^{U(1)} \cong \cH \otimes \cD^{\ell},$$ where $\cH$ is generated by $J^{\text{diag}}$. Also, $\cD^{\ell}$ has a Virasoro element 
$$T^{\cD} = T+ T^{\cE} - \frac{3}{2(3+2\ell)} : J^{\text{diag}} J^{\text{diag}}:$$ and a Heisenberg field $$J^{\cD} = J - \frac{2\ell}{3} J^{\cE}.$$ The odd weight $2$ elements $\phi^{\pm} = \phi^{\pm}_{0,0}$, as well as the even weight $3$ element $U^{\cD} = U^{\cC}_{0,0}$, are easily seen to lie in $\cD^{\ell}$ as well. As in the previous section, Lemma \ref{super-U(1)} implies the following result.
\begin{thm} $\cD^{\ell}$ has a minimal strong generating set 
$$\{J^{\cD}, T^{\cD}, \phi^{\pm}, U^{\cD}\}.$$ \end{thm}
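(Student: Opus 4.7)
The plan follows the template used for Theorem \ref{thm:generic-coset}, exploiting the tensor product decomposition $(\cW^{\ell}\otimes\cE)^{U(1)}\cong\cH\otimes\cD^{\ell}$ in conjunction with Lemma \ref{super-U(1)}. Since membership of $J^{\cD}, T^{\cD}, \phi^{\pm}, U^{\cD}$ in $\cD^{\ell}$ has already been established in the discussion preceding the theorem, the task reduces to showing that these five fields strongly generate $\cD^{\ell}$ and that the generating set is minimal.

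For strong generation, I would express each of the six strong generators $\{T, J, J^{\cE}, U_{0,0}, \phi^{\pm}_{0,0}\}$ of the orbifold furnished by Lemma \ref{super-U(1)} as a normally ordered polynomial in $J^{\text{diag}}$ together with $J^{\cD}, T^{\cD}, \phi^{\pm}, U^{\cD}$. The linear system
\[
J^{\text{diag}} = J + J^{\cE}, \qquad J^{\cD} = J - \tfrac{2\ell}{3} J^{\cE}
\]
is invertible for $\ell \neq -3/2$, producing $J$ and $J^{\cE}$ as linear combinations of $J^{\text{diag}}$ and $J^{\cD}$. The identity $T^{\cD} = T + T^{\cE} - \frac{3}{2(3+2\ell)}:J^{\text{diag}} J^{\text{diag}}:$ recovers $T + T^{\cE}$, and since $\cE \cong V_{\mathbb{Z}}$ is the rank one lattice vertex algebra whose stress-energy coincides with the Sugawara of its Heisenberg current, $T^{\cE}$ is a normally ordered polynomial in $J^{\cE}$, which allows $T$ to be isolated. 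The generators $\phi^{\pm}_{0,0}$ coincide literally with $\phi^{\pm}$, and $U_{0,0} = U^{\cD} - \omega_0$ where $\omega_0$ is the correction polynomial in $J$ and $T$ from Theorem \ref{thm:generic-coset}, which is already expressible in the enlarged set.

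Having rewritten every orbifold generator, the set $\{J^{\text{diag}}\} \cup \{J^{\cD}, T^{\cD}, \phi^{\pm}, U^{\cD}\}$ strongly generates $(\cW^{\ell}\otimes\cE)^{U(1)}$. Since $J^{\text{diag}}$ alone strongly generates the tensor factor $\cH$ while the other five elements lie in $\cD^{\ell}$, the tensor product structure forces these five to strongly generate $\cD^{\ell}$. For minimality, any proper strong generating subset of $\{J^{\cD}, T^{\cD}, \phi^{\pm}, U^{\cD}\}$ would, together with $J^{\text{diag}}$, yield a strong generating set of the orbifold with fewer than six elements, contradicting the minimality asserted in Lemma \ref{super-U(1)}.

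The only substantive technical point I expect is the Sugawara identification of $T^{\cE}$ with a polynomial in $J^{\cE}$, which is what allows $T$ itself rather than merely $T + T^{\cE}$ to be recovered from the proposed generators; this relies on the bosonization $\cE \cong V_{\mathbb{Z}}$. Everything else reduces to routine bookkeeping entirely parallel to the proof of Theorem \ref{thm:generic-coset}.
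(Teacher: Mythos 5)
Your proposal is correct and takes essentially the same route as the paper, whose proof is precisely the remark that Lemma \ref{super-U(1)} together with the decomposition $(\cW^{\ell}\otimes\cE)^{U(1)}\cong\cH\otimes\cD^{\ell}$ implies the result, ``as in the previous section'' on $\cC^{\ell}$. Your write-up merely makes explicit the bookkeeping (inverting the change of Heisenberg basis for $\ell\neq -3/2$, expressing $T^{\cE}$ as a polynomial in $J^{\cE}$ via $\cE\cong V_{\mathbb{Z}}$, and correcting $U_{0,0}$ by $\omega_0$) that the paper leaves implicit.
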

The full OPE algebra of $\cD^{\ell}$ is easy to compute from the explicit formulas for the generators, and it would be interesting to relate it to a super $\cW$-algebra associated to $\gs\gl(3|1)$. 

Next, we consider the simple vertex algebra $\cW_{\ell} \otimes \cE$, and we let $\cD_{\ell}$ denote the commutant $\text{Com}(J^{\text{diag}}, \cW_{\ell} \otimes \cE)$. By Corollary 2 of \cite{MiI}, $\cD_{\ell}$ is $C_2$-cofinite for all $\ell\in\Z_{>0}$.

\begin{thm}\label{thm:rationalsuper}
$\cD_\ell$ is rational for all positive integers $\ell$.
\end{thm}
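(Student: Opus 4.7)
The plan is to realize $\cD_\ell$ as a simple current extension of a rational, $C_2$-cofinite VOA, and then invoke Yamauchi's theorem \cite{Y} together with its super analog from \cite{CKL} to conclude. This mirrors the strategy used in Section \ref{sect:WtypeA} for the rationality of $\cW_\ell$ itself.

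First, I would identify an appropriate rank-two lattice subalgebra of $\cW_\ell \otimes \cE$ containing $\cH_{\text{diag}}$. By Theorem \ref{thm:latticesubvoa}, $\cW_\ell \supset V_L$ with $L=\sqrt{6\ell}\Z$, and $\cE=V_\Z$, so $V_{L\oplus\Z}$ sits in $\cW_\ell\otimes\cE$. Fix generators $\alpha,\beta$ of $L$ and $\Z$ with $\|\alpha\|^2=6\ell$ and $\|\beta\|^2=1$, so that $J$ corresponds to $\alpha/3$ and $J^{\cE}$ to $\beta$. Then $J^{\text{diag}}$ points along the primitive lattice vector $\gamma:=\alpha+3\beta$ (of norm $3(2\ell+3)$), and its orthogonal direction is spanned by the primitive vector $\delta:=\alpha-2\ell\beta$ (of norm $2\ell(2\ell+3)$, which is always even). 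Set $L_0:=\Z\delta$, so $V_{L_0}$ is a rational, $C_2$-cofinite VOA. A determinant computation gives $[L\oplus\Z:\Z\gamma\oplus L_0]=2\ell+3$, so $V_{L\oplus\Z}$ is a simple current extension of $V_{\Z\gamma}\otimes V_{L_0}$ by the cyclic group of order $2\ell+3$.

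Combining this with the decomposition $\cW_\ell\cong\bigoplus_s \mathbb{L}_{3\Lambda_s}\otimes V_{L+\lambda_s}$ from Theorem \ref{thm:typeA} and \eqref{eq:decomposition}, $\cW_\ell\otimes\cE$ is a simple current extension of $\cC_\ell\otimes V_{\Z\gamma}\otimes V_{L_0}$. Now $\cH_{\text{diag}}\subset V_{\Z\gamma}$ commutes with both $\cC_\ell$ and $V_{L_0}$, and has trivial commutant in $V_{\Z\gamma}$, so the Heisenberg commutant of the base equals $\cC_\ell\otimes V_{L_0}$. For each simple current summand $N^{(1)}\otimes V_{\Z\gamma+\mu_1}\otimes V_{L_0+\mu_3}$ of the extension, the commutant contribution is nonzero precisely when $\mu_1\equiv 0$ in $(\Z\gamma)^{*}/\Z\gamma$; such indices form a subgroup $H$ of the total simple current group. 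Hence $\cD_\ell$ is a simple current extension of $\cC_\ell\otimes V_{L_0}$ by $H$.

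Both $\cC_\ell$ (by Theorem \ref{thm:typeA} together with \cite{ArV}) and $V_{L_0}$ (as a positive-definite even lattice VOA) are rational and $C_2$-cofinite, so their tensor product is as well. Invoking \cite{Y}, in the super-graded form of \cite{CKL} to account for the parity inherited from $\cE$, yields rationality of $\cD_\ell$. The main obstacle to formalize is verifying that the admissible index set $H$ really is a subgroup and that the super-parity structure on $\cD_\ell$ inherited from $\cE$ matches the hypotheses of the cited rationality theorems; once those are in place, the conclusion follows directly.
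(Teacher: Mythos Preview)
Your approach is correct and reaches the same endpoint as the paper --- realizing $\cD_\ell$ as a simple current extension of $\cC_\ell\otimes V_{L_0}$ with $L_0=\sqrt{2\ell(2\ell+3)}\,\Z$ and then invoking \cite{Y} --- but the route is genuinely different. The paper does \emph{not} pass through the explicit decomposition \eqref{eq:decomposition} of $\cW_\ell$; instead it observes that $\cC_\ell\otimes V_{L_0}\subset\cD_\ell$, lets the discriminant group of $L_0$ act by automorphisms on $\cD_\ell$, applies Dong--Mason \cite{DM} to get a decomposition into simple modules over the orbifold, and then uses Miyamoto's flatness criterion (Proposition~20 of \cite{MiII}) together with $C_1$-cofiniteness to conclude that each summand is a simple current. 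Only then is \cite{Y} invoked.

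Your argument trades the Dong--Mason/Miyamoto machinery for the explicit lattice computation: once you split $L\oplus\Z$ orthogonally along $\gamma$ and $\delta$, feeding \eqref{eq:decomposition} through the Heisenberg commutant yields directly
\[
\cD_\ell \;=\; \bigoplus_{s=0}^{2\ell-1}\mathbb{L}_{3\Lambda_s}\otimes V_{L_0+\frac{s}{2\ell}\delta},
\]
so the simple currents are identified concretely and the group $H\cong\Z/2\ell$ is visibly a subgroup (this resolves the ``main obstacle'' you flag). The advantage of your route is that it names the summands and avoids the appeal to \cite{MiII}; the advantage of the paper's route is that it does not require redoing the orthogonal lattice decomposition or tracking cosets through the tensor product. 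Both arguments ultimately need the super version of \cite{Y}, and both handle this the same way.
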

\begin{proof}
$\cD_\ell$ is an extension of  $\cC_\ell\otimes V_{\sqrt{2\ell(2\ell+3)}\mathbb Z}$. The lattice vertex algebra is rational and for $\ell=1, 2$ we also know that $\cC_\ell$ is rational. The discriminant $\mathbb Z/2\ell(2\ell+3)\mathbb Z$ of the lattice $\sqrt{2\ell(2\ell+3)}\mathbb Z$ acts on $\cD_\ell$ as automorphism subgroup. The orbifold is $\cC_\ell\otimes V_{\sqrt{2\ell(2\ell+3)}\mathbb Z}$ and as a module for the orbifold
\[
\cD_\ell= \bigoplus_{n=0}^{2\ell(2\ell+3)-1} M_\ell,
\]
where each $M_\ell$ is a simple $\cC_\ell\otimes V_{\sqrt{2\ell(2\ell+3)}\mathbb Z}$-module \cite{DM}. It is in fact also $C_1$-cofinite as the orbifold is $C_2$-cofinite, hence Proposition 20 of \cite{MiII}
implies that each $M_\ell$ is a simple current. We thus have a simple current extension of a rational, $C_2$-cofinite vertex algebra of CFT-type and hence by \cite{Y} the extension is also rational.
\end{proof}

\begin{remark} $\cD_{\ell}$ is an example of a rational, $C_2$-cofinite vertex superalgebra whose conformal weight grading is by $\mathbb{N}$, not $\frac{1}{2}\mathbb{N}$.
\end{remark}

\section{Appendix}
Here we give the explicit relation of minimal weight $8$ among the generators of $(\cW^{\ell})^{U(1)}$.

$$ :U_{0,0} U_{1,1}: - :U_{0,1} U_{1,0}:  -\frac{3}{4}
 :(\partial^4 J) J J J: + 3 :(\partial^3 J)(\partial J) J J: - 
 \frac{9}{4} :(\partial^2 J)(\partial^2 J) J J:  $$ $$+ 18 :(\partial^2 J)(\partial J)(\partial J) J:  - 
 \frac{3}{32} (-1 + 2 \ell) :(\partial^5 J) J J:  - 
 \frac{3}{16} (-1 + 2 \ell) :(\partial^4 J)(\partial J) J: $$ $$- \frac{3}{4} (1 - 2 \ell) :(\partial^3 J) (\partial J) \partial J:  + 
 \frac{45}{16} (-1 + 2 \ell) :(\partial^2 J) (\partial^2 J) \partial J:  + 
 \frac{1}{16} (3 + 2 \ell) :(\partial^4 T) J J:  $$ $$- (\frac{3}{2} + \ell) :(\partial^3 T) (\partial J) J:  - (3/2 + \ell) :(\partial T)(\partial^3 J) J:  -  \frac{3}{2} (3 + 2 \ell) :(\partial T)(\partial^2 J) \partial J:  $$ $$+ 
  \frac{1}{8} (3 + 2 \ell) :T \partial^4 J) J:  +  \frac{1}{2} (3 + 2 \ell) :T \partial^3 J) \partial J: + 
  \frac{3}{8} (3 + 2 \ell) :T (\partial^2 J) \partial^2 J: $$ $$ - 
  \frac{27 - 36 \ell + 28 \ell^2 }{960} :(\partial^6 J) J:  - 
  \frac{93 - 84 \ell + 52 \ell^2}{640}  :(\partial^5 J) \partial J:  - 
  \frac{3 + \ell + 2 \ell^2 }{8} :(\partial^4 J) \partial^2 J:  $$ $$- 
  \frac{3 + \ell + 2 \ell^2}{12}  :(\partial^3 J) \partial^3 J:  - 
  \frac{3 - 4 \ell - 4 \ell^2}{160}  :(\partial^5 T) J: + 
  \frac{3 - 4 \ell - 4 \ell^2}{64}  :(\partial^4 T) \partial J: $$ $$ + 
  \frac{3 - 4 \ell - 4 \ell^2}{16}  :(\partial^3 T) \partial^2 J:  - 
  \frac{3 - 4 \ell - 4 \ell^2}{320}  :T \partial^5 J:  - 
  \frac{9 + 12 \ell + 4 \ell^2}{96}  :(\partial^4 T) T: $$ $$ + 
  \frac{9 + 12 \ell + 4 \ell^2}{24}  :(\partial^3 T) \partial T:  + \frac{3 - 13 \ell + 16 \ell^2 - 4 \ell^3}{2240} \partial^7J- \frac{3 \ell - 4 \ell^2 - 4 \ell^3}{1440} \partial^6 T$$ $$ - :(\partial^3 J) J U_{0,0}:  - 3 :(\partial^2 J)(\partial J) U_{0,0}: + 
 3 :(\partial^2 J) J U_{0,1}: + 3 :(\partial J)(\partial J) U_{0,1}: $$ $$ + 3 :(\partial J) J \partial^2 U_{0,0}:  - 
 6 :(\partial J) J \partial U_{0,1}:  + 3 :(\partial J) J U_{0,2}:  + :J J \partial^3 U_{0,0}: - 
 \frac{3}{2} :J J \partial^2 U_{0,1}: $$ $$+ \frac{3}{2} :J J \partial U_{0,2}: - 
 :J J U_{0,3}:  - \bigg(\frac{7}{8} + \frac{\ell}{4} \bigg) :(\partial^3 J) U_{0,1}: - 
 \frac{3}{8} (3 + 2 \ell) :(\partial^2 J) \partial^2 U_{0,0}:  $$ $$- \bigg( \frac{9}{4} + \frac{3 \ell}{2} \bigg) :(\partial^2 J)\partial U_{0,1}:  
 + \bigg(\frac{27}{8} + \frac{9 \ell}{4} \bigg) :(\partial^2 J) U_{0,2}:  - 
 \frac{3}{4} (3 + 2 \ell) :( \partial J) \partial^3 U_{0,0}: $$ $$+ \bigg(\frac{15}{8} + \frac{9 \ell}{4} \bigg) :(\partial J)\partial^2 U_{0,1}: + \bigg( \frac{3}{8} - \frac{3 \ell}{4} \bigg) :(\partial J) \partial U_{0,2}:  - 
 \frac{3}{8} (3 + 2 \ell) :J \partial^4 U_{0,0}:  $$ $$ + (2 + 2 \ell) :J \partial^3 U_{0,1}: - \bigg(\frac{3}{4} + \frac{3 \ell}{2} \bigg) :J \partial^2 U_{0,2}:  - \bigg(\frac{1}{4} - \frac{\ell}{2} \bigg) :J \partial U_{0,3}: $$ $$+ \bigg( \frac{1}{8} - \frac{\ell}{4} \bigg) :J U_{0,4}: + 
 \frac{1}{12} (3 + 2 \ell) :(\partial^3 T) U_{0,0}:  - \bigg( \frac{3}{4} + \frac{\ell}{2} \bigg) :(\partial^2 T) U_{0,1}: $$ $$- 
 \frac{1}{4} (3 + 2 \ell) :(\partial T) \partial^2  U_{0,0}: + \bigg(\frac{3}{2} + \ell\bigg) :(\partial T) \partial U_{0,1}: - \bigg(\frac{3}{4} + \frac{\ell}{2} \bigg) :(\partial T) U_{0,2}:  $$ $$- \frac{1}{6} (3 + 2 \ell) :T \partial^3 U_{0,0}:  + \bigg( \frac{3}{4} + \frac{\ell}{2} \bigg) :T \partial^2 U_{0,1}:  - \bigg( \frac{3}{4} + \frac{\ell}{2} \bigg) :T \partial U_{0,2}:  $$ $$+ \bigg(\frac{1}{2} + \frac{\ell}{3} \bigg) :T U_{0,3}:  + 
\bigg( \frac{45 + 58 \ell + 24 \ell^2}{120}\bigg)  \partial^5 U_{0,0} - \bigg(\frac{8 + 13 \ell+ 6 \ell^2}{8}\bigg) \partial^4 U_{0,1} $$ $$+ \bigg( \frac{7 + 14 \ell + 8 \ell^2}{8}\bigg) \partial^3 U_{0,2} - \bigg( \frac{3 + 7 \ell + 6 \ell^2}{12}\bigg)  \partial^2 U_{0,3} -\bigg(\frac{\ell (1 - 2 \ell)}{24}\bigg) \partial U_{0,4} +\bigg(\frac{\ell (1 - 2 \ell)}{60}\bigg) U_{0,5} .$$

\end{document}